\theoremstyle{plain}
\newtheorem{thm}{Theorem}[section]
\newtheorem{lemma}[thm]{Lemma}
\newtheorem{prop}[thm]{Proposition}
\newtheorem{cor}[thm]{Corollary}
\newtheorem*{thm*}{Theorem}
\newtheorem*{lemma*}{Lemma}
\newtheorem*{prop*}{Proposition}
\newtheorem*{cor*}{Corollary}
\newtheorem*{conj*}{Conjecture}
\newtheorem*{rep@theorem}{\rep@title}
\newcommand{\newreptheorem}[2]{%
\newenvironment{rep#1}[1]{%
 \def\rep@title{#2 \ref{##1}}%
 \begin{rep@theorem}}%
 {\end{rep@theorem}}}
\theoremstyle{definition}
\newtheorem{defn}[thm]{Definition}
\newtheorem{ex}[thm]{Example}
\newtheorem{alg}[thm]{Algorithm}
\theoremstyle{remark}
\newtheorem*{rmk}{Remark}
\newcommand{\nn}{\mathbb{N}}
\newcommand{\rr}{\mathbb{R}}
\newcommand{\cc}{\mathbb{C}}
\newcommand{\cm}{\mathcal{M}}
\newcommand{\Con}{\mathrm{Con}}
\newcommand{\ind}{\mbox{$\perp \kern-5.5pt \perp$}}
\newcommand{\supp}{\mathrm{supp}}
\newcommand{\spou}{\mathrm{Sp}}
\newcommand{\rank}{\mathrm{rank}}
\newcommand{\imps}{\;\rotatebox[origin = c]{270}{\ind}_{P}\;}
\newcommand{\impa}{\;\rotatebox[origin = c]{270}{\mbox{$\perp$}}_{A}\;}
\newcommand{\cl}{\mathrm{cl}}
\begin{document}
\title{Marginal Independence and Partial Set Partitions}
\author{Francisco Ponce-Carri\'on and Seth Sullivant}
\address{Department of Mathematics \\
North Carolina State University, Raleigh, NC, 27695}
\email{fmponcec@ncsu.edu}
\email{smsulli2@ncsu.edu}
\keywords{marginal independence, partial set partitions, toric varieties, graphical models}

\maketitle
\begin{abstract}
We establish a bijection between marginal independence models on $n$ random variables
and split closed order ideals in the poset of partial set partitions.  
We also establish that every discrete marginal independence model is toric
in cdf coordinates.  This generalizes results of Boege, Petrov\'ic, and Sturmfels \cite{boege}
and Drton and Richardson \cite{drton}, and provides a unified framework for
discussing marginal independence models. Additionally, we provide an axiomatic characterization of marginal independence and we show that our set of axioms are sound and complete in the set of probability distributions. This follows the work of Geiger, Paz and Pearl \cite{geiger} who provided an analogous characterization of independence for statements involving 2 sets of random variables.
\end{abstract}

\section{Introduction}

Marginal independence models are often studied by associating a combinatorial structure to describe the independence relations between random variables. 
For instance, Drton and Richardson \cite{drton} used bidirected graphs to
develop a family of marginal independence models for discrete random variables,
and Boege, Petrov\'ic, and Sturmfels \cite{boege} used simplicial complexes.  
The marginal independence models of \cite{drton}  fit more broadly
into the class of graphical models  \cite{lauritzen, sullivant}. 
Each of these structures describes a very different family of models, 
with very little overlap between them.
In both cases it had been shown that when we restrict to discrete random variables, 
we obtain toric models after a linear change of coordinates. 
However, there are collections of marginal independence statements that cannot be expressed
either as graphs or simplicial complexes. This motivates the proposal of a more general combinatorial structure which we use in our treatment of marginal independence.

In this paper, we study marginal independence using the poset of partial set partitions, 
$P\Pi_{n}$. In Section \ref{sec:poset}, we begin by studying the combinatorial properties of $P\Pi_n$. We show its interplay with probability distributions on $n$ random variables. In this process, the class of split closed order ideals arises and we
give a combinatorial description of marginal independence models 
on $n$ random variables by showing a correspondence between the 
models and split closed order ideals of $P\Pi_{n,2}$, the subposet of
$P \Pi_n$ where every set partition has $2$ or more blocks. Explicitly, we have:

\begin{thm*}
	Marginal independence models on $n$ random variables are in bijective correspondence to split closed order ideals  in $P\Pi_{n,2}$.
\end{thm*}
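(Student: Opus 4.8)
The plan is to realize the bijection as a Galois correspondence and to split the content into a soundness part and a completeness part. To each partial set partition $\pi = \{A_1, \dots, A_k\} \in P\Pi_{n,2}$ I attach the marginal independence statement that $X_{A_1}, \dots, X_{A_k}$ are mutually independent; to a collection $S \subseteq P\Pi_{n,2}$ I attach the model $\Psi(S) = \{P : P \models \sigma \text{ for all } \sigma \in S\}$; and to a model $\mathcal{M}$ I attach $\Phi(\mathcal{M}) = \{\pi \in P\Pi_{n,2} : P \models \pi \text{ for all } P \in \mathcal{M}\}$. The pair $(\Phi,\Psi)$ is an antitone Galois connection, so models correspond bijectively to the $\Phi$-closed subsets of $P\Pi_{n,2}$, namely the sets of the form $\Phi(\mathcal{M})$. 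It therefore suffices to prove that these closed subsets are exactly the split closed order ideals.

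First I would establish the \emph{sound} direction: $\Phi(\mathcal{M})$ is always a split closed order ideal. Downward closure reflects the two elementary probabilistic implications underlying the order on $P\Pi_n$: marginalizing a mutual independence statement onto subsets of its blocks, and amalgamating blocks by taking unions, both preserve mutual independence. Split closure reflects the composition rule: if $X_D \ind X_{E_2} \ind \cdots \ind X_{E_\ell}$ holds and $D = D_1 \sqcup \cdots \sqcup D_m$ with $X_{D_1} \ind \cdots \ind X_{D_m}$, then substituting the factorization of $P(x_D)$ into the factorization of $P(x_D, x_{E_2}, \dots, x_{E_\ell})$ yields $X_{D_1} \ind \cdots \ind X_{D_m} \ind X_{E_2} \ind \cdots \ind X_{E_\ell}$. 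Each is a one-line computation on mass functions, and together they say precisely that $\Phi(\mathcal{M})$ is closed downward and under splits.

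The substance is the \emph{completeness} direction: every split closed order ideal $\mathcal{I}$ satisfies $\Phi(\Psi(\mathcal{I})) = \mathcal{I}$. Since $\mathcal{I} \subseteq \Phi(\Psi(\mathcal{I}))$ is automatic, I would reduce this to realizing non-members one at a time: for each $\pi \notin \mathcal{I}$ I must exhibit a probability distribution $P \in \Psi(\mathcal{I})$ keeping the blocks of $\pi$ mutually dependent. This is where split closure is indispensable: because $\mathcal{I}$ is split closed, its members impose only mutually compatible factorizations, and one can construct $P$ as a suitable product or mixture that factors exactly as the members of $\mathcal{I}$ demand and no further. Two constructions are available, a direct one with binary variables carrying generic correlations along the dependent blocks of $\pi$, and, via the companion result that discrete marginal independence models are toric in cdf coordinates, a generic positive point of the associated toric variety.

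The hard part will be exactly this no-extra-independence argument: proving that split closure is not only necessary but sufficient to forbid every accidental independence. Concretely, one must show that the only way a generic $P \in \Psi(\mathcal{I})$ could satisfy some $\pi$ is if $\pi$ were forced from members of $\mathcal{I}$ by the order relations and the composition rule, i.e.\ if $\pi$ lay in the split closure of $\mathcal{I}$ and hence in $\mathcal{I}$ itself. In the toric picture this becomes the statement that the binomial attached to $\pi$ lies in the toric ideal generated by the binomials of $\mathcal{I}$ if and only if $\pi \in \mathcal{I}$, so that completeness becomes a lattice computation whose combinatorial content is precisely split closure. I would therefore concentrate the effort on this equivalence; once it is in hand, $\Phi$ and $\Psi$ are mutually inverse between marginal independence models and split closed order ideals of $P\Pi_{n,2}$, which is the assertion of the theorem.
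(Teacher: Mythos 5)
Your proposal follows essentially the same route as the paper. The soundness half (every model's set of valid statements is downward closed and split closed) is the content of Lemma \ref{lem:marginalimpl}, Proposition \ref{prop:marginalimplication}, and Proposition \ref{prop:iffsplit}; the completeness half (for each $\pi$ outside a split closed order ideal $\mathcal{I}$, produce a distribution in $\mathcal{M}_{\mathcal{I}}$ violating $\pi$) is exactly how the paper finishes, via generic points of the toric variety in cdf coordinates, carried out with binary random variables (Proposition \ref{prop:modexist}). The Galois-connection packaging is tidy but adds no mathematical content beyond the paper's two directions.

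The genuine gap is that you stop precisely where the work lies: the claim you call ``a lattice computation whose combinatorial content is precisely split closure'' is asserted, not proved, and it is the heart of the theorem. To close it the paper needs three things, none of which your sketch supplies. First, primeness of $J_{\mathcal{I}}$: split closedness gives each disconnected set a unique decomposition into maximal connected sets (Proposition \ref{prop:decomp}), so the generators $f_i = q_i - q_{j^{(1)}}\cdots q_{j^{(k)}}$ have distinct variables $q_i$ as leading terms under a support-refined lex order; they form a Gr\"obner basis whose initial ideal is generated by variables, hence $J_{\mathcal{I}}$ is prime --- without irreducibility, ``a generic point of the variety'' has no force, since the variety could have a component inside $V(g_\pi)$. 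Second, the separation itself: for a \emph{minimal} $\pi \notin \mathcal{I}$ the ground set $|\pi|$ is connected with respect to $\mathcal{I}$, so the leading term $q_{|\pi|}$ of the binomial $g_\pi$ is a variable not occurring among the leading terms of $J_{\mathcal{I}}$; hence $\dim\bigl(J_{\mathcal{I}} + \langle g_\pi\rangle\bigr) < \dim J_{\mathcal{I}}$ and $V(J_{\mathcal{I}}) \not\subseteq V(g_\pi)$. This is your ``ideal membership iff $\pi \in \mathcal{I}$'' statement, but note it is proved by an initial-ideal dimension count, not by a lattice argument. Third, positivity, which you dispatch with the single word ``positive'': the separating point must be an honest probability distribution, and the paper gets this from smoothness of $V(J_{\mathcal{I}})$ together with the fact that the uniform distribution lies on $V(J_{\mathcal{I}})$ in the interior of the simplex, so $V(J_{\mathcal{I}}) \cap \Delta_{2^n}$ has full dimension and cannot be covered by the finitely many lower-dimensional subvarieties $V(g_\pi)$. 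A final small point: your soundness computations are phrased in terms of mass functions, but the theorem concerns arbitrary random variables, so they must be run on cdfs as in the paper; discreteness is needed (and suffices) only for the completeness direction.
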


In Section \ref{sec:axiom}, we give an axiomatic characterization of marginal independence, following the results by Geiger, Paz and Pearl \cite{geiger}. We provide a set of axioms on the poset of partial set partitions and prove that they are complete and sound in the set of probability distributions. Previous work on the axiomatic characterization of marginal independence used as a base structure statements involving only two disjoint sets, of the form $A\ind B$. Our formulation allows for total independence of more than two sets of random variables, which results in a more intuitive set of axioms than appears in \cite{geiger}.

In Section \ref{sec:discretemods}, we then restrict to discrete random variables and show that the resulting varieties are toric and smooth after a linear change of coordinates. 
In \cite{boege,drton,sullivant}, the proof that the mentioned family of
marginal independence models are toric uses a linear change of coordinates 
into \emph{M\"obius coordinates}, by using the M\"obius transformation on
an appropriately defined poset.
In this paper, we describe a change of coordinates 
using the cumulative distribution function, which is classically used to describe marginal independence.  This description has the advantage of tying the results 
to the classical description of marginal independence. We obtain the following result from this section:

\begin{thm*}
For a split closed order ideal $\mathcal{C}\subseteq P\Pi_{n,2}$, the affine variety that defines the model is a smooth toric variety in cdf coordinates.
\end{thm*}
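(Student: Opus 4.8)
The plan is to carry out the entire argument in \emph{cdf coordinates}, where marginal independence becomes multiplicative, and then to read off both toricity and smoothness from an explicit monomial parametrization. The starting observation is that, for discrete random variables, the cumulative distribution function $F_{\bfa} = \sum_{\bfb \le \bfa} p_{\bfb}$ is an invertible linear function of the joint probabilities $p_{\bfb}$, its inverse being the M\"obius transform on the product poset of states. Hence passing from $p$-coordinates to cdf coordinates is an invertible affine-linear change of coordinates, and it suffices to describe the image of the model. The key translation is the classical fact that variables $X_{B_1}, \dots, X_{B_k}$ are jointly independent precisely when the marginal cdf over $S = B_1 \cup \cdots \cup B_k$ factors as
\[
F^{S}_{\bfa_S} \;=\; \prod_{i=1}^{k} F^{B_i}_{\bfa_{B_i}},
\]
where each marginal cdf is obtained from $F$ by fixing the coordinates outside the relevant block to their maximal state. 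Thus every statement indexed by a partial set partition $\pi \in \mathcal{C}$ is converted from higher-degree polynomial constraints on the $p_{\bfb}$ into \emph{binomial} relations among the cdf coordinates.

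With this in hand, I would assemble the binomial relations contributed by all statements in the split closed order ideal $\mathcal{C}$ and prove that they define a toric variety. The cleanest route is to produce a monomial parametrization directly: introduce parameters indexed by the states of the inclusion-minimal blocks that occur across $\mathcal{C}$, together with parameters recording the block marginal cdfs that cannot be factored further within $\mathcal{C}$, and write each remaining cdf coordinate $F_{\bfa}$ as the monomial prescribed by repeatedly applying the factorization above. Here the \emph{split closed} hypothesis is exactly what guarantees that these factorizations are mutually compatible --- that two different chains of splittings leading to the same coordinate yield the same monomial --- so that the parametrizing map is well defined. The model variety is then the Zariski closure of the image of a monomial map, hence toric; equivalently, one checks that the binomial ideal generated by the factorization relations is prime. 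I expect the main obstacle to be precisely this bookkeeping: showing that the finitely many factorizations drawn from $\mathcal{C}$ generate the \emph{entire} vanishing ideal and define a prime (rather than merely binomial) ideal, and that the combinatorics of split closed order ideals yields a single consistent global parametrization.

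Finally, for smoothness I would exploit the structure of the parametrization rather than computing a fan. The factorization relations let one solve for the composite cdf coordinates as monomials in a distinguished set of free coordinates, so that the model variety is realized as the graph of a monomial map over an affine coordinate subspace; the partial order on $\mathcal{C}$ (with finer partitions below) supplies the stratification needed to designate each coordinate as free or dependent without circularity. Since a graph is isomorphic to its domain, the variety is isomorphic to an affine space, equivalently to a product $\mathbb{A}^{r} \times (\cc^{*})^{s}$, which is smooth. Phrased in terms of affine semigroups, this amounts to verifying that the semigroup defining each coordinate chart is generated by part of a lattice basis, i.e.\ that the associated cones are unimodular, and split closedness is what forces this regularity. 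The one technical point to dispatch is the behavior along the boundary of the simplex where some cdf coordinates vanish, but since smoothness concerns the affine variety defined by the binomials rather than its intersection with the probability simplex, the graph description settles it uniformly.
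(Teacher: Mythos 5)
Your proposal is correct in substance and rests on the same combinatorial foundation as the paper: cdf coordinates as an invertible linear change of coordinates, the factorization of every cdf coordinate with disconnected support into the coordinates of its maximal connected blocks, and split closedness as the reason this decomposition is unique and globally consistent (this is the content of Propositions \ref{prop:meet} and \ref{prop:decomp} and Corollary \ref{cor:decmp}). Where you genuinely diverge is in the algebraic finish. The paper proves toricity by a Gr\"obner basis argument: choosing a term order in which each disconnected-support coordinate $q_i$ is the leading term of its relation $f_i = q_i - q_{j^{(1)}}\cdots q_{j^{(k)}}$, the leading terms are pairwise coprime, so the $f_i$ form a Gr\"obner basis whose initial ideal is generated by variables and hence prime, making $J_\mathcal{C}$ prime and toric; smoothness is then a separate Jacobian computation using $\partial f_i/\partial q_j = \delta_{ij}$ on the disconnected-support columns. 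You instead realize $V(J_\mathcal{C})$ as the graph of a monomial map over the affine space of connected-support coordinates. This is a cleaner, more unified route, because once one observes that each monomial $q_{j^{(1)}}\cdots q_{j^{(k)}}$ involves only connected-support coordinates, the quotient $\rr[Q]/J_\mathcal{C}$ is visibly a polynomial ring in those coordinates, hence a domain; this single observation yields primality, the fact that the listed binomials are the whole vanishing ideal, toricity, and smoothness all at once. You should make that connection explicit: the ``main obstacle'' you flag in your toricity paragraph (primality and completeness of the relations) is exactly what your own graph realization dispatches, so it should not be left as expected bookkeeping. Two small corrections: the graph is isomorphic to affine space $\mathbb{A}^{r}$ on the nose, not ``equivalently to $\mathbb{A}^{r}\times(\cc^{*})^{s}$'' (these are not isomorphic when $s>0$); and your parameter set can be described simply as one parameter per connected-support coordinate, which is precisely the parametrization by connected sets appearing in Corollary \ref{prop:par}.
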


This is a key result used in the proof of the correspondence between split closed order ideals and marginal independence models. We then move from the affine description of our models to the projective description by showing a parametrization of the homogeneous ideal corresponding to the model.

Finally, in Section \ref{sec:graphsimp} we discuss how marginal independence models associated to graphs and simplicial complexes can fit in the framework proposed in this paper. We then compare both classes of models and give an explicit characterization of the models that can be described using both simplicial complexes and graphs. We also count the number of distinct models 
in these three classes for up to $4$ random variables.


\section{Marginal Independence and the Poset of Partial Set Partitions} \label{sec:poset}

In this section we discuss general marginal independence models,
for completely general types of random variables.  
Marginal independence is naturally described via constraints on 
the joint cumulative distribution function of a collection of random variables.
We will also see a combinatorial structure which can be used to
capture all marginal independence models. 
This structure is called the poset of partial set partitions, 
and we will show how certain order ideals of this poset 
correspond to marginal independence models. 
We achieve this by introducing a closure operation on 
order ideals which is based on translating certain implications  
of marginal independence statements to the language of this poset. 

\begin{defn}
Let $X_1, \ldots, X_n$ be random variables in $\mathbb{R}$.
The joint cumulative distribution function (cdf) of $X_1, \ldots, X_n$, 
is the function defined on $(\rr\cup \{\pm \infty\})^n$ defined by the rule
\[
F(a_1, \ldots, a_n)   =  {\rm P}( X_1 \leq a_1, \ldots, X_n \leq a_n ).
\]
\end{defn}

Note that for a function $F$ to be a joint cumulative distribution function
it must satisfy a few properties.
\begin{itemize}
\item  For all $i$, $ \lim_{x_i \rightarrow -\infty}  F(x_1, \ldots, x_n) = 0$
\item  $\lim_{x_1, \ldots, x_n \rightarrow \infty} F(x_1, \ldots, x_n) = 1$.
\item  $F$ should be nondecreasing in each coordinate:  That is, if $a \leq b$ coordinate-wise,
then $F(a) \leq F(b)$.  
\item  $F$ should be right continuous in each coordinate:  That is, for each $i$,
and all $a_1, \ldots, a_n$,  
\[
\lim_{x_i \rightarrow a_i^+}  F(a_1, \ldots, a_{i-1}, x_i, a_{i+1}, \ldots, a_n )  =  
F(a_1, \ldots, a_n)
\]
\item  For all $a, b$,  $F(\max(a,b)) + F(\min(a,b))  \geq F(a) + F(b)$.  
\end{itemize}

These conditions are not exhaustive; that is, there are functions that satisfy all of these
properties but are not cdfs of a collection of random variables.  However,
we do not need to know a precise characterization of functions which are
joint cdfs to derive our results.

Note that the joint cdf contains all the marginal cdfs as well.
For example, with three random variables, we have
\[
F(a_1, a_2, \infty)  =  {\rm P}(X_1 \leq a_1, X_2 \leq a_2,  X_3 \leq \infty) 
=  {\rm P}(X_1 \leq a_1, X_2 \leq a_2)  
\]

Marginal independence of collections of random variables can be expressed in terms
of the joint cdf.    Let $X = (X_1, \ldots, X_n)$
denote the random vector, and $X_A = (x_i :  i \in A)$ be a subvector.

\begin{defn}
Let $A, B \subseteq [n]$, be disjoint subsets.  We say that $X_A$ and $X_B$ are
\emph{marginally independent} (denoted $X_A \ind X_B$, or just $A \ind B$ for short),
if
\[
F(x)  =  F(y) F(z)
\]
for all $x$ such that $x_i = \infty$ for all $i \in [n] \setminus (A \cup B)$ and where
\[
y_i  =  \begin{cases}
x_i  &  \mbox{if }  i \in A  \\
\infty &  \mbox {otherwise}
\end{cases}  \quad
\quad \quad
\mbox{ and }
\quad
\quad \quad
z_i  =  \begin{cases}
x_i  &  \mbox{if }  i \in B  \\
\infty &  \mbox {otherwise}
\end{cases} 
\]
\end{defn}

\begin{ex}
Suppose $n = 4$, and consider the marginal independence statement $X_1 \ind (X_2, X_4)$
(or $\{1 \} \ind \{2, 4 \}$, or $1 \ind 24$).  This translates into the condition:
\[
F(x_1, x_2, \infty, x_4)  =  F(x_1, \infty, \infty, \infty) F(\infty, x_2, \infty, x_4),\qquad \forall\; x_1,x_2,x_4 \in \rr \cup \{\infty\}
\]
Said another way, we have, for all $x_1, x_2, x_4$,
\[
{\rm P}(X_1 \leq x_1, X_2 \leq x_2, X_4 \leq x_4)  =  
{\rm P}(X_1 \leq x_1) 
{\rm P}( X_2 \leq x_2, X_4 \leq x_4).
\]
\end{ex}
More generally, we can define marginal independence for a collection of sets.

\begin{defn}\label{def:margmult}
Let $A_1, \ldots, A_k \subseteq [n]$ with $A_i \cap A_j  = \emptyset$ for all $i \neq j$.
Then $X_{A_1} \ind \cdots \ind X_{A_k}$ if, for all $x$ with
$x_i = \infty$ for all $i \in [n] \setminus (A_1 \cup \cdots \cup A_k)$, we have
\[
F(x) =  F(y^{(1)})  \cdots  F(y^{(k)})
\]
where
\[
y^{(j)}_i  =   \begin{cases}  
x_i  & \mbox{if }  i \in A_j \\ 
\infty  &  \mbox{otherwise}.
\end{cases}
\]
\end{defn}

Now that we have the notion of generalized marginal independence, we can discuss
marginal independence models.  A marginal independence model is a collection
of marginal independence statements and the set of probability distributions that
are compatible with those statements.  We will describe a completely general combinatorial
structure for representing all such marginal independence models.  This depends on 
the structure of a certain poset called the poset of partial set partitions.

\begin{defn}	
A \emph{partial set partition} of $[n]$ is an unordered list of nonempty sets 
$\pi = \pi_1 | \cdots | \pi_k$
such that each $\pi_i \subseteq [n]$, and $\pi_i \cap \pi_j = \emptyset$ for all $i \neq j$.
The set of all partial set partitions of $[n]$ is denoted $P\Pi_n$.   A set $\pi_i$
in the partial set partition $\pi$ is called a \emph{block}. The ground set, $|\pi|$ of $\pi$ is defined as $|\pi| = \cup_{i =1}^k \pi_i$.
The set of partial set partitions where each $\pi$ has at least $2$ blocks is denoted
$P\Pi_{n, 2}$.
\end{defn}

Note the elements of $P\Pi_{n, 2}$ correspond to marginal independence statements.
Specifically $\pi = \pi_1| \cdots | \pi_k$ corresponds to the independence statement
$X_{\pi_1} \ind \cdots  \ind X_{\pi_k}$.

We want to turn $P\Pi_n$ into a poset that is compatible with marginal independence implications.
We will explore some implications between marginal independence statements that will 
lead naturally to a poset structure on $P\Pi_n$.

\begin{lemma}\label{lem:marginalimpl}
If a cdf $F$ satisfies $A_1\ind \ldots \ind A_k$ then,
 \begin{enumerate}
	\item  $F$ satisfies $\cup_{s\in S} A_s \ind \cup_{t\in T}A_t$ for all $S,T \subseteq [k]$ such that $S\cap T = \emptyset$.
	\item $F$ satisfies $B_1\ind \ldots \ind B_k$ where $B_i \subseteq A_i$ for all $i$.
\end{enumerate}
\end{lemma}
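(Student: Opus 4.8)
The plan is to prove both parts directly from Definition \ref{def:margmult}, tracking carefully which coordinates are set to $\infty$ and using two elementary facts about a cdf: that pushing a coordinate to $\infty$ marginalizes that variable out, and that $F$ evaluated at the all-$\infty$ point equals $1$. Write $m = A_1 \cup \cdots \cup A_k$.

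For Part (1), I would first isolate a \emph{grouping} principle. Fix any point $x$ with $x_i = \infty$ for $i \notin m$, and for a subset $U \subseteq [k]$ let $y^{(U)}$ be the point with $y^{(U)}_i = x_i$ for $i \in \cup_{u \in U} A_u$ and $y^{(U)}_i = \infty$ otherwise. I claim $F(y^{(U)}) = \prod_{u \in U} F(y^{(u)})$. Indeed, $y^{(U)}$ has $\infty$ in every coordinate outside $m$, so the hypothesis $A_1 \ind \cdots \ind A_k$ applies to $y^{(U)}$ and factors it over all $k$ blocks; each block $A_j$ with $j \notin U$ contributes $F$ of the all-$\infty$ point, which is $1$, while each $A_u$ with $u \in U$ contributes exactly $F(y^{(u)})$. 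Now take disjoint $S, T \subseteq [k]$, set $A_S = \cup_{s \in S} A_s$ and $A_T = \cup_{t \in T} A_t$, and fix any $x$ with $x_i = \infty$ outside $A_S \cup A_T$. Since $A_S \cup A_T \subseteq m$, such $x$ also has $\infty$ outside $m$, so the claim applied with $U = S$, $U = T$, and $U = S \cup T$ gives $F(y^{(S \cup T)}) = F(y^{(S)})\, F(y^{(T)})$. As the support of $y^{(S \cup T)}$ is exactly $A_S \cup A_T$, where $x$ is finite, we have $y^{(S \cup T)} = x$, and this is precisely the defining factorization for $A_S \ind A_T$.

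For Part (2), I would argue that the required factorization is literally a restriction of the hypothesis. Fix $x$ with $x_i = \infty$ for $i \notin B_1 \cup \cdots \cup B_k$; since $B_j \subseteq A_j$ we have $B_1 \cup \cdots \cup B_k \subseteq m$, so the hypothesis factors $F(x) = \prod_{j=1}^k F(y^{(j)})$ with $y^{(j)}$ supported on $A_j$. Let $v^{(j)}$ be the point with $v^{(j)}_i = x_i$ for $i \in B_j$ and $\infty$ otherwise, i.e.\ the $j$-th factor demanded by $B_1 \ind \cdots \ind B_k$. The coordinate check is that $y^{(j)} = v^{(j)}$: on $B_j$ both equal $x_i$; on $A_j \setminus B_j$ the index lies outside $B_1 \cup \cdots \cup B_k$ (by disjointness of the $A_i$ together with $B_l \subseteq A_l$), so $x_i = \infty$ and both points read $\infty$; and off $A_j$ both read $\infty$. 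Hence $F(y^{(j)}) = F(v^{(j)})$ for every $j$, and the hypothesis factorization is exactly the one required.

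The computations are entirely routine; the only genuinely load-bearing observation is that the full independence hypothesis may be applied at \emph{any} point whose finite coordinates lie inside $m$, which is what lets us both merge blocks (Part 1) and shrink them (Part 2). I do not anticipate a real obstacle beyond the bookkeeping of $\infty$-coordinates, namely verifying in each application that the relevant support is contained in $m$ so that the hypothesis is legitimately invoked.
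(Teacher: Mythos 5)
Your proof is correct and takes essentially the same route as the paper's: both parts apply the full hypothesis $A_1\ind\cdots\ind A_k$ at points whose finite coordinates lie inside the relevant union of blocks, use $F(\infty,\ldots,\infty)=1$ to drop the factors coming from blocks outside the group, and regroup (Part 1), and both rest on the observation that for $x$ supported in $B_1\cup\cdots\cup B_k$ the restriction of $x$ to $A_j$ coincides with its restriction to $B_j$ (Part 2). The only difference is presentational: you isolate the grouping claim for an arbitrary $U\subseteq[k]$ and state the all-$\infty$ normalization explicitly, both of which the paper uses implicitly.
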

	\begin{proof}
		First, we will show that for $S,T \subseteq [k]$ such that $S\cap T = \emptyset$, if $F$ satisfies $A_1\ind \ldots \ind A_k$, then it satisfies $\cup_{s\in S} A_s \ind \cup_{t\in T}A_t$. Let $x$ be such that $x_i =\infty$ if $i\in [n]\setminus  \cup_{s\in S} A_s$, in particular, $x_i = \infty$ if $i \in [n] \setminus (A_1 \cup \cdots \cup A_k)$ as well. Thus,
\[
F(x) =  F(y^{(1)})  \cdots  F(y^{(k)})
\]
where
\[
y^{(j)}_i  =   \begin{cases}  
x_i  & \mbox{if }  i \in A_j \mbox  { and } j\in S \\ 
\infty  &  \mbox{otherwise}.
\end{cases}
\]
Hence, $$F(x) = \prod_{s\in S} F\left(y^{(s)}\right).$$
We get an analogous result for $\cup_{t\in T}A_t$.

Now, let $x$ be such that $x_i = \infty $ if $i \in [n]\setminus \cup_{t\in S\cup T}A_t$. Then, it holds that 
\[
F(x) =  F(y^{(1)})  \cdots  F(y^{(k)})
\]
where
\[
y^{(j)}_i  =   \begin{cases}  
x_i  & \mbox{if }  i \in A_j \mbox  { and } j\in S\cup T \\ 
\infty  &  \mbox{otherwise}.
\end{cases}
\]
Hence, \begin{equation*}
	\begin{aligned}
		F(x) &= \prod_{s\in S} F\left(y^{(s)}\right) \prod_{t\in T} F\left(y^{(t)}\right)\\
		&= F(w)F(z)
	\end{aligned}
\end{equation*}
where 
\[
w_i  =  \begin{cases}
x_i  &  \mbox{if }  i \in \cup_{s\in S}A_s  \\
\infty &  \mbox {otherwise}
\end{cases}  \quad
\quad \quad
\mbox{ and }
\quad
\quad \quad
z_i  =  \begin{cases}
x_i  &  \mbox{if }  i \in \cup_{t\in T}A_t  \\
\infty &  \mbox {otherwise}
\end{cases} 
\]

Now we will show that if $F$ satisfies $A_1\ind \ldots \ind A_k$, then it satisfies $B_1\ind \ldots \ind B_k$ whenever $B_i \subseteq A_i$ for all $i$. Assume $F$ satisfies $A_1\ind \ldots \ind A_k$. Let $x$ be such that $x_i = \infty$ for all $i\in [n] \setminus (B_1\cup \ldots \cup B_k)$, then it follows that 
 \[
F(x) =  F(y^{(1)})  \cdots  F(y^{(k)})
\]
where
\[
y^{(j)}_i  =   \begin{cases}  
x_i  & \mbox{if }  i \in B_j \subseteq A_j \\ 
\infty  &  \mbox{otherwise}.
\end{cases}
\]
which proves $F$ also satisfies $B_1\ind \ldots \ind B_k$.
\end{proof}
\begin{prop}\label{prop:marginalimplication}
Suppose that the joint cdf $F$ satisfies the marginal independence statement $A_1 \ind \cdots \ind A_k$.  Let $B_1 \ind \cdots \ind B_l$ be another marginal independence statement such that
for each $i\in [l]$, there is a set $I_i \subseteq [k]$ such that $I_i \cap I_j = \emptyset$ for all $i \neq j$, and $B_i \subseteq \cup_{t \in I_i} A_t$.
Then $F$ also satisfies $B_1 \ind \cdots \ind B_l$.
\end{prop}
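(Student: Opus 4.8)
The plan is to factor the implication through the intermediate statement $C_1 \ind \cdots \ind C_l$, where $C_i := \cup_{t \in I_i} A_t$, and then invoke the two parts of Lemma~\ref{lem:marginalimpl}. First I would observe that the $C_i$ are pairwise disjoint: since the $A_t$ are pairwise disjoint and $I_i \cap I_j = \emptyset$ for $i \neq j$, no block $A_t$ can appear in two different $C_i$, so $C_i \cap C_j = \emptyset$. Hence $C_1 \ind \cdots \ind C_l$ is a legitimate marginal independence statement, and since $B_i \subseteq C_i$ for every $i$, the final conclusion $B_1 \ind \cdots \ind B_l$ follows immediately from part~(2) of Lemma~\ref{lem:marginalimpl}. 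So the entire content lies in establishing $C_1 \ind \cdots \ind C_l$.

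To prove that statement I would argue directly from Definition~\ref{def:margmult}. Write $I := I_1 \cup \cdots \cup I_l \subseteq [k]$, and let $x$ be a point with $x_p = \infty$ for all $p \in [n] \setminus \cup_{t \in I} A_t$. Because this support is contained in $A_1 \cup \cdots \cup A_k$, the hypothesis $A_1 \ind \cdots \ind A_k$ applies and gives $F(x) = \prod_{j=1}^{k} F(y^{(j)})$, with $y^{(j)}_p = x_p$ for $p \in A_j$ and $y^{(j)}_p = \infty$ otherwise. For an index $j \notin I$, the block $A_j$ is disjoint from the support of $x$, so $y^{(j)}$ has every coordinate equal to $\infty$ and therefore $F(y^{(j)}) = 1$. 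The surviving factors can then be regrouped as
\[
F(x) = \prod_{j \in I} F(y^{(j)}) = \prod_{i=1}^{l} \prod_{t \in I_i} F(y^{(t)}).
\]

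The remaining task is to recognize each inner product as a single cdf value. For fixed $i$, let $w^{(i)}$ be the point with $w^{(i)}_p = x_p$ for $p \in C_i$ and $w^{(i)}_p = \infty$ otherwise; this is exactly the factor demanded by the definition of $C_1 \ind \cdots \ind C_l$. Since $w^{(i)}$ is supported on $C_i \subseteq A_1 \cup \cdots \cup A_k$, I would apply the hypothesis a \emph{second} time, now to $w^{(i)}$: the factors indexed by $t \notin I_i$ are trivial because those blocks miss the support of $w^{(i)}$, while the factors indexed by $t \in I_i$ coincide with $y^{(t)}$ because $w^{(i)}$ agrees with $x$ on $A_t \subseteq C_i$. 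This yields $F(w^{(i)}) = \prod_{t \in I_i} F(y^{(t)})$, and substituting back gives $F(x) = \prod_{i=1}^{l} F(w^{(i)})$, which is precisely $C_1 \ind \cdots \ind C_l$.

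I expect the main obstacle to be conceptual rather than computational: one cannot simply iterate the two-block grouping of Lemma~\ref{lem:marginalimpl}(1), since pairwise two-block statements do not recover the full $l$-fold factorization that joint independence requires. The crux is therefore the genuinely multi-block grouping carried out above, which hinges on two observations — that blocks indexed outside $I$ contribute trivial factors $F(\infty, \ldots, \infty) = 1$, and that the hypothesis can be re-applied to each $w^{(i)}$ to collapse $\prod_{t \in I_i} F(y^{(t)})$ into $F(w^{(i)})$. The disjointness bookkeeping (that $A_t$ for $t \notin I_i$ avoids $C_i$, and that the $C_i$ are mutually disjoint) is exactly what makes both observations valid.
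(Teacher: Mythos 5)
Your proof is correct, and at the top level it follows the same route as the paper: both factor the implication through the intermediate statement $C_1 \ind \cdots \ind C_l$ with $C_i = \cup_{t\in I_i} A_t$, and both finish by shrinking $B_i \subseteq C_i$ via Lemma~\ref{lem:marginalimpl}(2). The difference is in how the intermediate statement is justified. The paper simply cites Lemma~\ref{lem:marginalimpl}(1) for it, even though that part, as literally stated, only asserts the \emph{two}-block grouping $\cup_{s\in S}A_s \ind \cup_{t\in T}A_t$; the $l$-fold version being invoked is really an implicit generalization whose proof technique (trivial factors $F(\infty,\ldots,\infty)=1$ for blocks outside the index set, plus re-applying the hypothesis to identify $\prod_{t\in I_i}F(y^{(t)})$ with $F(w^{(i)})$) lives inside the paper's proof of the lemma. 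You instead prove the multi-block grouping directly from Definition~\ref{def:margmult}, writing out exactly those two steps, so your argument is self-contained and patches this mild imprecision. One small caveat: your closing remark that the two-block lemma cannot be leveraged is slightly overstated --- naive pairwise iteration indeed fails, but a careful induction on $l$ (splitting off one group at a time and re-applying the two-block statement to the restricted point) does recover the $l$-fold factorization; still, your direct argument is arguably cleaner than such an induction, at the cost of being longer than the paper's two-line citation.
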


\begin{proof}
Assume $F$ satisfies the marginal independence statement $A_1 \ind \cdots \ind A_k$.  Let $B_1 \ind \cdots \ind B_l$ be another marginal independence statement such that
for each $i\in [l]$, there is a set $I_i \subseteq [k]$ such that $I_i \cap I_j = \emptyset$ for all $i \neq j$, and $B_i \subseteq \cup_{t \in I_i} A_t$. By Lemma \ref{lem:marginalimpl} (1), we know that $F$ satisfies $\cup_{t \in I_1} A_t \ind \cdots \ind \cup_{t \in I_l} A_t$. Finally, $F$ satisfies $B_1\ind \cdots \ind B_l$ by Lemma \ref{lem:marginalimpl} (2), since $B_i \subseteq \cup_{t \in I_i} A_t$ for all $i \in [l]$.
\end{proof}

This yields a partial order on $P\Pi_{n}$ by the same rule as in Proposition \ref{prop:marginalimplication}.

\begin{defn}
Let $\pi = \pi_1| \cdots | \pi_k$ and $\tau = \tau_1 | \cdots | \tau_l$ be two
partial set partitions.
Define a partial order on $P\Pi_n$ by declaring $\pi > \tau$ if 
for each $i \in [l]$, there is a set $I_i \in [k]$ such that $I_i \cap I_J = \emptyset$ for
all $i \neq j$, and $\tau_i \subseteq \cup_{t \in I_i}  \pi_t$.
\end{defn}

Note that the covering relations in $P\Pi_n$ are of two types.
We have that $\pi$ covers $\tau$ if either
\begin{enumerate}
\item  $\pi$ can be obtained from $\tau$ by adding
a single element to one of the blocks of $\tau$, or
\item  $\pi$ can be obtained from $\tau$ by splitting  a block of $\tau$ into two
blocks.
\end{enumerate}

\begin{prop}\label{prop:leq}
	In $P\Pi_{n}$, $\pi < \tau$ if and only if $S=|\pi| \subseteq |\tau|$ and $\pi \succ_{S} \tau\cap S$, where $\succ_{S}$ is the standard order in the lattice of set partitions of $S$ and $\tau\cap S = \tau_1\cap S|\cdots|\tau_l\cap S$.
\end{prop}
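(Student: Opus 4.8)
The plan is to unwind the definition of the order and re-express the ``disjoint covering'' condition as a refinement statement about the common ground set $S = |\pi|$. Writing $\pi = \pi_1 | \cdots | \pi_k$ and $\tau = \tau_1 | \cdots | \tau_l$, the relation $\pi < \tau$ means, by the defining rule applied with $\tau$ in the role of the larger element, that there exist pairwise disjoint index sets $I_1, \dots, I_k \subseteq [l]$ with $\pi_i \subseteq \bigcup_{t \in I_i} \tau_t$ for every $i \in [k]$. Taking the union over $i$ gives $S = \bigcup_i \pi_i \subseteq \bigcup_t \tau_t = |\tau|$, so the inclusion $S \subseteq |\tau|$ is automatic and I only need to prove the equivalence ``such $I_i$ exist'' $\iff$ ``$\tau \cap S$ refines $\pi$'', where I read $\pi \succ_S \tau \cap S$ as saying that $\tau \cap S$ refines $\pi$ in the partition lattice of $S$ (the coarser partition $\pi$ being the larger one). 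Because $S \subseteq |\tau|$, every element of $S$ lies in exactly one block of $\tau$, so $\tau \cap S$ is a genuine set partition of $S$ once its empty blocks are deleted, and the comparison in the lattice of set partitions of $S$ makes sense.

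For the forward direction, I would fix such disjoint $I_i$ and show directly that any two elements lying in a common block of $\tau \cap S$ lie in a common block of $\pi$. Indeed, if $a, b \in S$ with $a, b \in \tau_t$, then writing $a \in \pi_i$ and $b \in \pi_j$, the inclusions $\pi_i \subseteq \bigcup_{s \in I_i}\tau_s$ and $\pi_j \subseteq \bigcup_{s \in I_j}\tau_s$, combined with the fact that $a$ and $b$ each lie in the single $\tau$-block $\tau_t$, force $t \in I_i \cap I_j$; disjointness of the $I$'s then gives $i = j$. This is precisely the statement that $\tau \cap S$ refines $\pi$, and it does not depend on the particular choice of the $I_i$.

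For the converse, assuming $S \subseteq |\tau|$ and that $\tau \cap S$ refines $\pi$, I would construct the index sets explicitly by setting $I_i = \{\, t \in [l] : \tau_t \cap \pi_i \neq \emptyset \,\}$. The covering inclusion $\pi_i \subseteq \bigcup_{t \in I_i}\tau_t$ is immediate from $\pi_i \subseteq S \subseteq |\tau|$, and pairwise disjointness of the $I_i$ is where the refinement hypothesis is used: if some $t$ lay in $I_i \cap I_j$ with $i \neq j$, I could pick $a \in \tau_t \cap \pi_i$ and $b \in \tau_t \cap \pi_j$, which lie in a common block of $\tau \cap S$ but in distinct blocks of $\pi$, contradicting refinement. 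This yields $\tau > \pi$, i.e.\ $\pi < \tau$, and completes the equivalence.

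The two set-chasing verifications are routine; the one place needing genuine care, and which I regard as the crux, is the translation between the existential ``disjoint index sets'' formulation of the order and the intrinsic refinement statement --- in particular checking that disjointness of the $I_i$ corresponds exactly to $\tau \cap S$ never merging two distinct $\pi$-blocks. I would also stay careful about the orientation of $\succ_S$ (coarser equals larger) and about discarding the empty blocks of $\tau \cap S$, since these are the only points where a direction error could slip in.
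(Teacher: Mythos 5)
Your proof is correct, and it takes a genuinely different route from the paper. The paper's entire proof is the single sentence ``This follows directly from the covering relations'': it leans on the (unproved) description of covering relations in $P\Pi_n$ --- adding one element to a block, or splitting a block in two --- and observes that composing such moves downward from $\tau$ exactly deletes elements of the ground set and merges blocks, which is the stated condition. You instead work directly from the definition of the order via pairwise disjoint index sets $I_1,\dots,I_k \subseteq [l]$ with $\pi_i \subseteq \bigcup_{t\in I_i}\tau_t$, proving both implications by explicit set-chasing: in one direction disjointness of the $I_i$ forces each block $\tau_t \cap S$ into a single $\pi$-block, and in the other the choice $I_i = \{t : \tau_t \cap \pi_i \neq \emptyset\}$ inherits disjointness from the refinement hypothesis. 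Your version is more self-contained: it never invokes the covering-relation description, and in fact your equivalence is exactly what one would need to \emph{justify} that description, whereas the paper's one-liner implicitly defers that verification. What the paper's approach buys is brevity and a picture (the order is generated by two elementary moves); what yours buys is rigor from first principles, including the correct handling of the two points you flag --- the orientation of $\succ_S$ (coarser $=$ larger, so $\pi \succ_S \tau\cap S$ means $\tau \cap S$ refines $\pi$) and the deletion of empty blocks of $\tau \cap S$ --- neither of which the paper's proof addresses explicitly.
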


	\begin{proof}
		This follows directly from the covering relations.
	\end{proof}

\begin{rmk}
Proposition \ref{prop:leq} implies that $\pi < \tau$ if and only if every part of $\tau \cap |\pi|$ is in a part of $\tau$. Additionally, note that $P\Pi_n$ is not, in general, a lattice.  Indeed, the elements $1|2$ and $3|4$
have two least upper bounds, namely $13|24$ and $14|23$.  This shows a key difference with the poset of set partitions $\Pi_n$, which is well known to be a lattice \cite{stanley}.
\end{rmk}
On the other hand, $P\Pi_n$ is a graded poset, with 
\[
\rho(\pi_1|\cdots | \pi_k) =\begin{cases}
	0 & \mbox{if } \pi = \emptyset\\
	\#( \cup_{i} \pi_i) + k -1 & \mbox{otherwise}
\end{cases} 
\]  
This formula follows by counting the number of covering relations to get from the bottom
element $\emptyset$ to a partial set partition $\pi$. Furthermore, we can restate the rank function as follows:
\[
\rho(\pi_1|\cdots | \pi_k) = \# \mbox{ of symbols in $\pi$} = 
\#( \cup_{i} \pi_i) + \# \mbox{ of bars}.\]

\begin{rmk}
Note that $P\Pi_n$ contains the Boolean lattice $B_n$ as a subposet.  The
lattice of set partitions $\Pi_n$ is not a subposet of $P\Pi_n$,
but rather its opposite $\Pi_n^{opp}$ is a subposet. In fact, $P\Pi_n$ contains the opposite of the lattice of set partitions of any $S\subseteq [n]$.
\end{rmk}  

\begin{center}
	\begin{figure}
	
	\subfigure[$P\Pi_{3}$]{
		\begin{tikzpicture}[scale=.5]

\node (e) at (0,-5) {$\emptyset$};

\node (1) at (-2,-2.5) {$1$};
\node (2) at (0,-2.5) {$2$};
\node (3) at (2,-2.5) {$3$};

\node (12) at (-2,0) {$12$};
\node (13) at (0,0) {$13$};
\node (23) at (2,0) {$23$};
  
  \node (1p2) at (-3,2.5) {$1|2$};
  \node (2p3) at (3,2.5) {$2|3$};
  \node (1p3) at (-1,2.5) {$1|3$};
  \node (123) at (1,2.5) {$123$};
 
  \node (1p23) at (-2,5.5) {$1|23$};
  \node (12p3) at (2,5.5) {$12|3$};
  \node (13p2) at (0,5.5) {$13|2$};
  
  \node (1p2p3) at (0,8) {$1|2|3$};
  \draw (1p2p3)--(1p23)--(1p2);
  \draw (1p2p3)--(12p3)--(1p3);
  \draw (1p2p3)--(13p2)--(1p2);
  \draw (1p23)--(1p3);
  \draw (12p3)--(2p3);
  \draw (13p2)--(2p3);
  
  \draw (13p2) -- (123);
  \draw (12p3) -- (123);
  \draw (1p23) -- (123);
  
  \draw (1p2) -- (12);
  \draw (123) -- (12);
  
  \draw (1p3) -- (13);
  \draw (123) -- (13);
  
  \draw (2p3) -- (23);
  \draw (123) -- (23);
  
  \draw (12) -- (1);
  \draw (12) -- (2);
  \draw (13) -- (1);
  \draw (13) -- (3);
  \draw (23) -- (2);
  \draw (23) -- (3);
  
  \draw (1) -- (e);
  \draw (2) -- (e);
  \draw (3) -- (e);
\end{tikzpicture}
}
\subfigure[$P\Pi_{3,2}$]{

\begin{tikzpicture}[scale=.5]  
  \node (1p2) at (-2,2.5) {$1|2$};
  \node (2p3) at (2,2.5) {$2|3$};
  \node (1p3) at (0,2.5) {$1|3$};
 
  \node (1p23) at (-2,5) {$1|23$};
  \node (12p3) at (2,5) {$12|3$};
  \node (13p2) at (0,5) {$13|2$};
  
  \node (1p2p3) at (0,7.5) {$1|2|3$};
  \draw (1p2p3)--(1p23)--(1p2);
  \draw (1p2p3)--(12p3)--(1p3);
  \draw (1p2p3)--(13p2)--(1p2);
  \draw (1p23)--(1p3);
  \draw (12p3)--(2p3);
  \draw (13p2)--(2p3);
 
\end{tikzpicture}

}
\caption{Hasse diagrams of $P\Pi_{3}$ and $P\Pi_{3,2}$}
\end{figure}
\end{center}

For studying marginal independence models, it is more natural to consider the 
poset $P\Pi_{n,2}$ of partial set partitions where each partition has at least two blocks.
We will see that marginal independence models correspond to certain types of order
ideals in $P\Pi_{n,2}$.

\begin{defn}\label{genmargmodel}
Let $\pi^{(1)},\ldots, \pi^{(k)} \subseteq P\Pi_{n,2}$, then 
$$\mathcal{C} = \left\{\pi_1^{(1)}|\cdots|\pi_{i_1}^{(1)},\ldots, \pi_1^{(k)}|\cdots|\pi_{i_k}^{(k)}\right\}$$
or, equivalently,
$$\mathcal{C} = \left\{X_{\pi_1^{(1)}}\ind \ldots \ind X_{\pi_{i_1}^{(1)}},\ldots, X_{\pi_1^{(k)}}\ind \ldots \ind X_{\pi_{i_k}^{(k)}}\right\}$$
is a list of independence statements on random variables $X_1,\ldots,X_n$. A \emph{marginal independence model}, $\mathcal{M}_\mathcal{C}$, on these random variables is the set of distributions $P$ that satisfy all the statements in $\mathcal{C}$. 

\end{defn}
\begin{defn}\label{def:split1}
	 Let $\pi = \pi_1|\cdots | \pi_k \in P\Pi_{n,2}$ and $\tau = \tau_1 | \tau_2 \in P\Pi_{n,2}$. We say $\tau$ \textit{splits} $\pi$ if $\pi_i = \tau_1 \cup \tau_2$ for some $i$. We denote the \textit{splitting of $\pi$ by $\tau$} with $\pi^\tau = \pi_1|\cdots|\pi_{i-1}|\tau_1 | \tau_2|\pi_{i+1}| \cdots  | \pi_k$.
\end{defn}

\begin{defn}
	Let $\mathcal{C}$ be an order ideal of $P\Pi_{n,2}$. We say $\mathcal{C}$ is \textit{split closed} if $\pi^\tau \in \mathcal{C}$ for all $\pi,\tau\in  \mathcal{C}$ such that $\tau$ splits $\pi$.
\end{defn}

\begin{defn}
	The \textit{split closure} $\overline{\mathcal{C}}$ of $\mathcal{C}$ is the smallest split closed order ideal containing $\mathcal{C}$.
\end{defn}

\begin{ex}\label{ex:split}
	Let $n=3$ and let $\mathcal{C}$ be the order ideal generated by $1|23$ and $2|3$ (see Figure \ref{fig:PPi3}). Then, the splitting $1|2|3$ must be in $\overline{\mathcal{C}}$ and therefore $\overline{\mathcal{C}} = P\Pi_{3,2}$. 
	\end{ex}
\begin{figure}\label{fig:PPi3}
		\begin{tikzpicture}[scale=.5]  
  \node[color=blue] (1p2) at (-2,2.5) {\color{blue}$\underline{1|2}$};
  \node (2p3) at (2,2.5) {\color{blue}$\underline{2|3}$};
  \node (1p3) at (0,2.5) {\color{blue}$\underline{1|3}$};
 
  \node (1p23) at (-2,5) {\color{blue}$\underline{1|23}$};
  \node (12p3) at (2,5) {$12|3$};
  \node (13p2) at (0,5) {$13|2$};
  
  \node (1p2p3) at (0,7.5) {$1|2|3$};
  \draw (1p2p3)--(1p23)--(1p2);
  \draw (1p2p3)--(12p3)--(1p3);
  \draw (1p2p3)--(13p2)--(1p2);
  \draw (1p23)--(1p3);
  \draw (12p3)--(2p3);
  \draw (13p2)--(2p3);
 
\end{tikzpicture}\caption{The order ideal generated by $1|23$ and $2|3$ from Example \ref{ex:split}}
	\end{figure}

\begin{prop}\label{prop:iffsplit}
	Let $\mathcal{C}$ be any subposet of $P\Pi_n$. A distribution $F$ satisfies the statements in $\mathcal{C}$ if and only if it satisfies the statements in $\overline{\mathcal{C}}$.
\end{prop}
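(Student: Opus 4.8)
The plan is to prove the nontrivial inclusion by showing that the collection of \emph{all} partial set partitions whose associated independence statement is satisfied by $F$ is itself a split closed order ideal containing $\mathcal{C}$. One direction is immediate: since $\mathcal{C} \subseteq \overline{\mathcal{C}}$, any $F$ satisfying every statement in $\overline{\mathcal{C}}$ in particular satisfies every statement in $\mathcal{C}$. For the converse, suppose $F$ satisfies all statements in $\mathcal{C}$ and define
\[
\mathcal{D} = \{\pi \in P\Pi_n : F \text{ satisfies the marginal independence statement corresponding to } \pi\}.
\]
By hypothesis $\mathcal{C} \subseteq \mathcal{D}$. If I can show that $\mathcal{D}$ is a split closed order ideal, then minimality of $\overline{\mathcal{C}}$ forces $\overline{\mathcal{C}} \subseteq \mathcal{D}$, i.e. $F$ satisfies every statement in $\overline{\mathcal{C}}$, which is exactly what is needed.

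First I would verify that $\mathcal{D}$ is an order ideal. Suppose $\pi \in \mathcal{D}$ and $\tau < \pi$ in $P\Pi_n$. By the definition of the order, each block of $\tau$ is contained in a disjoint union of blocks of $\pi$, so $\pi$ and $\tau$ satisfy precisely the hypotheses of Proposition \ref{prop:marginalimplication}. That proposition gives that $F$ satisfies the statement of $\tau$, hence $\tau \in \mathcal{D}$. Thus $\mathcal{D}$ is downward closed.

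The substantive step is showing that $\mathcal{D}$ is split closed; this is where the argument really lives. Let $\pi = \pi_1 | \cdots | \pi_k \in \mathcal{D}$ and $\tau = \tau_1 | \tau_2 \in \mathcal{D}$ with $\tau$ splitting $\pi$, say $\pi_i = \tau_1 \cup \tau_2$. I must show $\pi^\tau \in \mathcal{D}$. Since $|\pi^\tau| = |\pi|$, fix $x$ with $x_j = \infty$ for all $j \notin |\pi|$. Because $F$ satisfies $\pi$, we have $F(x) = \prod_{m=1}^k F(y^{(m)})$, where $y^{(m)}$ agrees with $x$ on $\pi_m$ and is $\infty$ elsewhere. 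The factor $F(y^{(i)})$ is a value of $F$ that is $\infty$ outside $\pi_i = \tau_1 \cup \tau_2$, so the statement of $\tau$ applies to it and yields $F(y^{(i)}) = F(w^{(1)}) F(w^{(2)})$, where $w^{(1)}$ and $w^{(2)}$ agree with $x$ on $\tau_1$ and $\tau_2$ respectively and are $\infty$ elsewhere. Substituting this into the product shows that $F(x)$ factors over exactly the blocks $\pi_1, \ldots, \pi_{i-1}, \tau_1, \tau_2, \pi_{i+1}, \ldots, \pi_k$ of $\pi^\tau$, so $F$ satisfies $\pi^\tau$ and $\pi^\tau \in \mathcal{D}$.

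With both closure properties established, $\mathcal{D}$ is a split closed order ideal containing $\mathcal{C}$, so $\overline{\mathcal{C}} \subseteq \mathcal{D}$ and the proof concludes. The only real obstacle is the split closed verification, and even there the difficulty is purely bookkeeping: one must check that applying the two given factorizations in sequence reproduces exactly the factorization defining $\pi^\tau$, and in particular that the domain restriction ($x_j = \infty$ off $\pi_i$) needed to invoke the statement of $\tau$ is automatically met by the intermediate term $y^{(i)}$. The packaging of the order ideal and split closed conditions into a single set $\mathcal{D}$ is what lets me avoid reasoning about the iterative construction of $\overline{\mathcal{C}}$ directly.
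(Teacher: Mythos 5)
Your proof is correct, and its two computational ingredients are exactly the paper's: downward closure of the set of satisfied statements via Proposition \ref{prop:marginalimplication}, and the factor-then-refactor substitution $F(x) = \prod_m F(y^{(m)})$, $F(y^{(i)}) = F(w^{(1)})F(w^{(2)})$ for the splitting step. Where you differ is the logical packaging, and your version is actually tighter. The paper works with $\tilde{\mathcal{C}}$, the order ideal generated by $\mathcal{C}$, proves $F$ satisfies every splitting of a pair in $\tilde{\mathcal{C}}$, and then concludes $F$ satisfies $\overline{\mathcal{C}}$; strictly speaking this handles only one round of the closure construction, since splittings can create new elements whose downsets and whose further splittings must in turn be accounted for, so an implicit induction on the iterative construction of $\overline{\mathcal{C}}$ is being suppressed. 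Your device of taking $\mathcal{D}$ to be the set of \emph{all} partial set partitions whose statements $F$ satisfies, verifying that $\mathcal{D}$ is itself a split closed order ideal, and then invoking minimality of $\overline{\mathcal{C}}$ absorbs all of that iteration in one stroke: any closure of $\mathcal{C}$ must sit inside $\mathcal{D}$. The paper's route is more constructive in flavor (it traces how statements propagate from $\mathcal{C}$ outward), while yours is the standard closure-operator argument and requires no bookkeeping about how $\overline{\mathcal{C}}$ is actually built; both rest on the same factorization computation, which you carry out correctly, including the observation that $y^{(i)}$ automatically lies in the domain where the statement of $\tau$ applies.
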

\begin{proof}
	Let $F$ be the cdf of random variables $X_1,\ldots, X_n$ satisfying $\mathcal{C}$. It follows from Proposition \ref{prop:marginalimplication} that $F$ satisfies the independence relations in the order ideal generated by $\mathcal{C}$, we will denote this order ideal by $\tilde{\mathcal{C}}$. Hence, it remains to be shown that $F$ satisfies any splitting in $\tilde{\mathcal{C}}$. Let $\pi,\tau \in \tilde{\mathcal{C}}$, such that $\tau$ splits $\pi$, in particular, let $\tau$ split $\pi_l$ for some $l\in [k]$. Then, by definition of the statement $\pi$, we have that for any $x$ with $x_i=\infty$ for all $i\in [n]\setminus (\pi_1\cup \cdots \cup \pi_k)$,
	$$F(x) = F(y^{(1)})\cdots F(y^{(l)}) \cdots  F(y^{(k)})$$
	where 
	$$y^{(j)}_i  =   \begin{cases}  
x_i  & \mbox{if }  i \in \pi_j \\ 
\infty  &  \mbox{otherwise}.
\end{cases}$$
Furthermore, since $F$ satisfies $\tau$, we have that $F(y^{(l)}) = F(w^{(1)})F(w^{(2)})$ where $$w^{(j)}_i  =   \begin{cases}  
x_i  & \mbox{if }  i \in \tau_j \\ 
\infty  &  \mbox{otherwise.}
\end{cases}$$Putting both together we obtain 
$$F(x) = F(y^{(1)})\cdots F(y^{(l-1)}) F(w^{(1)})F(w^{(2)}) F(y^{(l+1)}) \cdots  F(y^{(k)}).$$
This shows $F$ satisfies the splitting and therefore it satisfies $\overline{\mathcal{C}}$. The converse follows from $\mathcal{C} \subseteq \overline{\mathcal{C}}$.
\end{proof}

\begin{rmk}
Proposition \ref{prop:iffsplit} allows us to make a precise connection between collections of marginal independence statements $\mathcal{C}$ and certain order ideals in $P\Pi_{n,2}$, namely, their split closure $\overline{\mathcal{C}}$.	
\end{rmk}

\begin{prop}\label{prop:meet}
	Let $\mathcal{C}$ be a a split closed order ideal. If $\pi,\mu \in \mathcal{C}$ are such that $S=|\pi| = |\mu|$ for some $S\subseteq [n]$, then $\pi \wedge \mu \in \mathcal{C}$ where $\wedge$ is the common refinement in the lattice of set partitions of $S$.
	\end{prop}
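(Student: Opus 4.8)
The plan is to realize the common refinement $\pi \wedge \mu$ as the result of repeatedly splitting the blocks of $\pi$ along the blocks of $\mu$, checking at each step that the $2$-block partition driving the split already lies in $\mathcal{C}$, so that the split-closure hypothesis can be applied. Since $|\pi| = |\mu| = S$, both $\pi$ and $\mu$ are genuine set partitions of $S$, and $\pi \wedge \mu$ is the partition of $S$ whose blocks are the nonempty intersections $\pi_i \cap \mu_j$; in particular it has at least as many blocks as $\pi$, so it still lies in $P\Pi_{n,2}$. I would run an induction on the quantity $r = (\#\text{ blocks of } \pi\wedge\mu) - (\#\text{ blocks of }\pi) \ge 0$.

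The base case $r = 0$ is immediate: a refinement of $\pi$ having the same number of blocks and the same ground set must equal $\pi$ (each block of $\pi$ is a union of at least one refining block, and equal counts force exactly one), so $\pi \wedge \mu = \pi \in \mathcal{C}$.

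For the inductive step with $r \ge 1$, some block $\pi_i$ is cut by $\mu$ into at least two pieces. Writing $\mu = \mu_1|\cdots|\mu_m$, fix an index $j$ with $\emptyset \ne \pi_i \cap \mu_j \subsetneq \pi_i$, set $\tau_1 = \pi_i \cap \mu_j$ and $\tau_2 = \pi_i \setminus \mu_j$, and let $\nu = \tau_1 \mid \tau_2$. The first key claim is that $\nu < \mu$ in $P\Pi_n$: using the order of Proposition \ref{prop:leq}, take the disjoint index sets $I_1 = \{j\}$ and $I_2 = [m]\setminus\{j\}$, for which $\tau_1 \subseteq \mu_j$ and $\tau_2 \subseteq \bigcup_{t\ne j}\mu_t$ (the latter because $\pi_i \subseteq S = |\mu|$, so every element of $\tau_2$ lies in some $\mu_t$ with $t \ne j$). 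As $\mathcal{C}$ is an order ideal and $\mu \in \mathcal{C}$, this gives $\nu \in \mathcal{C}$. Moreover $\tau_1 \cup \tau_2 = \pi_i$, so $\nu$ splits $\pi$, and split-closure yields $\pi^\nu \in \mathcal{C}$.

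Finally I would check that $\pi^\nu \wedge \mu = \pi \wedge \mu$ while $\pi^\nu$ has exactly one more block than $\pi$. The block count is clear, so it remains to compare common refinements inside the lattice $\Pi_S$: from $\pi^\nu \le \pi$ one gets $\pi^\nu \wedge \mu \le \pi \wedge \mu$, and since each of $\tau_1,\tau_2$ is a union of blocks of $\pi\wedge\mu$ one gets $\pi\wedge\mu \le \pi^\nu$, hence $\pi\wedge\mu \le \pi^\nu\wedge\mu$; equality follows. Since $\pi^\nu,\mu \in \mathcal{C}$ share the ground set $S$ and the pair has discrepancy $r-1$, the inductive hypothesis gives $\pi^\nu\wedge\mu \in \mathcal{C}$, that is $\pi\wedge\mu \in \mathcal{C}$. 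I expect the only delicate point to be the bookkeeping around the single split $\nu$: one must arrange it to be simultaneously below $\mu$ (so the ideal property forces $\nu \in \mathcal{C}$) and a genuine split of $\pi$ that strictly raises the block count without altering the common refinement with $\mu$. Everything else is routine.
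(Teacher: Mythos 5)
Your proof is correct, and its core idea coincides with the paper's: realize $\pi \wedge \mu$ by splitting blocks of $\pi$ along $\mu$, using the order-ideal property to certify that each splitting partition already lies in $\mathcal{C}$. The difference is in execution, and it is worth noting. The paper forms, for each block $\pi_i$, the partition $\gamma^i = \mu \cap \pi_i$, observes $\gamma^i \leq \mu$ (hence $\gamma^i \in \mathcal{C}$), and concludes in one step that $\pi \wedge \mu = \pi^{\gamma^1 \cdots \gamma^k} \in \mathcal{C}$. This implicitly invokes splitting by partitions with possibly more than two blocks, an operation not literally covered by the definition of split closure (Definition \ref{def:split1} only licenses splits by $2$-block partitions) and only formalized later in Definition \ref{def:gensplit}; making the paper's argument airtight requires an iteration that is left implicit. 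Your induction on the discrepancy $r$ between block counts does exactly that iteration: at each step you split by the single $2$-block partition $\nu = (\pi_i \cap \mu_j) \mid (\pi_i \setminus \mu_j)$, verify $\nu \leq \mu$ via explicit index sets so that $\nu \in \mathcal{C}$, apply split closure as literally defined, and check the two bookkeeping facts ($\pi^\nu \wedge \mu = \pi \wedge \mu$ and the block count rises by exactly one) that make the induction close. So your route is the same in spirit but strictly more careful with respect to the definitions actually available at this point in the paper; what it costs in bookkeeping it buys back in rigor.
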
	
	
	\begin{proof}
		Let $\pi,\mu \in \mathcal{C}$ be such that $S=|\pi| = |\mu|$ for some $S\subseteq [n]$. Observe that $\pi \wedge \mu = \tau$ where for each $i$, $\tau_i = \pi_r\cap \mu_t$ for some $r,t$. Now consider the partition $\gamma^i = \mu \cap \pi_i = \mu_1\cap \pi_i|\cdots | \mu_l \cap \pi_i$, by definition, either $\gamma^i = \pi_i \subseteq \mu_j$ for some $j$ or $\gamma^i \leq \mu$. Observe further that we can write $\tau = \gamma^1|\cdots|\gamma^k$. If for all $i$, we have that $\gamma^i = \pi_i$, then $\tau = \pi \in \mathcal{C}$. On the other hand, if $\gamma^i\leq \mu$, clearly $\gamma^i \in \mathcal{C}$ and since $\mathcal{C}$ is a split closed order ideal, so will $\pi^{\gamma^i}$. Thus, $\tau = \pi^{\gamma^1\cdots \gamma^k} \in \mathcal{C}$
	\end{proof}

	We finish this section stating a general theorem about marginal independence models that makes the correspondence between split closed order ideals and models precise. We prove this theorem in Section \ref{sec:discretemods}.
	
	\begin{thm}\label{thm:bijection}
	Marginal independence models $\mathcal{M}_\mathcal{C}$ on $n$ random variables are in bijective correspondence to split closed order ideals $\mathcal{C} \subseteq P\Pi_{n,2}$.
	\end{thm}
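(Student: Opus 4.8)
The plan is to exhibit the correspondence as a pair of mutually inverse maps and to concentrate the entire difficulty in one separation statement that the toric theorem of Section~\ref{sec:discretemods} is designed to supply. In one direction, send a split closed order ideal $\mathcal{C}\subseteq P\Pi_{n,2}$ to its model $\mathcal{M}_{\mathcal{C}}$, the set of distributions satisfying every statement in $\mathcal{C}$. In the other direction, send a marginal independence model $\mathcal{M}$ to the set
\[
\mathcal{I}(\mathcal{M}) = \{\pi \in P\Pi_{n,2} : \text{every } F \in \mathcal{M} \text{ satisfies } \pi\}.
\]
First I would check that $\mathcal{I}(\mathcal{M})$ is always a split closed order ideal: it is downward closed by Proposition~\ref{prop:marginalimplication}, since every statement implied by a satisfied statement is again satisfied by all $F\in\mathcal{M}$, and it is split closed because the splitting of a satisfied statement is satisfied, exactly as in the proof of Proposition~\ref{prop:iffsplit}. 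Thus both maps land in the correct sets, and it remains to show they are mutually inverse.

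One composition is immediate. Given any model $\mathcal{M} = \mathcal{M}_{\mathcal{C}_0}$ for some list $\mathcal{C}_0$ of statements, Proposition~\ref{prop:iffsplit} gives $\mathcal{M}_{\mathcal{C}_0} = \mathcal{M}_{\overline{\mathcal{C}_0}}$, so every model is the model of a split closed order ideal; this yields surjectivity of $\mathcal{C}\mapsto\mathcal{M}_{\mathcal{C}}$ and, once the other composition is established, the identity $\mathcal{M}_{\mathcal{I}(\mathcal{M})} = \mathcal{M}$. For the other composition I must prove $\mathcal{I}(\mathcal{M}_{\mathcal{C}}) = \mathcal{C}$ for every split closed order ideal $\mathcal{C}$. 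The inclusion $\mathcal{C}\subseteq\mathcal{I}(\mathcal{M}_{\mathcal{C}})$ holds by definition of $\mathcal{M}_{\mathcal{C}}$, so the content of the theorem is the reverse inclusion: no statement outside $\mathcal{C}$ is forced on the whole model.

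To prove $\mathcal{I}(\mathcal{M}_{\mathcal{C}})\subseteq\mathcal{C}$ it suffices, for each $\pi\notin\mathcal{C}$, to produce a single distribution in $\mathcal{M}_{\mathcal{C}}$ that violates $\pi$; and because discrete distributions sit inside all distributions, it is enough to find such a distribution among discrete ones. Here I would invoke the toric theorem of Section~\ref{sec:discretemods}: after the change to cdf coordinates the discrete model attached to $\mathcal{C}$ is a smooth toric variety, hence irreducible, and it carries an explicit monomial parametrization. The statement $\pi = \pi_1|\cdots|\pi_k$ is equivalent to the vanishing on $\mathcal{M}_{\mathcal{C}}$ of the factorization relation $F(x) - \prod_j F(y^{(j)})$, a single binomial in cdf coordinates. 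By irreducibility, either this binomial vanishes identically on the model---forcing $\pi$ to hold everywhere---or it is nonzero at a generic, and hence at some genuine discrete, point of the model, which then violates $\pi$. Thus $\pi\in\mathcal{I}(\mathcal{M}_{\mathcal{C}})$ if and only if the $\pi$-binomial lies in the toric ideal of the model, and I would finish by showing this happens precisely when $\pi\in\mathcal{C}$.

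The main obstacle is exactly this last equivalence: identifying the set of marginal independence statements that vanish identically on the toric variety and matching it, on the nose, with $\mathcal{C}$. The implication "$\pi\in\mathcal{C}\Rightarrow$ the $\pi$-binomial vanishes" is built into the model, so the real work is the contrapositive, that $\pi\notin\mathcal{C}$ forces the $\pi$-factorization to fail on the parametrization. I expect to prove this by reading off, from the explicit cdf-coordinate monomial map produced in Section~\ref{sec:discretemods}, which factorizations the parametrization enforces, and then using split-closedness of $\mathcal{C}$ (together with Propositions~\ref{prop:marginalimplication} and~\ref{prop:meet}) to show that the enforced factorizations are generated by exactly those $\pi$ lying in $\mathcal{C}$. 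Equivalently, one shows that if $\pi\notin\mathcal{C}$ then the monomials assigned to $F(x)$ and to $\prod_j F(y^{(j)})$ differ, so a generic choice of torus parameters separates them. This is the step where the combinatorics of split closed order ideals must be translated faithfully into the geometry of the toric variety, and it is where the bijection is genuinely earned.
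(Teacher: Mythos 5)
Your overall architecture is the same as the paper's: the forward direction is Proposition~\ref{prop:iffsplit}, and the entire difficulty is concentrated in the separation statement that for each $\pi \notin \mathcal{C}$ some distribution in $\mathcal{M}_{\mathcal{C}}$ violates $\pi$, which both you and the paper extract from the discrete (binary) toric theory in cdf coordinates. Where you genuinely differ is the mechanism for that step. The paper's Proposition~\ref{prop:modexist} adds the binomial $q_{|\pi|} - q_{\pi_1}\cdots q_{\pi_k}$ for a minimal $\pi \notin \mathcal{C}$ to $J_{\mathcal{C}}$, notes that its leading term $q_{|\pi|}$ is a variable not occurring among the leading terms of $J_{\mathcal{C}}$ (because $|\pi|$ is connected with respect to $\mathcal{C}$), so the initial ideal grows, the dimension strictly drops, and $V(J_{\mathcal{C}})\setminus V(I)\neq\emptyset$. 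You instead invoke irreducibility plus the monomial parametrization of Corollary~\ref{prop:par}, reducing to the claim that for $\pi\notin\mathcal{C}$ the two sides of the factorization receive different monomials in the parameters $\theta^{(C)}$. You leave this as a plan, but it does go through, and the right ingredient is Proposition~\ref{prop:decomp} (not really Propositions~\ref{prop:marginalimplication} and~\ref{prop:meet}, which you cite): if $|\pi|$ is connected, the variable $\theta^{(|\pi|)}$ occurs on the left side but cannot occur on the right, since every factor there involves only $\theta^{(C)}$ with $C\subseteq\pi_j\subsetneq|\pi|$; if $|\pi|$ is disconnected, equality of the (squarefree) monomials forces each maximal connected component of $|\pi|$, i.e.\ each block of the maximal partition $\tau\in\mathcal{C}$ of Proposition~\ref{prop:decomp}, to lie inside a single block of $\pi$, whence $\pi\le\tau$ and $\pi\in\mathcal{C}$ because $\mathcal{C}$ is an order ideal, a contradiction. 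Your route buys a cleaner statement ($\pi\in\mathcal{C}$ if and only if the $\pi$-binomial lies in the toric ideal) and avoids Gr\"obner bases; the paper's route avoids having to verify that the parametrization exactly cuts out $V(J_{\mathcal{C}})$.

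The one genuine gap is the phrase ``nonzero at a generic, and hence at some genuine discrete, point of the model.'' Most points of the affine variety $V(J_{\mathcal{C}})$ are not probability distributions, so generic nonvanishing does not immediately produce a distribution violating $\pi$; you need the distributions in the model to be Zariski dense in $V(J_{\mathcal{C}})$, or at least not contained in the hypersurface cut out by your binomial. The paper supplies exactly this with the uniform distribution together with smoothness of $V(J_{\mathcal{C}})$. In your setup the cleanest repair is to observe that $V(J_{\mathcal{C}})$ is the graph of the monomial map over the free coordinates $\theta^{(C)}$, $C$ connected, take $\theta^{(C)} = 2^{-|C|}$ (the uniform distribution, an interior point of the simplex in cdf coordinates), and perturb the free parameters in a small real neighborhood: by continuity of the inverse M\"obius transform all nearby parameter choices still yield genuine distributions in $\mathcal{M}_{\mathcal{C}}$, and a nonzero polynomial in the $\theta^{(C)}$ cannot vanish on a Euclidean open set. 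With that inserted, your proof is complete.
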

	
In this level of generality, it is not clear how to prove the correspondence yet. The forward direction follows from Proposition \ref{prop:iffsplit}. We have that for any marginal independence model, there exists a split closed order ideal by taking all implied marginal
independence statements (take the order ideal generated by the independence statements first and then include every possible splitting). 
For the other direction, it suffices to show the existence of 
probability distributions that satisfy exactly the independence statements 
in a given split closed order ideal, and no others. 
We will derive such result and present a full proof as a consequence of our study of marginal independence for
discrete random variables in Section \ref{sec:discretemods}.
\medskip
\section{Axiomatic characterization of independence}\label{sec:axiom}
In this section, we frame our results in terms of the axiomatic characterization of probabilistic marginal independence, following the results in \cite{geiger}. That paper provides a set of axioms that characterize independence using pairs of disjoint sets (in our case, partitions with only 2 blocks) and shows that the axioms are sound and complete. We show analogous results for our case. 

\begin{defn}
	Let $P$ be a class of probability distributions, $A$ be a set of axioms, $\sigma$ a statement and $\Sigma$ a set of statements. 
	
	\begin{enumerate}
	\item A statement $\sigma$ is logically implied by $\Sigma$, denoted $\Sigma \imps \sigma$ if every distribution in $P$ that satisfies $\Sigma$ also satisfies $\sigma$.
	\item We say $\sigma$ is derivable from $\Sigma$, denoted $\Sigma\impa\sigma$ if $\sigma$ can be derived from the axioms in $A$.
	\item $\cl_A(\Sigma)$ is the set of all statements derivable from $\Sigma$.
	\end{enumerate}

\end{defn}

\begin{defn}
	A set of axioms $A$ is \emph{sound} in $P$ if and only if for every statement $\sigma$ and every set of statements $\Sigma$
	$$\mbox{if $\Sigma \impa \sigma$ then } \Sigma \imps \sigma .$$
	The set $A$ is complete for $P$ if and only if
	$$\mbox{if $\Sigma \imps \sigma$ then } \Sigma \impa \sigma .$$
\end{defn}

\begin{prop}[\cite{geiger}]\label{prop:complete}
	A set of axioms $A$ is complete if and only if for every set of statements $\Sigma$ and every statement $\sigma \notin \cl_A(\Sigma)$, there exists a distribution $P_\sigma$ in $P$ that satisfies $\Sigma$ and does not satisfy $\sigma$.
\end{prop}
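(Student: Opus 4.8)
The plan is to observe that the asserted condition is nothing more than the contrapositive of completeness, once both sides are unwound through the definitions. First I would record two translations. By definition of $\cl_A$, the hypothesis $\sigma \notin \cl_A(\Sigma)$ is exactly the statement that $\sigma$ is \emph{not} derivable from $\Sigma$, that is, $\Sigma \impa \sigma$ fails. On the other side, the conclusion ``there exists $P_\sigma \in P$ satisfying $\Sigma$ but not $\sigma$'' is precisely the negation of logical implication: it asserts that it is \emph{not} the case that every distribution in $P$ satisfying $\Sigma$ also satisfies $\sigma$, that is, $\Sigma \imps \sigma$ fails. Thus the right-hand side of the biconditional says exactly that the failure of $\Sigma \impa \sigma$ forces the failure of $\Sigma \imps \sigma$, for all $\Sigma$ and $\sigma$. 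This is the contrapositive of the implication
\[
\Sigma \imps \sigma \;\Longrightarrow\; \Sigma \impa \sigma \qquad \text{for all } \Sigma, \sigma,
\]
which is precisely the definition of completeness.

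With these translations in hand, both directions are immediate. For the forward direction I would assume $A$ is complete and argue by contradiction: if some $\Sigma$ and $\sigma \notin \cl_A(\Sigma)$ admitted no separating distribution, then every distribution in $P$ satisfying $\Sigma$ would satisfy $\sigma$, giving $\Sigma \imps \sigma$; completeness then forces $\Sigma \impa \sigma$, that is $\sigma \in \cl_A(\Sigma)$, contradicting the choice of $\sigma$. For the converse I would assume the separation condition and verify completeness directly: given $\Sigma \imps \sigma$, were $\sigma \notin \cl_A(\Sigma)$ the hypothesis would produce a $P_\sigma \in P$ satisfying $\Sigma$ but not $\sigma$, contradicting $\Sigma \imps \sigma$; hence $\sigma \in \cl_A(\Sigma)$, that is $\Sigma \impa \sigma$.

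There is no genuine mathematical obstacle here—the proposition is a purely logical reformulation, and the only care required is in correctly negating the quantified implication that defines $\imps$. The substantive content lies not in this equivalence but in the later sections, where one must actually construct, for each split closed order ideal and each partial set partition outside it, a probability distribution realizing exactly the prescribed independences and no others; this proposition merely repackages completeness into the form that such a construction will verify.
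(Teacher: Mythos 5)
Your proof is correct. Note that the paper itself gives no proof of this proposition at all: it is stated as a result imported from \cite{geiger}, so there is no internal argument to compare against. Your derivation supplies exactly the routine content such a proof requires: identifying $\sigma \notin \cl_A(\Sigma)$ with the failure of $\Sigma \impa \sigma$, identifying the existence of a separating distribution $P_\sigma$ with the negation of the universally quantified implication defining $\Sigma \imps \sigma$, and then observing that the biconditional is the contrapositive of the definition of completeness. Both directions of your argument are sound, and the edge case where no distribution in $P$ satisfies $\Sigma$ (so that $\Sigma \imps \sigma$ holds vacuously for every $\sigma$) is handled automatically by the purely logical form of your reasoning: in that case completeness forces $\cl_A(\Sigma)$ to contain every statement, and the separation condition is likewise vacuously consistent only under that same circumstance. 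Your closing observation is also the right one: the equivalence itself is contentless repackaging, and the mathematical weight falls on Proposition \ref{prop:modexist} and Theorem \ref{thm:bijection}, where the paper actually constructs distributions realizing a given split closed order ideal and nothing more.
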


\begin{thm}
	Let $\Sigma \subseteq P\Pi_n$ , let $A$ be the following set of axioms:
	\begin{align*}
		\mbox{Decomposition } \qquad & \pi_1|\cdots | \pi_k \implies \tau_1 | \cdots |\tau_k, \forall \tau_i \subseteq \pi_i\\
		\mbox{Join } \qquad & \pi_1|\cdots | \pi_k \implies \pi_1 | \cdots|\hat{\pi_i}|\cdots | \hat{\pi_j} | \cdots  |\pi_k| \pi_i\cup \pi_j, \forall i,j \in [n]\\
		\mbox{Splitting } \qquad & \pi_1|\cdots | \pi_k \mbox{ and } \tau \mbox{ splits } \pi \implies \pi^\tau
	\end{align*}
	Then $A$ is sound and complete in the set of all probability distributions. Furthermore, $\cl_A(\Sigma) = \overline{\Sigma}$. 
\end{thm}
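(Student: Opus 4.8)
The plan is to reduce all three claims to the single combinatorial identity $\cl_A(\Sigma) = \overline{\Sigma}$, after which soundness falls out of Proposition \ref{prop:iffsplit} and completeness out of Proposition \ref{prop:complete} together with the existence result behind Theorem \ref{thm:bijection}. So I would establish the identity first, by double inclusion, by translating each axiom into a poset operation. Throughout I read both $\cl_A$ and $\overline{\,\cdot\,}$ inside $P\Pi_{n,2}$: Join and Decomposition can produce statements with fewer than two blocks, but these are vacuous independence statements and are discarded.

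For $\overline{\Sigma} \subseteq \cl_A(\Sigma)$ I would show that $\cl_A(\Sigma)$ is itself a split closed order ideal containing $\Sigma$; since $\overline{\Sigma}$ is by definition the smallest such, the inclusion follows. That $\Sigma \subseteq \cl_A(\Sigma)$ is immediate. To see it is an order ideal, recall that the covering relations of $P\Pi_n$ are of exactly two kinds: removing a single element from a block, and merging two blocks. The first downward step is an instance of Decomposition (take $\tau_i = \pi_i \setminus \{e\}$ and $\tau_t = \pi_t$ otherwise), and the second is an instance of Join. Since the order relation is the transitive closure of covering relations and $\cl_A(\Sigma)$ is closed under axiom application, iterating these two axioms derives every $\tau < \pi$ from $\pi$, so $\cl_A(\Sigma)$ is downward closed. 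Finally, split-closedness of $\cl_A(\Sigma)$ is precisely the content of the Splitting axiom. For the reverse inclusion $\cl_A(\Sigma) \subseteq \overline{\Sigma}$ I would check that $\overline{\Sigma}$ is closed under each axiom; its minimality (as the closure under the axioms) then gives the inclusion. Both Decomposition and Join produce from $\pi$ a partial set partition $\tau \le \pi$ (using $I_i = \{i\}$ in the Decomposition case and merging the two indices into a single $I$ in the Join case, as in Proposition \ref{prop:leq}), so $\tau \in \overline{\Sigma}$ because $\overline{\Sigma}$ is an order ideal; closure under Splitting holds since $\overline{\Sigma}$ is split closed by definition.

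With the identity in hand, soundness is immediate: if $\Sigma \impa \sigma$ then $\sigma \in \cl_A(\Sigma) = \overline{\Sigma}$, and Proposition \ref{prop:iffsplit} guarantees that any distribution satisfying $\Sigma$ satisfies every statement of $\overline{\Sigma}$, so $\Sigma \imps \sigma$. Equivalently, one can verify each axiom directly against the cdf definition: Decomposition is Lemma \ref{lem:marginalimpl}(2), Join is the special case of Lemma \ref{lem:marginalimpl}(1) with $S,T$ the two merged indices, and Splitting is exactly the implication established inside the proof of Proposition \ref{prop:iffsplit}.

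For completeness I would invoke Proposition \ref{prop:complete}: it suffices to produce, for each $\sigma \notin \cl_A(\Sigma) = \overline{\Sigma}$, a distribution satisfying $\Sigma$ but not $\sigma$. Here I would use the existence result underlying Theorem \ref{thm:bijection}, which supplies a single distribution $F$ whose set of satisfied marginal independence statements is \emph{exactly} $\overline{\Sigma}$. Such an $F$ satisfies $\Sigma \subseteq \overline{\Sigma}$ and simultaneously fails every $\sigma \notin \overline{\Sigma}$, so it serves as the required witness (uniformly in $\sigma$). The main obstacle is precisely this existence statement, namely constructing a distribution that realizes a prescribed split closed order ideal and no extra independencies; this is the hard direction of Theorem \ref{thm:bijection}, carried out through the toric/cdf-coordinate analysis of Section \ref{sec:discretemods}. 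Everything else in the theorem is a formal consequence of the poset identity and the already-established Propositions \ref{prop:iffsplit} and \ref{prop:complete}.
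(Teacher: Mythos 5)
Your proposal is correct and follows essentially the same route as the paper's own proof: establish $\cl_A(\Sigma) = \overline{\Sigma}$ by matching Decomposition and Join to the downward covering relations of $P\Pi_n$ and Splitting to split closure, then derive soundness from Proposition \ref{prop:iffsplit} and completeness from Proposition \ref{prop:complete} together with the existence result behind Theorem \ref{thm:bijection}. The only difference is that you spell out the double inclusion and the vacuous-statement caveat explicitly, where the paper states these steps tersely.
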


\begin{proof}
	First observe that $\cl_A(\Sigma)$ is an order ideal. This follows from the Decomposition and the Join axiom put together, as they both match the covering relations in $P\Pi_{n}$. Then it follows from the splitting axiom that $\cl_A(\Sigma) = \overline{\Sigma}$. Completeness of $A$ follows from Proposition \ref{prop:complete} and Theorem \ref{thm:bijection}. 
	
	The proof of soundness follows from Proposition \ref{prop:iffsplit} and the observation that $\cl_A(\Sigma) = \overline{\Sigma}$.
\end{proof}

In contrast, the work done by Geiger, Paz and Pearl in \cite{geiger} uses the base structure of pairs of disjoint subsets with analogous results for soundness and completeness:

\begin{thm}[\cite{geiger}]
	Let $R,S,T \subseteq [n]$ be disjoint. Let $B$ be the set of axioms
\begin{align*}
		\mbox{Trivial independence } \qquad & R \ind \emptyset\\
		\mbox{Symmetry } \qquad & R\ind S \implies S\ind R\\
		\mbox{Decomposition } \qquad & R\ind S\cup T \implies R\ind S\\
		\mbox{Mixing } \qquad & R\ind S \mbox{ and } R\cup S \ind T \implies R\ind S\cup T
	\end{align*}
Then $B$ is complete and sound in the set of all probability distributions.
\end{thm}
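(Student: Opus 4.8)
The plan is to reduce the entire statement to the single structural identity $\cl_A(\Sigma) = \overline{\Sigma}$, from which soundness and completeness follow by short appeals to Proposition~\ref{prop:iffsplit}, Proposition~\ref{prop:complete}, and Theorem~\ref{thm:bijection}. The conceptual point driving everything is that the three axioms correspond exactly to the two downward covering relations of $P\Pi_n$ (Decomposition and Join) together with the split-closure operation (Splitting).

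I would prove the identity by two inclusions. For $\overline{\Sigma} \subseteq \cl_A(\Sigma)$, I would show that $\cl_A(\Sigma)$ is itself a split closed order ideal containing $\Sigma$; since $\overline{\Sigma}$ is the smallest such, the inclusion is immediate. Down-closure is the substantive part: reading the covering relations downward, deleting one element from a block of size at least two is the instance of Decomposition with $\tau_j=\pi_j$ for $j\ne i$ and $\tau_i=\pi_i\setminus\{x\}$, and merging two blocks is exactly Join, while deleting an entire singleton block---which is not a covering step---factors as a Join followed by such a Decomposition. Hence any $\mu<\pi$ is reached from $\pi$ along a saturated chain of Decomposition and Join steps, so $\cl_A(\Sigma)$ is an order ideal; it is split closed directly by the Splitting axiom. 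For the reverse inclusion $\cl_A(\Sigma)\subseteq\overline{\Sigma}$, I would induct on derivation length, checking that each axiom preserves membership in $\overline{\Sigma}$: Decomposition and Join send a statement to one strictly below it and so remain in the order ideal $\overline{\Sigma}$, and Splitting remains in $\overline{\Sigma}$ because the latter is split closed.

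Granting $\cl_A(\Sigma)=\overline{\Sigma}$, soundness is direct: if $\sigma\in\cl_A(\Sigma)=\overline{\Sigma}$ then by Proposition~\ref{prop:iffsplit} every distribution satisfying $\Sigma$ satisfies all of $\overline{\Sigma}$, hence $\sigma$. For completeness I would apply Proposition~\ref{prop:complete}: given $\sigma\notin\cl_A(\Sigma)=\overline{\Sigma}$, Theorem~\ref{thm:bijection} supplies a distribution whose satisfied statements are exactly those of the split closed order ideal $\overline{\Sigma}$, and this distribution satisfies $\Sigma\subseteq\overline{\Sigma}$ but fails $\sigma$. The main obstacle is the identity itself---precisely, verifying that Decomposition and Join generate all downward order relations (including the non-covering block-deletion move) while the upward Splitting steps never escape the split closure. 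Once this is secured the probabilistic content is entirely outsourced to the cited propositions and to Theorem~\ref{thm:bijection}, whose underlying construction of a distribution realizing a prescribed split closed order ideal exactly (established in Section~\ref{sec:discretemods}) is what completeness ultimately rests on.
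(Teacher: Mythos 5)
You have proved the wrong statement. The theorem at hand is the Geiger--Paz--Pearl result, quoted by the paper from \cite{geiger}, about the axiom system $B$ whose statements are pairwise independences $R \ind S$ and whose axioms are Trivial independence, Symmetry, Decomposition, and Mixing. Your entire argument instead concerns the paper's own axiom system $A$ (Decomposition, Join, Splitting) acting on partial set partitions, and its conclusion is the identity $\cl_A(\Sigma)=\overline{\Sigma}$ together with soundness and completeness of $A$. That is the paper's preceding (unnumbered) theorem, not this one: none of Trivial independence, Symmetry, or Mixing appears anywhere in your proposal, and the closure operator relevant here is defined by derivations that never leave the world of $2$-block statements.

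The gap is not merely nominal, because the $B$-theorem does not follow formally from the $A$-theorem by your argument. Soundness of $B$ could indeed be assembled from the paper's results (each $B$ axiom is a probabilistically valid implication; for instance Mixing is simulated by Splitting followed by Join, as the paper's remark observes). But completeness of $B$ requires the converse simulation: if a pairwise statement $\sigma$ lies in $\overline{\Sigma}$ (equivalently, by Proposition \ref{prop:iffsplit} and Theorem \ref{thm:bijection}, is probabilistically implied by $\Sigma$), then $\sigma$ must be derivable from $\Sigma$ using only $B$'s axioms, whose intermediate statements are all pairs of disjoint sets. Your identity $\cl_A(\Sigma)=\overline{\Sigma}$ says nothing about whether the multi-block detours that arise in split closures (e.g., passing through $R|S|T$) can always be eliminated in favor of Symmetry, Decomposition, and Mixing steps; establishing that is precisely the combinatorial content of \cite{geiger}. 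Note also that the paper itself offers no proof of this theorem---it is stated only for comparison with the authors' own axioms---so the correct response here is either to reproduce the argument of \cite{geiger} or to supply the missing simulation lemma reducing $B$-completeness to the machinery of split closed order ideals; your proposal does neither.
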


\begin{rmk}
Comparing the set of axioms $A$ to $B$ in their respective structures, we see that the Decomposition follows the same principle in both. $A$ does not need Symmetry, as set partitions are unordered. Furthermore, the Mixing axiom can be derived from the Join and Splitting axioms in $A$: Observe that in the language of set partitions, if $R|S$ and $R\cup S |T$, by the Splitting axiom, we have $R|S|T$, and then by the Join axiom, it follows that $R|S\cup T$.
\end{rmk}

In our approach, we take advantage of the poset structure to describe marginal independence, which allows for a more intuitive set of axioms. The Join and Splitting axioms come up naturally in the study of marginal independence. In addition, conveying independence information can be more complicated when we only allow pairs of subsets, in particular when we want to convey information about total independence of more than 2 subsets, whereas the poset structure allows us to find maximal generators of the split closed order ideals that work as representatives of the set of independence relations. 

We now tackle the following problem: given a set of statements $\mathcal{C} \subseteq P\Pi_{n,2}$, and a statement $\sigma$, we want to decide (in polynomial time) if $\sigma$ is implied by $\mathcal{C}$. Or, equivalently, if $\sigma \in \overline{\mathcal{C}}$. We begin by extending the operation of splitting to any 2 set partitions in $P\Pi_{n}$.
\begin{defn}\label{def:gensplit}Let $\pi,\gamma,\sigma \in P\Pi_n$.
\begin{enumerate}
	\item For $S \subseteq [n]$, define $\pi \cap S = \pi_1 \cap S | \cdots |\pi_k\cap S$, deleting any empty blocks.
	\item We define the splitting of $\pi$ by $\gamma$ as $\pi^\gamma$, where $\pi^\gamma$ is obtained from replacing $\pi_i$ with $\gamma\cap \pi_i$ if $|\gamma \cap \pi_i| = \pi_i$.
	\item Let $\mathcal{C} = \left\{\pi^{(1)}, \ldots, \pi^{(k)}\right\}\subseteq P\Pi_{n,2}$. We define the splitting map 
	\begin{align*}
		\mathcal{S}_\mathcal{C}: P\Pi_n &\rightarrow P\Pi_n\\
		\sigma &\mapsto \left(\left(\sigma^{\pi^{(1)}}\right)^{\cdot^{\cdot^\cdot}}\right)^{ \pi^{(k)}}.
	\end{align*}
	\item We define the splitting of $\sigma$ by $\mathcal{C}$ as $\sigma^\mathcal{C} = \mathcal{S}_\mathcal{C}^N(\sigma)$, where $N$ is such that $\mathcal{S}_\mathcal{C}^N(\sigma) = \mathcal{S}_\mathcal{C}^{N+1}(\sigma)$.
\end{enumerate}
\end{defn}
\begin{rmk}
In the case where $\gamma$ has 2 blocks and $|\gamma| = \pi_i$ for some $i$, Definition \ref{def:gensplit} (2) agrees with Definition \ref{def:split1}. Furthermore, if $\pi^\gamma = \pi$, we say $\gamma$ does not split $\pi$.

Observe that $N$ in Definition \ref{def:gensplit} (4) is guaranteed to exist, as $P\Pi_n$ is finite and $\sigma \leq \mathcal{S}_\mathcal{C}(\sigma)$. The fact that $\sigma^{\mathcal{C}}$ is well-defined follows from Proposition \ref{prop:corralg}.
\end{rmk}
\begin{ex}
	Let $\pi = 12|345|67$ and $\gamma = 1|4|356|7$. Then we have 
	\begin{align*}
		\gamma \cap \pi_1 &= 1, \; |\gamma \cap \pi_1| \neq \pi_1\\
		\gamma \cap \pi_2 &= 4|35, \; |\gamma \cap \pi_2| = \pi_2\\
		\gamma \cap \pi_3 &= 6|7, \; |\gamma \cap \pi_3| = \pi_3
	\end{align*}
	Hence, $\pi^\gamma$ is obtained by taking $\pi$ and replacing $\pi_2$ and $\pi_3$ with $\gamma \cap \pi_2$ and $\gamma \cap \pi_3$, respectively. $\pi^\gamma = 12|4|35|6|7$.
\end{ex}
\begin{prop}\label{prop:corralg}
	Let $\mathcal{C} = \left\{\pi^{(1)}, \ldots, \pi^{(k)}\right\}\subseteq P\Pi_{n,2}$, and let $D\subseteq|\pi^{(i)}| $ for some $i$. Then, $\gamma= D^\mathcal{C}$ is the unique maximal element in $\overline{\mathcal{C}}$ such that $|\gamma| = D$. 
\end{prop}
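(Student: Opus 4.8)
The plan is to first read off the poset order via Proposition~\ref{prop:leq}: among elements with a fixed ground set $D$, the relation $\mu \le \nu$ means precisely that $\nu$ refines $\mu$ as a set partition of $D$, so the ``maximal element with $|\gamma| = D$'' we are after is the \emph{finest} partition of $D$ lying in $\overline{\mathcal{C}}$. Writing $\Gamma_D = \{\mu \in \overline{\mathcal{C}} : |\mu| = D\}$, I would establish the statement in three moves: (i) $D^{\mathcal{C}} \in \overline{\mathcal{C}}$ and $|D^{\mathcal{C}}| = D$; (ii) $\Gamma_D$, when nonempty, has a greatest element $\gamma^\ast$; and (iii) $D^{\mathcal{C}} = \gamma^\ast$. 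The ground-set claim in (i) is immediate, since every operation in Definition~\ref{def:gensplit} only subdivides existing blocks and never alters $\cup_i \pi_i$.

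For membership in (i), I would track the iterated splitting $D = \sigma_0, \sigma_1, \ldots, \sigma_M = D^{\mathcal{C}}$ and realize each elementary refinement as a legal splitting inside $\overline{\mathcal{C}}$. The first step that actually subdivides the single block $D$ must be a splitting by some generator $\pi^{(i_0)}$ with $D \subseteq |\pi^{(i_0)}|$, producing $\pi^{(i_0)} \cap D$, which satisfies $\pi^{(i_0)} \cap D \le \pi^{(i_0)}$ and hence already lies in $\overline{\mathcal{C}}$. Thereafter, refining a block $\beta$ of the current partition into $\pi^{(j)} \cap \beta = \rho_1|\cdots|\rho_p$ factors into $p-1$ two-block splittings in the sense of Definition~\ref{def:split1}, each splitter being a coarsening of a restriction of $\pi^{(j)}$ and therefore an element of $\overline{\mathcal{C}}$; since $\overline{\mathcal{C}}$ is split closed, the result stays in $\overline{\mathcal{C}}$. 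If $D^{\mathcal{C}}$ has a single block then no such refinement occurs, and this degenerate case will be absorbed into the next step, where it corresponds to $\Gamma_D = \emptyset$.

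The heart of the argument, and the step I expect to be the main obstacle, is the claim: \emph{if $D^{\mathcal{C}}$ is a single block, then $\Gamma_D = \emptyset$.} I would prove this by induction on the stages $\mathcal{C}_0 \subseteq \mathcal{C}_1 \subseteq \cdots$ of the split-closure construction, where $\mathcal{C}_0$ is the order ideal generated by $\mathcal{C}$ and each $\mathcal{C}_{s+1}$ adjoins all splittings and re-closes downward. Assuming $\Gamma_D \neq \emptyset$, pick the earliest stage carrying a partition $\nu$ with $|\nu| = D$ and at least two blocks. If this stage is positive, tracing $\nu$ back through a downward step (the witness being the restriction $\nu' \cap D$, which stays in the order ideal and refines $\nu$) or through a splitting step (where a splitting of a one-block partition of $D$ forces a two-block splitter of ground set $D$) always exhibits a two-block partition of ground set $D$ one stage earlier, contradicting minimality. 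Hence $\nu \in \mathcal{C}_0$, so $\nu \le \pi^{(j)}$ for some generator; then $\pi^{(j)} \cap D$ refines $\nu$ and has at least two blocks, so $D$ straddles two blocks of $\pi^{(j)}$. But then the first pass of the splitting map subdivides $D$, so $D^{\mathcal{C}}$ is not a single block, a contradiction.

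Finally I would assemble (ii) and (iii). By Proposition~\ref{prop:meet}, $\Gamma_D$ is closed under common refinement, so when nonempty its total meet $\gamma^\ast = \bigwedge \Gamma_D$ lies in $\Gamma_D$ and is its finest, hence greatest, element. Combining (i) with the claim shows that $D^{\mathcal{C}}$ has at least two blocks and lies in $\Gamma_D$, whence $\gamma^\ast$ refines $D^{\mathcal{C}}$. Were this refinement strict, some block $\beta$ of $D^{\mathcal{C}}$ would be split by $\gamma^\ast$, giving $\gamma^\ast \cap \beta \in \Gamma_\beta$ with at least two blocks. But $D^{\mathcal{C}}$ is stable under $\mathcal{S}_\mathcal{C}$ by construction, so no generator subdivides $\beta$, and running the splitting map from the single block $\beta$ never splits; thus $\beta^{\mathcal{C}} = \beta$ is a single block, and the claim applied to $\beta$ forces $\Gamma_\beta = \emptyset$, contradicting $\gamma^\ast \cap \beta \in \Gamma_\beta$. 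Therefore $\gamma^\ast = D^{\mathcal{C}}$, which is the unique greatest, and in particular unique maximal, element of $\overline{\mathcal{C}}$ with ground set $D$; the degenerate single-block case is exactly the one where $\Gamma_D$ is empty.
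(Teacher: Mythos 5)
Your proposal is correct, and at the top level it rests on the same two pillars as the paper's proof: the stability of $D^{\mathcal{C}}$ under the splitting map rules out anything strictly above it, and Proposition \ref{prop:meet} (closure under common refinement) gives uniqueness. Where you genuinely diverge is in rigor at the critical step. The paper dismisses the possibility that $\tau_1|\tau_2 \in \overline{\mathcal{C}}$ with the one-line remark that none of the $\pi^{(i)}$ \emph{or the elements below them} split $\gamma_1$ --- which tacitly assumes every two-block element of the split closure is dominated by a generator; since $\overline{\mathcal{C}}$ is built by iterating splittings and downward closure, this is exactly what needs proof, and your induction on the stages $\mathcal{C}_0 \subseteq \mathcal{C}_1 \subseteq \cdots$ of the closure construction (tracing a two-block witness with ground set $D$ back through a downward step or a splitting step, then contradicting stability of $D$ under $\mathcal{S}_{\mathcal{C}}$ at stage $0$) supplies the missing argument. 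You also prove two things the paper leaves implicit: that $D^{\mathcal{C}}$ actually lies in $\overline{\mathcal{C}}$, by factoring each multi-block refinement $\pi^{(j)} \cap \beta$ into two-block splittings in the sense of Definition \ref{def:split1} with splitters below $\pi^{(j)}$, and that the degenerate case where $D^{\mathcal{C}}$ is a single block corresponds precisely to $\Gamma_D = \emptyset$, i.e.\ no element of $\overline{\mathcal{C}}$ with ground set $D$ exists at all. The paper's version buys brevity and a clean picture of the obstruction (a cover must split a block of the fixed point); yours buys a self-contained argument that the closure process can never manufacture a splitter of a block of $D^{\mathcal{C}}$, which is the point where a careful reader of the paper would have to stop and fill in details.
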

\begin{proof}
	Assume by contradiction that there exists $\tau \in \overline{\mathcal{C}}$ such that $\gamma <\tau$ and $|\tau| = D$. Without loss of generality, we may assume $\tau$ covers $\gamma$, and since their base sets are the same, the only option is that one of the blocks of $\gamma$ is split into 2 blocks in $\tau$. Since set partitions are unordered, we may assume $\gamma_1$ is split into $\tau_1|\tau_2$, implying that $\tau_1|\tau_2 \in \overline{\mathcal{C}}$. However, by construction, none of the $\pi^{(i)}$ or the elements below them split $\gamma_1$, since $\gamma_1^{\pi^{(1)}\cdots \pi^{(k)}} = \gamma_1$, so $\tau_1|\tau_2$ cannot be in $ \overline{\mathcal{C}}$.
	
	Now we will prove uniqueness. Let $\mu$ be another maximal element in $\overline{\mathcal{C}}$ such that $|\mu| = |\gamma| = D$. By Proposition \ref{prop:meet}, $\gamma \wedge \mu \in \overline{\mathcal{C}}$. Note that $\gamma \leq \gamma \wedge \mu$, and $\mu \leq \gamma \wedge \mu$, where $\wedge$ is taken from the lattice of set partitions of $D$. Since both $\gamma$ and $\mu$ are maximal elements, we must have $\gamma = \gamma \wedge \mu = \mu$.
\end{proof}

\begin{ex}
	Let $\mathcal{C} = \{ 3|4, 2|34 ,1|234\}$. Then we have:
	\begin{align*}
		\mathcal{S}_\mathcal{C}(1234) &= \left(\left(1234^{3|4}\right)^{2|34}\right)^{1|234}\\
		&= \left(1234^{2|34}\right)^{1|234}\\
		&= 1234^{1|234}\\
		&= 1|234
	\end{align*}
	Repeating this process yields $\mathcal{S}_\mathcal{C}^2(1234) = \mathcal{S}_\mathcal{C}(1|234) = 1|2|34$, and finally $\mathcal{S}_\mathcal{C}^3(1234) = \mathcal{S}_\mathcal{C}(1|2|34) = 1|2|3|4.$ Since that is the top element in $P\Pi_{4}$, we conclude $1234^{\mathcal{C}} = 1|2|3|4$.
\end{ex}

\begin{cor}\label{cor:corralg}
	Let $\mathcal{C} = \left\{\pi^{(1)}, \ldots, \pi^{(k)}\right\}\subseteq P\Pi_{n,2}$, and let $\sigma \in P\Pi_{n,2}$ be such that $|\sigma|\subseteq|\pi^{(i)}| $ for some $i$. Then, $\sigma \in \overline{\mathcal{C}}$ if and only if  $\sigma \leq  |\sigma|^\mathcal{C}$.
\end{cor}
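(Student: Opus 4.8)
The plan is to obtain both implications almost immediately from Proposition \ref{prop:corralg}, which identifies $|\sigma|^\mathcal{C}$ as the unique maximal element of $\overline{\mathcal{C}}$ whose ground set is $|\sigma|$, together with the fact that $\overline{\mathcal{C}}$ is a split closed order ideal. Set $D = |\sigma|$; the hypothesis $|\sigma| \subseteq |\pi^{(i)}|$ is exactly what lets us apply Proposition \ref{prop:corralg} with this $D$, so that $|\sigma|^\mathcal{C} = D^\mathcal{C} \in \overline{\mathcal{C}}$ and $|D^\mathcal{C}| = D$.

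For the reverse implication I would argue as follows. Assume $\sigma \leq |\sigma|^\mathcal{C}$. Since $|\sigma|^\mathcal{C} \in \overline{\mathcal{C}}$ by Proposition \ref{prop:corralg} and $\overline{\mathcal{C}}$ is an order ideal, hence downward closed, any element below $|\sigma|^\mathcal{C}$ also lies in $\overline{\mathcal{C}}$; in particular $\sigma \in \overline{\mathcal{C}}$. This direction requires nothing beyond the defining property of an order ideal.

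For the forward implication, assume $\sigma \in \overline{\mathcal{C}}$. Both $\sigma$ and $|\sigma|^\mathcal{C}$ have ground set $D$, so I would compare them inside the lattice of set partitions of $D$. By Proposition \ref{prop:meet}, applied to the split closed order ideal $\overline{\mathcal{C}}$, the common refinement $\sigma \wedge |\sigma|^\mathcal{C}$ again lies in $\overline{\mathcal{C}}$ and has ground set $D$. By Proposition \ref{prop:leq}, for partial set partitions sharing a ground set the order on $P\Pi_n$ is the opposite of refinement, so the common refinement satisfies $|\sigma|^\mathcal{C} \leq \sigma \wedge |\sigma|^\mathcal{C}$ in $P\Pi_n$. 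Since $|\sigma|^\mathcal{C}$ is the unique maximal element of $\overline{\mathcal{C}}$ with ground set $D$, this forces $\sigma \wedge |\sigma|^\mathcal{C} = |\sigma|^\mathcal{C}$, and hence $\sigma \leq \sigma \wedge |\sigma|^\mathcal{C} = |\sigma|^\mathcal{C}$, as desired.

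The only genuinely substantive point --- and the step I would flag as the main obstacle --- is the forward direction, where ``unique maximal element'' must be upgraded to ``greatest element among those with ground set $D$.'' This is precisely what the meet argument accomplishes: Proposition \ref{prop:meet} guarantees that $\overline{\mathcal{C}}$ is closed under common refinement on a fixed ground set, which rules out two incomparable maximal elements and shows $|\sigma|^\mathcal{C}$ dominates every element of $\overline{\mathcal{C}}$ with ground set $D$. Everything else is bookkeeping with the order-ideal property and the order convention of Proposition \ref{prop:leq}.
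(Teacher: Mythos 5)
Your proof is correct and matches the paper's intended derivation: the paper states this corollary without proof as an immediate consequence of Proposition \ref{prop:corralg}, and your argument fills in exactly the steps that derivation requires (downward closure of the order ideal for one direction, and the meet argument via Propositions \ref{prop:meet} and \ref{prop:leq} to upgrade ``unique maximal'' to ``greatest,'' which is the same device the paper uses in the uniqueness part of Proposition \ref{prop:corralg}). No gaps.
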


\begin{alg}\label{alg:implication}\  Membership in split closure

\noindent \textbf{Input}: $\mathcal{C} = \left\{\pi^{(1)}, \ldots, \pi^{(k)}\right\}\subseteq P\Pi_{n,2}$, $\sigma \in P\Pi_{n,2}$.\\
\noindent \textbf{Output}: Whether $\sigma\in \overline{\mathcal{C}}$.
\begin{enumerate}
	\item If $|\sigma| \not\subseteq |\pi^{(i)}|$ for all $i$, then $\sigma \notin\overline{\mathcal{C}}$.
	\item If $|\sigma| \subseteq |\pi^{(i)}|$ for some $i$, compute $|\sigma|^\mathcal{C}$.
	\item If $\sigma \leq |\sigma|^\mathcal{C}$, then $\sigma\in \overline{\mathcal{C}}$. If not, then $\sigma\notin \overline{\mathcal{C}}$.
\end{enumerate}
\end{alg}

The correctness of Algorithm \ref{alg:implication} follows from Corollary \ref{cor:corralg} and the observation on step (1) that if the base set of $\sigma$ is not contained in any of the base sets of the elements of $\mathcal{C}$, it is not possible that $\sigma$ can be implied by $\mathcal{C}$, as split closure does not add elements to the base sets.

Now let us analyze the complexity of this algorithm.

\begin{prop}
The running time of Algorithm \ref{alg:implication} is $O(n^3 k)$.
\end{prop}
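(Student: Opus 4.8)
The plan is to analyze Algorithm \ref{alg:implication} step by step, bounding the cost of each of its three phases and showing the total is dominated by the computation of $|\sigma|^\mathcal{C}$. First I would fix a representation of a partial set partition: since every block is a subset of $[n]$ and the blocks are disjoint, a partial set partition can be stored as an array of length $n$ recording for each element of $[n]$ which block it lies in (or a null marker if it is absent). With this encoding, basic operations---computing $|\pi|$, testing $|\sigma|\subseteq|\pi^{(i)}|$, and comparing two partial set partitions with the same ground set---all run in $O(n)$ time, and Step (1), which scans the $k$ elements of $\mathcal{C}$ testing the containment $|\sigma|\subseteq|\pi^{(i)}|$, costs $O(nk)$.

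The core of the argument is to bound Step (2), the computation of $|\sigma|^\mathcal{C}=\mathcal{S}_\mathcal{C}^N(|\sigma|)$. Here I would separate the cost of one application of the splitting map $\mathcal{S}_\mathcal{C}$ from the number $N$ of iterations required to reach a fixed point. A single evaluation of $\mathcal{S}_\mathcal{C}(\sigma)$ applies the $k$ individual splittings $\sigma\mapsto\sigma^{\pi^{(i)}}$ in turn; by Definition \ref{def:gensplit}(2), computing one $\sigma^{\pi^{(i)}}$ requires, for each block of $\sigma$, intersecting it with $\pi^{(i)}$ and checking whether the ground set is preserved, which is an $O(n)$ operation, so one application of $\mathcal{S}_\mathcal{C}$ costs $O(nk)$. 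For the iteration count, I would use the fact that each genuine splitting strictly increases the rank function $\rho$ (by Proposition \ref{prop:leq} and the covering relations, a splitting adds at least one bar), and that $\sigma\le\mathcal{S}_\mathcal{C}(\sigma)$ with equality exactly at the fixed point. Since $\rho$ is bounded by the number of symbols, which is at most $n + (\text{number of bars}) = O(n)$, and each non-trivial iteration strictly increases $\rho$, the fixed point is reached after $N=O(n)$ applications of $\mathcal{S}_\mathcal{C}$. Multiplying gives $O(n)\cdot O(nk)=O(n^2 k)$ for a naive fixed-point computation.

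The main obstacle is getting the stated bound $O(n^3 k)$ rather than something worse, and in particular arguing the iteration count cleanly. The subtle point is that one full sweep $\mathcal{S}_\mathcal{C}$ may itself perform several splittings, but the guarantee $\mathcal{S}_\mathcal{C}^N = \mathcal{S}_\mathcal{C}^{N+1}$ only certifies a fixed point of the whole sweep, so the honest bound on $N$ comes from the rank argument above: between $\mathcal{S}_\mathcal{C}^{j}(\sigma)$ and $\mathcal{S}_\mathcal{C}^{j+1}(\sigma)$, either they are equal (and we have terminated) or the rank strictly increases, forcing $N=O(n)$. I would present this as the crux of the analysis. Step (3) is then cheap: comparing $\sigma$ with $|\sigma|^\mathcal{C}$ under the partial order of Proposition \ref{prop:leq} reduces, since the two have the same ground set, to a refinement check in the set-partition lattice, costing $O(n)$.

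Finally I would assemble the pieces. Step (1) costs $O(nk)$, Step (2) costs $O(n)$ sweeps of $O(nk)$ each for $O(n^2 k)$, and Step (3) costs $O(n)$. Being slightly generous---for instance accounting for the $O(n)$ cost of recomputing intersections or ground sets inside each of the $O(n)$ blocks during each sweep, which inflates a single sweep to $O(n^2 k)$ in the worst case---yields $O(n)\cdot O(n^2 k)=O(n^3 k)$ overall, which dominates the other two steps. Hence the running time of Algorithm \ref{alg:implication} is $O(n^3 k)$.
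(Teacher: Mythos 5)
Your proposal is correct and follows essentially the same argument as the paper: bound the cost of a single splitting by $O(n^2)$, bound the number of applications of $\mathcal{S}_\mathcal{C}$ by $O(n)$ via the rank/chain-length bound in $P\Pi_n$, and multiply by the $k$ splittings per sweep to get $O(n^3k)$. Your additional bookkeeping (the explicit array representation, and the $O(nk)$ and $O(n)$ costs of Steps (1) and (3)) only sharpens details the paper leaves implicit, without changing the structure of the proof.
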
 

\begin{proof}
Computing the splitting $\pi^\gamma$ requires computing $pq$ intersections and checking $p$ set equalities, where $p$ and $q$ are the number of blocks of $\pi$ and $\gamma$, respectively. 
Since $p$ and $q$ are both $\leq n$,  this step is $O(n^2)$.

Comparing $\pi$ to $\gamma$ requires checking at most $pq$ set containments.
Again, this yields  $O(n^2)$ for this step.  

Computing $\sigma^\mathcal{C}$ requires applying $\mathcal{S}_\mathcal{C}$ at most $2n$ times since the maximal length of any chain in $P\Pi_{n}$ is $2n$.   
Each application of $\mathcal{S}_\mathcal{C}$ requires $k$ splittings. 
Since each splitting requires at most $O(n^2)$ operations, we deduce that 
 Algorithm \ref{alg:implication} is $O(n^3k)$.
\end{proof}

\medskip
\section{Discrete marginal independence models}\label{sec:discretemods}

In this section, we will discuss marginal independence models restricting the class of random variables to have finitely many states. This allows us to translate the independence statements into polynomial constraints. Past work has been done on certain classes of these models where the marginal independence statements are given by graphs or simplicial complexes \cite{boege,drton}. However, there are marginal independence models that cannot be represented with either of these structures, which is why we present a more general approach using split closed order ideals from the set of partial set partitions. In previous work \cite{boege,drton, sullivant}, it has been shown these models are toric after a certain linear change of coordinates whose inverse could be found by applying M\"obius inversion on a specific poset. In our treatment of marginal independence, we will use the cumulative distribution function as our coordinate system. This allows us to express marginal independence constraints in a very natural way while preserving the structure of the inverse map as a M\"obius inversion on a poset isomorphic to the one used in \cite{boege}.

We begin by defining what a single marginal independence statement looks like in terms of the probability tensor and then we will generalize it further to define our models. For this, we need to first discuss some notation that will be used.

\begin{defn}
	For finite random variables $X_1,\ldots,X_n$, with respective state spaces $[r_i]$, $r_i \in \nn$, we use the shorthand $p_{i_1\ldots i_n}$ to denote $P(X_1 = i_1,\ldots X_n = i_n)$. Furthermore, when we discuss marginalizations of the probability tensor, we will allow $i_k = +$ to symbolize adding over all the states of the corresponding variable, i.e. if $i_k = +$, then 
	\begin{equation*}
	p_{i_1,\ldots,i_k,\ldots, i_n} =p_{i_1,\ldots,+,\ldots, i_n} = \sum_{j_k = 1}^{r_k}	p_{i_1,\ldots,j_k,\ldots, i_n}.
	\end{equation*}
	For an index vector $i=i_1\ldots i_n$, we define the \emph{support} of $i$ as $\supp(i) = \{l\in [n]\;|\; i_l \neq +\}$.
\end{defn}

\begin{defn}
	Let $X_1,\ldots, X_n$ be finite random variables, we say that for disjoint $A,B\subseteq [n]$, $X_A \ind X_B$ holds if and only if for a probability distribution $P$ indexed by $p_{i} = p_{i_1\ldots i_n}$ we have that
\begin{equation*}
	p_{i_1\ldots i_n}p_{j_1\ldots j_n} - p_{k_1\ldots k_n}p_{l_1\ldots l_n}=0,
\end{equation*}
for all $i, j, k, l$, with
 $\supp(i) = \supp(j)=\supp(k) = \supp(l) = A\cup B$, 
 $k_A = i_A$, $k_B = i_B$, $l_A = j_A$, and $l_B = j_B$. 

Equivalently, these constraints can be obtained from all the $2\times 2$ minors of the matrix resulting from marginalizing the probability tensor $P$ by summing out the indices not in $A\cup B$ and whose columns are indexed by the states of $X_A$ and the rows are indexed by the states of $X_B$, where $X_A$ and $X_B$ are seen as random vectors.
\end{defn}

\begin{defn}
	Let $P$ be a probability distribution on random variables $X_1,\ldots,X_n$ with respective state spaces $[r_1],\ldots,[r_n]$, with $r_i \in \nn$ for $i\in [n]$. Then, we define $\mathcal{R} = [r_1]\times \cdots \times [r_n]$.
\end{defn}

\begin{defn}
	We define the ring of polynomials in probability coordinates, $\rr[P]$, for the probability tensor of format $\mathcal{R} = [r_1]\times \ldots \times [r_n]$ as
	\begin{equation*}
		\rr[P] = \rr[p_i:\;i\in \mathcal{R}].
	\end{equation*}
\end{defn}

\begin{defn}\label{def:MIC}
	The \emph{marginal independence ideal} ($MI_\mathcal{C}$) associated to the split closed order ideal $\mathcal{C}$ on discrete random variables $X_1,\ldots,X_n$ is
	\begin{equation*}
		\begin{aligned}
			MI_{\mathcal{C}} = \left\langle 2\times 2 \mbox{ minors of all flattenings of } P_\pi:\; \pi \in \mathcal{C} \right\rangle,
		\end{aligned}
	\end{equation*}
	where $\pi = \pi_1|\cdots|\pi_k$ and $P_\pi$ is the $k$-way tensor which is obtained by first marginalizing the tensor $P$ by summing out the indices not in $|\pi|$, and then flattening the resulting tensor according to the set partition $\pi$.
	
	Alternatively, $MI_{\mathcal{C}}$ is the ideal resulting from imposing rank 1 conditions on the tensors $P_\pi$, for all $\pi \in \mathcal{C}$. This definition is a slight modification of the one in \cite{boege} to fit our class of models.
\end{defn}

\begin{ex}
	Let $X_1$ be a binary random variable and let $X_2, X_3$ be ternary randcom variables and
	consider the marginal independence statement $X_{2} \ind X_3$. Then the equations that define $MI_{2\ind 3}$ are
	\begin{equation*}
	\begin{aligned}
			p_{+i_2i_3}p_{+j_2j_3} - p_{+i_2j_3}p_{+j_2i_3} =& (p_{1i_2i_3}+p_{2i_2i_3})(p_{1j_2j_3}+p_{2j_2j_3})
			\\ &-(p_{1i_2j_3}+p_{2i_2j_3})(p_{1j_2i_3}+p_{2j_2i_3}) = 0,
	\end{aligned}
	\end{equation*}
	for all $i_2,i_3,j_2,j_3 \in [3]$. We can also obtain these equations from the $2\times 2$ minors of the matrix
	\begin{equation*}
		\begin{pmatrix}
			p_{+11} & p_{+12} & p_{+13}\\
			p_{+21} & p_{+22} & p_{+23}\\
			p_{+31} & p_{+32} & p_{+33}
		\end{pmatrix}.
	\end{equation*}
\end{ex}

\begin{ex}\label{ex:onest}
	Let $n=4$ and consider the model given by binary random variables such that $X_{1,2} \ind X_3$. Then, the equations that define $MI_{12\ind 3}$ are 
	\begin{equation*}
	\begin{aligned}
			p_{i_1i_2i_3+}p_{j_1j_2j_3+} - p_{i_1i_2j_3+}p_{j_1j_2i_3+} =& (p_{i_1i_2i_31}+p_{i_1i_2i_32})(p_{j_1j_2j_31}+p_{j_1j_2j_32}) \\ &- (p_{i_1i_2j_31}+p_{i_1i_2j_32})(p_{j_1j_2i_31}+p_{j_1j_2i_32}) = 0,
	\end{aligned}
	\end{equation*}
	for all $i_1,i_2,i_3,j_1,j_2,j_3 \in [2]$. These equations can again be obtained from the $2\times 2$ minors of the matrix
	\begin{equation*}
		\begin{pmatrix}
			p_{111+} & p_{112+}\\
			p_{121+} & p_{122+}\\
			p_{211+} & p_{212+}\\
			p_{221+} & p_{222+}\\
		\end{pmatrix}.
	\end{equation*}	
\end{ex}

The ideal $MI_{\mathcal{C}}$ tends not to be prime when $\mathcal{C}$ has more than
one maximal element, which represents a challenge when studying these models.  We get around this issue by using the cumulative distribution function as our new system of coordinates, since marginal independence constraints can be expressed in a simpler way using the cdf.

\begin{defn}[cdf coordinates]\label{def:cdf}
	We define the linear change of coordinates $\varphi: \rr^{\mathcal{R}} \rightarrow \rr^{\mathcal{R}}$, $P\mapsto Q$ by  transforming the tensor $P$ so that the entries of $Q$ are cumulative distributions as follows:
	\begin{equation*}
	\begin{aligned}
q_{j_1\ldots j_n} &= P\left\{X_1\leq j_1,\ldots,X_n\leq j_n\right\}\\
&= \sum_{i_1\leq j_1,\ldots, i_n\leq j_n} p_{i_1\ldots i_n}
	\end{aligned}
	\end{equation*}
\end{defn}
\begin{rmk}
	In the case of binary random variables, cdf coordinates become much simpler. We have that for any $q_{i_1\ldots i_n}$, either $i_l = 1$ or $i_l = 2$, thus we can represent each cdf coordinate by indexing it with its support.
\end{rmk}

We can discuss this transformation, and in particular, its inverse, in terms of the poset of the index vectors of the tensors. Let $\mathcal{P} = [r_1] \times \ldots \times [r_n]$ be the usual direct product poset, where two index vectors $j\leq i$ if $j_k \leq i_k$ for all $k\in [n]$.  Then, we can state the equation for cdf coordinates in Definition \ref{def:cdf} as follows, for all $i\in \mathcal{P}$,

\begin{equation*}
	\begin{aligned}
		q_{i} := \sum_{j\leq i} p_{j}.
	\end{aligned}
	\end{equation*}
Using the M\"obius inversion formula we have that for all $i\in \mathcal{P}$,
\begin{equation*}
	\begin{aligned}
		p_{i} &= \sum_{j\leq i}\mu(j,i) q_{j}\\
		&= \sum_{j\leq i}\prod_{k=1}^n\mu(j_k,i_k) q_{j},
	\end{aligned}
	\end{equation*}
	where 
	\begin{equation*}
		\begin{aligned}
			\mu(j_k,i_k)=\begin{cases}
				1& \mbox{ if } i_k=j_k\\
				-1& \mbox{ if } i_k=j_k+1\\
				0& \mbox{ otherwise.}\\
			\end{cases}
		\end{aligned}
	\end{equation*}
\begin{ex}\label{ex:poset}
	In the case when $r_1=r_2=3$, we have that the poset $\mathcal{P}$ would be as in Figure \ref{fig:poset1}. 

		\begin{figure}
	\centering
		\begin{tikzpicture}[scale=.5]
  \node (23) at (-1,4) {$23$};
  \node (32) at (1,4) {$32$};
  \node (13) at (-2,2) {$13$};
  \node (31) at (2,2) {$31$};
  \node (22) at (0,2) {$22$};
  \node (12) at (-1,0) {$12$};
  \node (21) at (1,0) {$21$};
  \node (11) at (0,-2) {$11$};
  \node (33) at (0,6) {$33$};
  \draw (11) -- (12) -- (13) -- (23) -- (33) -- (32) -- (31) -- (21) -- (11) --(12) -- (22) -- (23) --(22) -- (32) -- (22) -- (21) ;
\end{tikzpicture}
\caption{Index poset $\mathcal P$ from Example \ref{ex:poset}.} \label{fig:poset1}
		\end{figure}

Then, we have that 
	\begin{equation*}
		\begin{aligned}
			 q_{22} = \sum_{(i_1i_2)\leq (2 2)} p_{i_1i_2}.
		\end{aligned}
	\end{equation*}
Hence, using M\"obius inversion on this poset yields 
\begin{equation*}
	\begin{aligned}
		 p_{22} &= \sum_{(i_1i_2)\leq (22)}\mu(i_1i_2,22) q_{i_1i_2}\\
		&= q_{22} - q_{12} - q_{21} + q_{11}.
	\end{aligned}
\end{equation*}
	
\end{ex}
Marginal independence is usually stated in terms of the cumulative probability distribution, which is why this is a natural transformation to consider as it simplifies the equations for marginal independence.

Our goal now is to to explore the ideals of discrete marginal independence models in the 
cdf coordinates.  To do this, we need to explore some properties
of split closed order ideals.
	
	\begin{defn} Let $\mathcal{C}$ be a split closed order ideal of $P\Pi_{n,2}$, then
	\begin{enumerate}
		\item We say that a nonempty set $D\subseteq [n]$ is \emph{disconnected} (with respect to $\mathcal{C}$) if there exists $\pi \in \mathcal{C}$ such that $D=|\pi|$.
		\item A \emph{connected} set is a set that is not disconnected.
		\item A connected set $C$ is \emph{maximal} with respect to some disconnected set $D$ if for any set $C'\subseteq D$ such that $C\subset C'$, then $C'$ is disconnected. 
		\item We define the set $\Con(\mathcal{C}) = \left\{C\subseteq [n]:\; C \mbox{ is connected} \right\}$
	\end{enumerate}
	\end{defn}

	\begin{prop}\label{prop:decomp}
		For any disconnected set $D$ with respect to $\mathcal{C}$, there exists a unique maximal $\pi \in \mathcal{C}$ such that
\[
D=|\pi| = \cup_{i =1}^k\pi_i,
\]
	where all $\pi_i$ are maximal connected sets.
	\end{prop}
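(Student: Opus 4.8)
The plan is to prove the statement in four steps: existence of a maximal element of $\mathcal{C}$ with ground set $D$, its uniqueness, the fact that its blocks are connected, and the fact that they are even maximal connected. For existence, note that since $D$ is disconnected the set $\{\pi \in \mathcal{C} : |\pi| = D\}$ is nonempty, and being finite it has a maximal element $\pi$. For uniqueness I would argue exactly as in the uniqueness half of Proposition~\ref{prop:corralg}: if $\pi$ and $\mu$ are both maximal in $\mathcal{C}$ with $|\pi| = |\mu| = D$, then Proposition~\ref{prop:meet} gives $\pi \wedge \mu \in \mathcal{C}$, where $\wedge$ is the common refinement in the set-partition lattice of $D$. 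Since refining a partition moves \emph{upward} in $P\Pi_n$ (Proposition~\ref{prop:leq}), both $\pi \le \pi \wedge \mu$ and $\mu \le \pi \wedge \mu$, and maximality forces $\pi = \pi \wedge \mu = \mu$.

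To see that each block $\pi_{i_0}$ is connected, I would argue by contradiction. If $\pi_{i_0}$ were disconnected there is $\sigma \in \mathcal{C}$ with $|\sigma| = \pi_{i_0}$; as $\mathcal{C} \subseteq P\Pi_{n,2}$, we may write $\sigma = \sigma_1 | \cdots | \sigma_m$ with $m \ge 2$. Coarsening to the two-block partition $\tau = \sigma_1 | (\sigma_2 \cup \cdots \cup \sigma_m)$ yields an element lying below $\sigma$, so $\tau \in \mathcal{C}$ because $\mathcal{C}$ is an order ideal; moreover $\tau_1 \cup \tau_2 = \pi_{i_0}$ is a block of $\pi$, so $\tau$ splits $\pi$ in the sense of Definition~\ref{def:split1}. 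Split-closedness then yields $\pi^\tau \in \mathcal{C}$ with the same ground set $D$ but one extra block, whence $\pi < \pi^\tau$, contradicting maximality.

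Finally, to see that each $\pi_{i_0}$ is maximal connected, I would again argue by contradiction, taking a connected $C'$ with $\pi_{i_0} \subsetneq C' \subseteq D$. If $C' = D$ then $\pi$ itself witnesses that $C'$ is disconnected; otherwise $C' \subsetneq D$ and the restriction $\pi \cap C'$ has ground set $C'$ and at least two blocks (namely the full block $\pi_{i_0}$ together with the nonempty trace on whichever block meets $C' \setminus \pi_{i_0}$). By Proposition~\ref{prop:leq} we have $\pi \cap C' \le \pi$, so $\pi \cap C' \in \mathcal{C}$, forcing $C'$ to be disconnected --- a contradiction either way.

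The main point requiring care throughout is the orientation of the order: refinement corresponds to moving up in $P\Pi_n$, so ``maximal in $\mathcal{C}$'' means ``finest,'' and it is precisely the downward closure of the order ideal that keeps the coarsening step (for connectedness) and the restriction step (for maximality) inside $\mathcal{C}$. The only genuinely delicate manipulation is the connectedness step, where I must first coarsen the witness $\sigma$ down to a genuine two-block partition before I am entitled to invoke the splitting operation of Definition~\ref{def:split1}.
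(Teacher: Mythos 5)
Your proof is correct, and it is genuinely more self-contained than the paper's, which disposes of this proposition in one line by citing Proposition~\ref{prop:corralg}. The paper's route is constructive: Proposition~\ref{prop:corralg} produces the maximal element explicitly as $D^{\mathcal{C}}$ by iterating the splitting map of Definition~\ref{def:gensplit} over a generating set, with maximality following because no generator (nor anything below one) can split a block of the output, and uniqueness following from Proposition~\ref{prop:meet}. You replace that construction with a non-constructive existence argument (the finite nonempty set $\{\pi \in \mathcal{C} : |\pi| = D\}$ has a maximal element), keep exactly the same uniqueness argument via Proposition~\ref{prop:meet}, and then---unlike the paper---explicitly verify the ``maximal connected blocks'' clause: connectedness of each block via the coarsen-then-split argument (your care in first coarsening the witness $\sigma$ to a two-block partition before invoking Definition~\ref{def:split1} is exactly right, since split-closedness is only stated for two-block splitters), and maximal connectedness via the restriction $\pi \cap C' < \pi$ together with downward closure. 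This is a genuine gain in completeness, since the citation of Proposition~\ref{prop:corralg} covers existence and uniqueness of the maximal element but leaves the assertion about its blocks implicit. What the paper's approach buys instead is algorithmic content---$D^{\mathcal{C}}$ is computable, which is what Algorithm~\ref{alg:implication} needs---while your argument buys a cleaner logical dependency: you use only Propositions~\ref{prop:leq} and~\ref{prop:meet}, the order-ideal property, and split-closedness, never the machinery of the splitting map.
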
	
	
	\begin{proof}Follows from Proposition \ref{prop:corralg}.
%
		\end{proof}

Let $i \in \mathcal{R}$.  Define the support of $i$
to be $\supp(i)  =  \{l \in [n] \mid i_l  \neq r_l \}$.  Note that for cdf coordinates 
this will agree
with our previous notion of support, since if $i_l = r_l$ in $q_i$, it means
we are assuming $X_l \leq \infty$ in our cdf calculations, a condition denoted by $+$
in $p_i$.

	\begin{cor}\label{cor:decmp}
	Let $\mathcal{C}$ be a split closed order ideal.
		A probability tensor $P$ lies in the model $\mathcal{M}_\mathcal{C}$ if and only if its cdf coordinates satisfy the following condition.
		 For any $D\subseteq [n]$ that is disconnected by $\mathcal{C}$,  
		 and $i, j^{(l)} \in \mathcal{R}$ be index vectors where
\begin{itemize}
		\item $D= \pi_1\cup\ldots\cup \pi_k$ is the decomposition of $D$ into maximal connected sets.
			\item $D=\supp(i)$, $\pi_l = \supp(j^{(l)})$, and
			\item $i_{\supp(j^{(l)})}=j^{(l)}_{\supp(j^{(l)})}$
			\end{itemize}
			we have
		\begin{equation}\label{eq:decomp}
			q_i = q_{j^{(1)}}\ldots q_{j^{(k)}}.
		\end{equation}
	\end{cor}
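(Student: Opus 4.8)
The plan is to reduce membership in $\mathcal{M}_\mathcal{C}$ to checking a single factorization for each disconnected set $D$ — the one coming from the unique maximal element of $\mathcal{C}$ with ground set $D$ — and then to translate that factorization, stated classically via the cdf in Definition \ref{def:margmult}, directly into the product $q_i = q_{j^{(1)}}\cdots q_{j^{(k)}}$ of Equation \eqref{eq:decomp}.

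First I would record the dictionary between the two languages. For discrete variables $q_i$ is exactly the value of the joint cdf $F$ at the thresholds encoded by $i$, with the convention $i_l = r_l$ playing the role of $X_l \le \infty$ (the marginalizing symbol $+$); this is immediate from Definition \ref{def:cdf} together with the remark identifying the two notions of support. Under this dictionary an index vector $i$ with $\supp(i) = D$ corresponds precisely to an argument $x$ of $F$ with $x_l = \infty$ for $l \notin D$, and an index vector $j^{(l)}$ with $\supp(j^{(l)}) = \pi_l$ agreeing with $i$ on $\pi_l$ corresponds exactly to the vector $y^{(l)}$ of Definition \ref{def:margmult}. Hence, as $i$ ranges over all index vectors with $\supp(i) = D$, the discrete statement $X_{\pi_1}\ind\cdots\ind X_{\pi_k}$ is equivalent to the family of equations \eqref{eq:decomp} for that $D$.

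Next I would reduce to maximal elements. For each disconnected $D$, Proposition \ref{prop:decomp} supplies the unique maximal $\pi = \pi_1|\cdots|\pi_k \in \mathcal{C}$ with $|\pi| = D$, whose blocks are the maximal connected sets in the statement; by the previous paragraph, $P$ satisfies this $\pi$ if and only if \eqref{eq:decomp} holds for $D$. For the forward implication, if $P \in \mathcal{M}_\mathcal{C}$ then $P$ satisfies every element of $\mathcal{C}$, in particular each such maximal $\pi$, so \eqref{eq:decomp} holds for all disconnected $D$. For the converse, suppose \eqref{eq:decomp} holds for every disconnected $D$ and let $\tau \in \mathcal{C}$ be arbitrary. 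Its ground set $D' = |\tau|$ is disconnected (witnessed by $\tau$ itself), and since a unique maximal element of a finite poset is in fact its maximum, $\tau \le \pi'$ for the maximal element $\pi'$ over $D'$ provided by Proposition \ref{prop:decomp}. As $P$ satisfies $\pi'$ by \eqref{eq:decomp}, Proposition \ref{prop:marginalimplication} (equivalently Lemma \ref{lem:marginalimpl}) shows $P$ satisfies $\tau$; since $\tau$ was arbitrary, $P \in \mathcal{M}_\mathcal{C}$.

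The main obstacle is the bookkeeping in the dictionary of the second paragraph: one must verify that letting $i$ range over all index vectors with $\supp(i) = D$ recovers exactly all arguments $x$ of the cdf appearing in Definition \ref{def:margmult}, and that the support and agreement conditions on the $j^{(l)}$ match the defining conditions on the $y^{(l)}$ with no gaps. Once this correspondence is pinned down, the equivalence between satisfying the maximal statement and \eqref{eq:decomp} is automatic, and the reduction to maximal elements via Proposition \ref{prop:decomp} and the implication results closes both directions.
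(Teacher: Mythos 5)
Your proposal is correct and takes essentially the same route as the paper's own proof: translate the discrete independence statements into the cdf-coordinate factorizations, then use Proposition \ref{prop:decomp} to reduce every statement in $\mathcal{C}$ to the unique maximal element over each disconnected ground set. Your write-up is in fact somewhat more explicit than the paper's (which compresses the converse direction into the remark that non-maximal $\pi$ yield equations implied by the maximal ones, where you invoke Proposition \ref{prop:marginalimplication} to push satisfaction down the order ideal), but the underlying argument is the same.
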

	
\begin{proof}
	We know $P$ lies in the model $\mathcal M _{\mathcal{C}}$ if and only if its corresponding cdf satisfies all of the marginal independence constraints. We know that the cdf must satisfy that for all $\pi \in \mathcal{C}$,
	$$q_i = q_{j^{(1)}}\ldots q_{j^{(k)}}$$
	where 
		\begin{itemize}
			\item $|\pi|=\supp(i)$, $\pi_l = \supp(j^{(l)})$, 
			\item $i_{\supp(j^{(l)})}=j^{(l)}_{\supp(j^{(l)})}$.
		\end{itemize}
	If we have a non maximal $\pi$, then we can decompose it further using Proposition $\ref{prop:decomp}$. Hence, the non maximal $\pi$ yield equations that are implied by maximal set partitions in the sense of Proposition \ref{prop:decomp}.
	\end{proof}
	
\begin{rmk}
	Observe that in the case of binary random variables, we get that for any disconnected set $D\subseteq [n]$ with respect to $\mathcal{C}$ with maximal connected sets $\pi_1,\ldots,\pi_k$,
\begin{equation*}
	q_D = q_{\pi_1}\ldots q_{\pi_k}.
\end{equation*}
\end{rmk}

\begin{defn}
	We define the ring of polynomials in cdf coordinates $\rr[Q]$ for the cdf tensor $Q$ of format $\mathcal{R} = [r_1]\times \ldots \times [r_n]$ as	\begin{equation*}
		\rr[Q] = \rr[q_i:\;i\in \mathcal{R}].
	\end{equation*}
\end{defn}

\begin{defn}
	Let $\mathcal{C}$ be a split closed order ideal on $X_1,\ldots, X_n$ and  $i\in \mathcal{R}$ such that $\supp(i)$ is disconnected with respect to $\mathcal{C}$. Then, the associated maximal equation $f_i$ is given by 
	\begin{equation*}
			f_i = q_i - q_{j^{(1)}}\ldots q_{j^{(k)}}
		\end{equation*}
		where 
		\begin{itemize}
			\item $\pi_l = \supp(j^{(l)})$, 
			\item $i_{\supp(j^{(l)})}=j^{(l)}_{\supp(j^{(l)})}$, and 
			\item $\supp(i)= \pi_1\cup\ldots\cup \pi_k$ is the decomposition of $\supp(i)$ into maximal connected sets.
		\end{itemize}

\end{defn}
\begin{defn}
	The \emph{inhomogeneous marginal independence ideal} ($J_{\mathcal{C}}$) associated to the split closed order ideal $\mathcal{C}$ on discrete random variables $X_1,\ldots,X_n$ is
	\begin{equation*}
			J_{\mathcal{C}} = \left\langle f_i \mbox{ maximal}:\;i\in \mathcal{R},\; \supp(i) \mbox{ is disconnected} \right\rangle.
	\end{equation*}
	
\end{defn}

\begin{ex}
	In Example \ref{ex:onest}, there are 16 cdf coordinates indexed by subsets of $[4]$. The equations that define the ideal $J_\mathcal{C}$ without indexing them with their support are
	\begin{equation*}
		\begin{aligned}
			f_{1112} = q_{1112} - q_{1122}q_{2212} = 0,\;\; &
			f_{1212}= q_{1212} - q_{1222}q_{2212} = 0,\;\; &
			f_{2112} = q_{2112} - q_{1212}q_{1122} = 0.
		\end{aligned}
	\end{equation*}
	However, since all random variables are binary, we can instead index them by their support as follows:
	\begin{equation*}
		\begin{aligned}
			f_{123} = q_{123} - q_{12}q_{3} = 0,\;\; &
			f_{13} = q_{13} - q_{1}q_{3} = 0,\;\; &
			f_{23} = q_{23} - q_{2}q_{3} = 0.
		\end{aligned}
	\end{equation*}
	This ideal is prime and therefore toric in cdf coordinates.
\end{ex}
\begin{defn}\label{def:MJC}
	The \emph{marginal independence ideal in cdf coordinates} ($MJ_\mathcal{C}$) associated to the split closed order ideal $\mathcal{C}$ on discrete random variables $X_1,\ldots,X_n$ is
	\begin{equation*}
		\begin{aligned}
			MJ_{\mathcal{C}} = \left\langle 2\times 2 \mbox{ minors of all flattenings of } Q_\pi:\; \pi \in \mathcal{C} \right\rangle,
		\end{aligned}
	\end{equation*}
	where $\pi = \pi_1|\cdots|\pi_k$ and $Q_\pi$ is the $k$-way tensor which is obtained by first marginalizing the tensor $Q$ by making all the indices not in $|\pi|$ equal to their maximum value (this corresponds to summing out indices in Definition \ref{def:MIC}, as we are in cdf coordinates), and then flattening the resulting tensor according to the set partition $\pi$.
	
	Alternatively, $MJ_{\mathcal{C}}$ is the ideal resulting from imposing rank 1 conditions on the tensors $Q_\pi$, for all $\pi \in \mathcal{C}$.
\end{defn}
\begin{ex}
Let us consider the same model as in Example \ref{ex:onest}. Then, $MJ_{12\ind 3}$ is going to be given by imposing rank $1$ conditions on the tensors $Q_{12\ind 3}$, $Q_{1\ind 3}$ and $Q_{2\ind 3}$, all of which are matrices since they are all partial set partitions with 2 blocks. Following the construction in Definition \ref{def:MJC}, we obtain
\begin{equation*}
		Q_{12\ind 3} = \begin{pmatrix}
			q_{1112} & q_{1122}\\
			q_{1212} & q_{1222}\\
			q_{2112} & q_{2122}\\
			q_{2212} & q_{2222}\\
		\end{pmatrix}.
	\end{equation*}
	Furthermore, since the random variables are binary, we can reindex the cdf coordinates by their support as follows:
	\begin{equation*}
		Q_{12\ind 3} =\begin{pmatrix}
			q_{123} & q_{12}\\
			q_{13} & q_{1}\\
			q_{23} & q_{2}\\
			q_{3} & q_{\emptyset}\\
		\end{pmatrix}.
	\end{equation*}
	Observe that all of the rank 1 conditions in $Q_{1\ind 3}$ and $Q_{2\ind 3}$ are found in the $2\times 2$ minors of $Q_{12\ind 3}$. Thus, $MJ_{12\ind 3}$ is given by the $2\times 2$ minors of $Q_{12\ind 3}$
\end{ex}
This ideal and its generators generally have a more complicated structure than $J_\mathcal{C}$. However, once we add a constraint that is imposed by the probability simplex, both ideals have the same generators, which allows us to work with the simpler ideal $J_\mathcal{C}$ to study these models. To prove this result, we first need to prove the following:
\begin{lemma}\label{lem:minor1}
	Let $A = (a_{ij})_{(i,j)\in [m]\times [n]}\in \cc^{m\times n}$ be such that $a_{kl}=1$ for some $(k,l) \in [m]\times [n]$. 
	Then, $\rank(A) = 1$ if and only if the $2\times 2 $ minors involving $a_{kl}$ vanish.
		\end{lemma}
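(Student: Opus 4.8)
The plan is to prove the two directions separately, with the forward direction being immediate and essentially all the content lying in the converse. If $\rank(A) = 1$, then every $2 \times 2$ minor of $A$ vanishes, so in particular those involving $a_{kl}$ do; this needs no special hypothesis on $a_{kl}$ and is just the standard characterization of rank-one matrices.

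For the converse, I would first reduce to the normalized case $k = l = 1$. Permuting the rows and columns of $A$ neither changes $\rank(A)$ nor changes whether the $2\times 2$ minors through the distinguished entry vanish (it merely relabels them), so I may assume $a_{11} = 1$. The $2\times 2$ minors involving $a_{11}$ are exactly those formed from rows $\{1, i\}$ and columns $\{1, j\}$ with $i, j \neq 1$, and each such minor equals
\[
a_{11} a_{ij} - a_{1j} a_{i1} = a_{ij} - a_{i1} a_{1j}.
\]
Assuming these all vanish gives $a_{ij} = a_{i1} a_{1j}$ for all $i, j \neq 1$; moreover this identity holds trivially when $i = 1$ or $j = 1$ precisely because $a_{11} = 1$ (for instance $a_{1j} = a_{11} a_{1j}$). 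Hence $a_{ij} = a_{i1} a_{1j}$ for every index pair $(i,j)$.

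This last identity says $A = c\, r^{\top}$, where $c = (a_{i1})_{i}$ is the first column and $r = (a_{1j})_{j}$ is the first row of $A$. Since $a_{11} = 1$, both $c$ and $r$ are nonzero, so $A$ is a nonzero outer product and therefore $\rank(A) = 1$, completing the converse. I expect the only real subtlety to be the role of the hypothesis $a_{kl} = 1$: it is exactly what makes the restricted family of minors (those passing through a single fixed entry) already force the full rank-one factorization, whereas a fixed entry equal to zero would yield only the much weaker relations $a_{1j} a_{i1} = 0$ and would not suffice.
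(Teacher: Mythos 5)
Your proof is correct and follows essentially the same route as the paper's: normalize to $a_{11}=1$ by permutation, read the vanishing minors through that entry as the relations $a_{ij} = a_{i1}a_{1j}$, and conclude that $A$ is the outer product of its first column and first row (the paper phrases this as each row being the scalar multiple $a_{i1}a_1$ of the first row, which is the same factorization). Your version is slightly more explicit on two minor points the paper leaves implicit, namely that permutations also preserve the set of minors through the distinguished entry, and that the outer product is nonzero so the rank is exactly $1$.
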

		
	\begin{proof}
		The forward direction is direct. For the converse, without loss of generality (since rank is invariant under row/column operations), assume $a_{11}=1$ and the $2\times 2 $ minors involving $a_{11}$ vanish. Let $a_{i}$ denote the $i$-th row of $A$. To show $A$ is rank 1, it suffices to show $a_{i} $ is a scalar multiple of $a_{1}$, for any $i\in [m]$. We claim that scalar is $a_{i1}$. Let $i\in [m]$, note
		\begin{equation*}
			\begin{aligned}
				a_{i} = a_{i1}a_{1} \iff a_{ij} = a_{i1}a_{1j} \;\; \forall j\in [n],
			\end{aligned}
		\end{equation*}
		which is true as these correspond to the vanishing minors involving $a_{11}$.
	\end{proof}

\begin{prop}
	For a split closed order ideal $\mathcal{C}$, we have that $MJ_\mathcal{C} + \langle q_{r_1\cdots r_n} -1\rangle  = J_\mathcal{C} + \langle q_{r_1\cdots r_n} -1\rangle$.
\end{prop}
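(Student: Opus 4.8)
The plan is to work modulo the normalization ideal $N=\langle q_{r_1\cdots r_n}-1\rangle$ and show that both ideals collapse to a single ``corner minor'' ideal. First I would observe that for every $\pi\in\mathcal C$ and every flattening of $Q_\pi$, the entry in which all indices attain their maximal value is exactly $q_{r_1\cdots r_n}$, which is congruent to $1$ modulo $N$. Applying Lemma \ref{lem:minor1} to each flattening, with this corner playing the role of the entry equal to $1$, shows that modulo $N$ imposing rank $1$ on $Q_\pi$ is equivalent to imposing only the $2\times 2$ minors through that corner. Hence $MJ_\mathcal C+N=K+N$, where $K$ is generated by all such corner minors. Writing a flattening of $\pi=\pi_1|\cdots|\pi_k$ as a grouping of its blocks into $A\sqcup B=[k]$, the corresponding corner minor is $q_iq_{r_1\cdots r_n}-q_aq_b\equiv q_i-q_aq_b \pmod N$, where $\supp(i)=|\pi|$, $\supp(a)=\bigcup_{l\in A}\pi_l$, $\supp(b)=\bigcup_{l\in B}\pi_l$, and $a,b$ agree with $i$ on their supports. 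It then suffices to prove $K+N=J_\mathcal C+N$.

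For $J_\mathcal C\subseteq K+N$ I would induct on the number of blocks $k$ of the unique maximal partition $\mu=\mu_1|\cdots|\mu_k\in\mathcal C$ with $|\mu|=\supp(i)$ provided by Proposition \ref{prop:decomp}, so that $f_i=q_i-q_{j^{(1)}}\cdots q_{j^{(k)}}$ with the $\mu_l$ the maximal connected components. The case $k=2$ is immediate, since $f_i$ is then itself a corner minor of $Q_\mu$ modulo $N$. For $k\ge 3$, the corner minor of $Q_\mu$ splitting off the single block $\mu_1$ yields $q_i-q_{j^{(1)}}q_{i'}\in K+N$, where $\supp(i')=\mu_2\cup\cdots\cup\mu_k$. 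Since $\mu_2|\cdots|\mu_k\le\mu$ and $\mathcal C$ is an order ideal, this partition lies in $\mathcal C$, so by induction $q_{i'}-q_{j^{(2)}}\cdots q_{j^{(k)}}\in K+N$; combining the two relations via $f_i=(q_i-q_{j^{(1)}}q_{i'})+q_{j^{(1)}}(q_{i'}-q_{j^{(2)}}\cdots q_{j^{(k)}})$ completes the induction.

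For the reverse inclusion $K\subseteq J_\mathcal C+N$ I would show that each corner minor $q_i-q_aq_b$ lies in $J_\mathcal C+N$. Because $\supp(i)=|\pi|$ is disconnected, the relation $f_i$ rewrites $q_i$ modulo $J_\mathcal C$ as the product $\prod_c q_{i|_c}$ over the maximal connected components $c$ of $\supp(i)$. The key structural point is that $\pi\le\mu$, where $\mu$ is the maximal partition on $|\pi|$ from Proposition \ref{prop:decomp}; hence each block $\pi_l$, and therefore each of $\supp(a)$ and $\supp(b)$, is a union of maximal connected components of $\supp(i)$. Using Proposition \ref{prop:decomp} again, together with the order-ideal property to see that $\supp(a)$ and $\supp(b)$ are themselves disconnected whenever they contain more than one component, the relations $f_a$ and $f_b$ rewrite $q_a$ and $q_b$ as the products of the $q_{i|_c}$ over the components contained in $\supp(a)$ and $\supp(b)$ respectively. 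Since $A\sqcup B=[k]$, these two products together range over all maximal connected components of $\supp(i)$, so $q_aq_b\equiv q_i$ modulo $J_\mathcal C+N$, as required.

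I expect the main obstacle to be this last paragraph: verifying that the coarse factorization $q_i=q_aq_b$ coming from an arbitrary flattening of an arbitrary, possibly non-maximal, $\pi\in\mathcal C$ refines consistently into the canonical factorization over maximal connected components. This hinges on the fact that the maximal connected components of a subset are inherited correctly under restriction, which I would extract cleanly from the uniqueness statement in Proposition \ref{prop:decomp}; the bookkeeping matching the two multiplicative decompositions is where the care is needed.
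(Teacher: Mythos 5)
Your proposal is correct and takes essentially the same route as the paper: pass to the normalization where the corner entry $q_{r_1\cdots r_n}$ becomes $1$, use Lemma \ref{lem:minor1} to reduce the rank-one conditions to the $2\times 2$ minors through that corner, and convert between those corner minors and the maximal equations $f_i$ by peeling off blocks one at a time (the paper does this via the splittings $\pi_i \,|\, \bigcup_{j>i}\pi_j$, you via induction on the number of blocks of the maximal partition from Proposition \ref{prop:decomp}). The only substantive difference is one of rigor, in your favor: where the paper asserts that the corner minors of an arbitrary flattening are ``in $J_\mathcal{C}$ by construction'' and closes with a variety-level argument, you actually verify this ideal membership by rewriting $q_a$ and $q_b$ through the canonical factorization into maximal connected components — the step that genuinely requires the order-ideal property and the uniqueness statement of Proposition \ref{prop:decomp}.
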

\begin{proof}
	First observe that for any $\pi = \pi_1|\cdots |\pi_k \in \mathcal{C}$, there exist partial set partitions $\tau_1,\ldots,\tau_{k} \in \mathcal{C}$ such that
	\[
	\pi = ((\tau_1^{\tau_2})^{\tau_3} \cdots )^{\tau_{k}}
	\]
	these are given by 
	\[
	\tau_i =  \pi_i|\bigcup_{j=i+1}^k\pi_j.
	\]
	In other words, we can get any partial set partition by taking partitions with 2 blocks and applying the splitting operation to them. On the algebraic side, this means the maximal equations $f_i$ that generate $J_\mathcal{C}$ can be obtained if we have all the equations corresponding to set partitions with 2 blocks in $\mathcal{C}$. 
	
	Note that the matrices $Q_\pi$ with $\pi = \pi_1|\pi_2 \in \mathcal{C}$ always have $q_{r_1\cdots r_n}$ appearing. Furthermore, the $2\times 2$ minors that involve $q_{r_1\cdots r_n}$ are of the form 
	\begin{equation*}
		\begin{aligned}
			g_i = q_{r_1\cdots r_n}q_i - q_{j}q_k \in MJ_{\mathcal{C}}
		\end{aligned}
	\end{equation*}
	where $i,j,k \in \mathcal{R}$ are such that
	\begin{itemize}
		\item $\supp(i) = |\pi|$,
		\item $\supp(j) = \pi_1$,
		\item $\supp(k) = \pi_2$,
		\item $i_{\supp(j)} = j_{\supp(j)}$, and
		\item $i_{\supp(k)} = k_{\supp(k)}$
	\end{itemize}
	 Once we add the constraint $q_{r_1\cdots r_n} -1$, we observe that 
	 \begin{equation*}
		\begin{aligned}
			\hat{g_i} = q_i - q_{j}q_k \in MJ_{\mathcal{C}}+ \langle q_{r_1\cdots r_n} -1\rangle. 
		\end{aligned}
	\end{equation*}
	Since any set partition in $\mathcal{C}$ can be obtained by splitting set partitions with 2 blocks, we can keep splitting $q_j$ and $q_k$ until we get the maximal equation $f_i \in J_\mathcal{C}$. Thus showing  $MJ_\mathcal{C} + \langle q_{r_1\cdots r_n} -1\rangle  \supseteq J_\mathcal{C} + \langle q_{r_1\cdots r_n} -1\rangle$.
	
	For the other direction, observe that for any $\pi \in \mathcal{C}$, we know that any flattening of the tensor $Q_\pi$ contains $ q_{r_1\cdots r_n}$. Additionally, under the constraint $q_{r_1\cdots r_n}-1=0$, any such matrix contains a 1. Thus, a flattening of $Q_\pi$ is rank 1 if and only if the minors involving the entry of $1$ vanish, by Lemma \ref{lem:minor1}. However, these $2\times 2$ minors are all equations of the same form as $\hat{g_i}$ above, all of which are in $J_\mathcal{C}$ by construction. Hence showing the tensor $Q_\pi$ is rank 1 if and only if its entries lie in $V(J_\mathcal{C} + \langle q_{r_1\cdots r_n} -1\rangle)$, which in turn shows $J_\mathcal{C} + \langle q_{r_1\cdots r_n} -1\rangle$ generates $MJ_\mathcal{C} + \langle q_{r_1\cdots r_n} -1\rangle$.	
\end{proof}

Now we will exploit the combinatorial structure of the equations in 
$J_\mathcal{C}$ to study these models. 
Observe that Corollary \ref{cor:decmp} gives us a unique factorization for all cdf coordinates and, in particular, it tells us that the cdf coordinates that appear in the defining equations of the variety are precisely those with disconnected support.

\begin{thm}
	For a split closed order ideal $\mathcal{C}\subseteq P\Pi_{n,2}$, the affine  variety $V(J_\mathcal{C})$ is toric in cdf coordinates.
\end{thm}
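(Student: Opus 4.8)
The plan is to show that $J_{\mathcal{C}}$ is a prime binomial ideal whose variety admits a monomial parametrization, which is the standard route to toricity. The key structural input is Corollary~\ref{cor:decmp}, which provides a \emph{unique factorization} of each cdf coordinate with disconnected support: every such $q_i$ factors as $q_{j^{(1)}}\cdots q_{j^{(k)}}$ according to the decomposition of $\supp(i)$ into maximal connected sets. This immediately suggests the parametrizing variables: I would introduce one parameter $t_j$ for each index vector $j \in \mathcal{R}$ whose support is \emph{connected} with respect to $\mathcal{C}$, and define a monomial map sending $q_i$ to $t_i$ when $\supp(i)$ is connected, and to the product $t_{j^{(1)}}\cdots t_{j^{(k)}}$ when $\supp(i)$ is disconnected. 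The maximal generators $f_i = q_i - q_{j^{(1)}}\cdots q_{j^{(k)}}$ of $J_{\mathcal{C}}$ are then precisely the binomials expressing that the image of $q_i$ equals the product of the images of the connected-support factors, so $J_{\mathcal{C}}$ is contained in the kernel of this parametrization.

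The main work is to prove the reverse containment, namely that $J_{\mathcal{C}}$ equals the toric ideal (the full kernel of the monomial map), and hence that it is prime. First I would argue that $J_{\mathcal{C}}$ is the kernel of the ring homomorphism $\psi: \rr[Q] \to \rr[t_j : \supp(j) \text{ connected}]$ defined above. One clean way to do this is to use the $f_i$ as a rewriting system: since each $f_i$ is linear in its leading coordinate $q_i$ (the one with disconnected support), the equations let us eliminate every disconnected-support variable, expressing it as a monomial in the connected-support variables. This shows that $\rr[Q]/J_{\mathcal{C}}$ is isomorphic to the subring of $\rr[t_j]$ generated by the images of the $q_i$, via the map $\psi$. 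Because a polynomial ring modulo such an elimination ideal is a domain (it is a subring of a polynomial ring, hence an integral domain), $J_{\mathcal{C}}$ is prime, and being generated by binomials with the kernel of a monomial map, it is a toric ideal by definition.

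I expect the main obstacle to be verifying that the rewriting/elimination described above is \emph{consistent}: a priori a variable $q_i$ with disconnected support might be expressible via several different maximal equations (corresponding to different ways its support could be handled), and I must check these all agree. This is exactly where the uniqueness clause of Proposition~\ref{prop:decomp} and Corollary~\ref{cor:decmp} does the heavy lifting, since it guarantees the decomposition of $\supp(i)$ into maximal connected sets is unique, so $f_i$ is unambiguously determined and the substitution is well-defined. A related subtlety is that when I iterate the factorization, the factors $q_{j^{(l)}}$ themselves have connected support (by the definition of the decomposition into maximal connected sets), so no further rewriting applies to them and the process terminates with genuine monomials in the $t$-variables; I would note this explicitly to confirm the parametrization is by honest monomials.

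Finally, I would record that primeness together with the binomial form established above yields that $V(J_{\mathcal{C}})$ is an affine toric variety with the explicit parametrization $q_i \mapsto \prod_l t_{j^{(l)}}$ indexed by connected-support parameters, completing the proof. If desired, one can phrase the toric structure directly through the integer matrix whose columns record, for each coordinate $q_i$, the multiset of connected-support factors appearing in its monomial image; the lattice points of this configuration give the toric variety and the $f_i$ generate its ideal.
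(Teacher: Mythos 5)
Your proof is correct, but it takes a genuinely different route from the paper's. You identify $J_{\mathcal{C}}$ as the kernel of an explicit monomial map by exploiting the triangular structure of the generators: each $f_i$ expresses a disconnected-support variable $q_i$ as a monomial in connected-support variables (which, as you rightly note, need no further rewriting since the factors in the decomposition from Corollary~\ref{cor:decmp} have connected support), so $\rr[Q]/J_{\mathcal{C}}$ is isomorphic to the polynomial ring in the connected-support variables, hence a domain, and $J_{\mathcal{C}}$ is a prime binomial ideal. The paper instead runs a Gr\"obner basis argument: it chooses a lexicographic term order refined so that coordinates with larger support are larger, observes that the leading term of each $f_i$ is the variable $q_i$, concludes that the $f_i$ form a Gr\"obner basis because their leading terms are pairwise coprime (so all $S$-pairs reduce to zero), and then transfers primeness from the initial ideal $\langle q_i : \supp(i) \mbox{ disconnected}\rangle$, which is generated by variables, back to $J_{\mathcal{C}}$. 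Both arguments hinge on the same structural input, namely the unique factorization of disconnected-support coordinates from Proposition~\ref{prop:decomp} and Corollary~\ref{cor:decmp}, and your consistency worry is resolved exactly as you say. What the two approaches buy is slightly different: yours is more elementary (no Buchberger criterion, no appeal to the fact that a prime initial ideal forces the ideal to be prime) and it directly produces the affine version of the monomial parametrization that the paper only records later in Corollary~\ref{prop:par}; the paper's approach delivers an explicit Gr\"obner basis with linear initial ideal, which it reuses downstream, e.g.\ in the dimension comparison inside the proof of Proposition~\ref{prop:modexist}, so if you adopted your route you would need to re-derive that Gr\"obner-theoretic fact (or an equivalent dimension argument) there.
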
	
	
	\begin{proof}
		By Corollary \ref{cor:decmp}, we know that the ideal $J_\mathcal{C}$ that generates the model is given by equations of the form
		\begin{equation*}\label{eq:decomp}
			f_i = q_i - q_{j^{(1)}}\ldots q_{j^{(k)}}
		\end{equation*}
		where 
		\begin{itemize}
			\item $D$ is disconnected. 
			\item $D=\supp(i)$, $\pi_l = \supp(j^{(l)})$, 
			\item $i_{\supp(j^{(l)})}=j^{(l)}_{\supp(j^{(l)})}$ and 
			\item $D= \pi_1\cup\ldots\cup \pi_k$ is the decomposition of $D$ into maximal connected sets.
		\end{itemize}
		Now, let us consider the term order $\prec$ on $\rr[Q]$ defined by a lexicographic order with the condition that if $|\supp(i)| < | \supp(j)|$, then $q_i\prec q_j$. Therefore, for each generator $f_i$ of $J_\mathcal{C}$, we have that $\mbox{lt}_\prec(f_i) = q_i$. 
		
		This is a Gr\"obner basis for $J_\mathcal{C}$ since the leading terms of any pair of equations are coprime, so any $S$-pair reduces to $0$ modulo the basis of difference of binomials.  Thus, the initial ideal of $J_\mathcal{C}$, ${\rm in}_\prec(J_{\mathcal{C}})$ is generated by linear terms, so it is prime. This implies that $J_\mathcal{C}$ is a prime ideal generated by differences of binomials, hence it is toric. 
	\end{proof}

The explicit description of the generators of $J_\mathcal{C}$ also allows us to show that its associated affine variety is smooth. 
\begin{ex}
Consider the split closed order ideal $\mathcal{C}$ on 3 binary random variables generated by the independence statement $X_1\ind X_{2,3}$. Then, by Corollary \ref{cor:decmp}, 
$$J_{\mathcal C} = \left\langle q_{12}-q_1q_2,q_{13}-q_1q_3, q_{123}-q_1q_{23} \right\rangle$$
Hence, the Jacobian of $J_\mathcal{C}$ is 
\begin{equation*}
	\mbox{Jac}(J_\mathcal{C}) = \begin{bmatrix}
		0&-q_2 & -q_1 & 0 & 1 & 0 & 0 & 0\\
		0&-q_3 & 0 & -q_1 & 0 & 1 & 0 & 0\\
		0&-q_{23} & 0 & 0 & 0 & 0 & -q_1 & 1\\
	\end{bmatrix}.
\end{equation*}
Observe that $J_\mathcal{C}$ is smooth everywhere since its Jacobian has full rank independent of the point of evaluation. 
\end{ex}

\begin{thm}
For a split closed order ideal $\mathcal{C}\subseteq P\Pi_{n,2}$, the affine variety $V(J_{\mathcal{C}})$ is smooth in cdf coordinates.
\end{thm}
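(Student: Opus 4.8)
The plan is to leverage the explicit description of the generators of $J_\mathcal{C}$ furnished by the preceding toric theorem, and then either exhibit an isomorphism of $V(J_\mathcal{C})$ with affine space or compute the Jacobian directly. Recall that $J_\mathcal{C}$ is generated by the maximal equations
\[
f_i = q_i - q_{j^{(1)}}\cdots q_{j^{(k)}},
\]
one for each $i \in \mathcal{R}$ with $\supp(i)$ disconnected, where $\supp(i) = \pi_1 \cup \cdots \cup \pi_k$ is the decomposition into maximal connected sets and $\supp(j^{(l)}) = \pi_l$. The crucial feature I would record first is that every variable $q_{j^{(l)}}$ appearing in a monomial part has \emph{connected} support, whereas the distinguished variable $q_i$ has \emph{disconnected} support. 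Thus the cdf variables split into two disjoint families indexed respectively by connected and disconnected supports, and no disconnected-support variable ever occurs inside a monomial part.

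Next I would observe that, by the uniqueness of the maximal-connected-set decomposition (Proposition \ref{prop:decomp}), there is exactly one generator $f_i$ per disconnected-support index $i$. Hence each disconnected-support variable $q_i$ occurs across the entire generating set only once, namely linearly with coefficient $1$ as the leading term of its own generator $f_i$. This is precisely the configuration that permits elimination. The cleanest route is then to note that the generators solve for each disconnected-support coordinate as a polynomial in the connected-support coordinates: the projection forgetting the disconnected-support coordinates is an isomorphism from $V(J_\mathcal{C})$ onto the affine space $\mathbb{A}^M$ with $M = \#\{m \in \mathcal{R} : \supp(m)\text{ connected}\}$, with inverse the parametrization $q_i \mapsto \prod_l q_{j^{(l)}}$. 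Algebraically, $\rr[Q]/J_\mathcal{C} \cong \rr[q_m : \supp(m)\text{ connected}]$ is a polynomial ring, so $V(J_\mathcal{C})$ is isomorphic to affine space and therefore smooth.

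Alternatively, and to make smoothness fully transparent via the Jacobian criterion, I would order the rows of the Jacobian by the generators $f_i$ and single out the columns indexed by the disconnected-support variables; the structural observation above shows that this square submatrix is the identity matrix, independent of the point of evaluation (exactly the phenomenon in the worked $X_1 \ind X_{23}$ example, where the columns of $q_{12}, q_{13}, q_{123}$ carry the three pivots). Thus the Jacobian has full row rank everywhere, equal to the number of generators, and combined with the fact that $J_\mathcal{C}$ is prime of codimension equal to that number (the generators form a regular sequence, as also follows from the isomorphism), the Jacobian criterion yields smoothness at every point. The only genuinely nontrivial step is the structural claim that the monomial parts involve solely connected-support variables and that each disconnected-support variable appears in a unique generator; both follow from the uniqueness in Proposition \ref{prop:decomp} and Corollary \ref{cor:decmp}, so I expect no real obstacle beyond this bookkeeping.
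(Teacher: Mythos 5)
Your proposal is correct, and it in fact contains two arguments. Your ``alternative'' Jacobian computation is precisely the paper's proof: the paper observes that for index vectors $i, j$ with disconnected support, $\partial f_i / \partial q_j$ equals $1$ if $j = i$ and $0$ otherwise --- exactly your structural claim that each monomial part $q_{j^{(1)}}\cdots q_{j^{(k)}}$ involves only connected-support variables (since the $\pi_l$ are maximal connected sets) and that each disconnected-support variable appears in exactly one generator, linearly --- so the Jacobian contains an identity submatrix of size equal to the number of generators and has full rank at every point. Your primary route, elimination of the disconnected-support coordinates, is genuinely different and strictly stronger: since $f_i$ solves for $q_i$ as a monomial in connected-support variables only, $V(J_\mathcal{C})$ is the graph of the polynomial map sending $(q_m)_{\supp(m)\ \mathrm{connected}}$ to $\bigl(\prod_l q_{j^{(l)}}\bigr)_{\supp(i)\ \mathrm{disconnected}}$, hence isomorphic to affine space $\mathbb{A}^M$; equivalently $\rr[Q]/J_\mathcal{C}$ is a polynomial ring. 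This buys more than the paper's argument does: smoothness, irreducibility (so an alternative proof that $J_\mathcal{C}$ is prime, independent of the Gr\"obner basis argument in the toric theorem), and the precise global structure of the variety all follow at once, whereas the Jacobian computation certifies only smoothness. A minor point in your favor even on the shared route: you explicitly match the rank of the Jacobian against the codimension (noting the generators form a regular sequence) before invoking the Jacobian criterion, a step the paper elides --- harmless here, since rank equal to the number of generators at every point of the zero locus already forces a smooth local complete intersection, but your version is the more carefully stated one.
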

\begin{proof}
	It suffices to show that the Jacobian of $J_\mathcal{C}$ is full rank. Observe that $J_\mathcal{C}$ is generated by equations of the form 
		\begin{equation}
			f_i = q_i - q_{j^{(1)}}\ldots q_{j^{(k)}}
		\end{equation}
		where 
		\begin{itemize}
			\item $D=\supp(i)$, $\pi_l = \supp(j^{(l)})$, 
			\item $i_{\supp(j^{(l)})}=j^{(l)}_{\supp(j^{(l)})}$ and 
			\item $D= \pi_1\cup\ldots\cup \pi_k$ is the decomposition of $D$ into maximal connected sets.
		\end{itemize}
		for all disconnected $D$. Hence, the equations that appear on the generating set are precisely those indexed by $i$ such that $\supp(i)$ is disconnected. In addition,
		observe that we have that for all index vectors $i$ and $j$ with disconnected support,
		$$\frac{\partial}{\partial q_j}f_i=\begin{cases}
			1 & \mbox{if $j=i$}\\
			0 & \mbox{else}
		\end{cases}.$$
		This follows because of the uniqueness of the decomposition of $q_i$. Hence, the columns of the Jacobian contain some permutation of the columns of the identity matrix of size $\#\{i\in [r_1]\times \cdots \times [r_n]: \supp(i) \mbox{ is disconnected}\}$, which shows that $\mbox{Jac}(J_{\mathcal{C}})$ has full rank everywhere. 
\end{proof}

We are now ready to prove a result about existence of distributions in the binary case.
\begin{prop}\label{prop:modexist}
For each split closed order ideal $\mathcal{C}\subseteq P\Pi_{n,2}$, there exists a probability distribution $P$ in the binary model $\mathcal{M}_{\mathcal{C}}$ such that $P \in \Delta_{2^n}\setminus\partial(\Delta_{2^n})$ and $P$ satisfies all the marginal independence statements in $\mathcal{C}$ and no other marginal independence statements.
\end{prop}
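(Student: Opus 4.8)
The plan is to use the toric parametrization of $V(J_\mathcal{C})$ in cdf coordinates together with a perturbation of the uniform distribution. By Corollary \ref{cor:decmp} (in the binary case, the remark following it), a point of $V(J_\mathcal{C})$ is determined by freely assigning the coordinates $q_C$ for connected sets $C \in \Con(\mathcal{C})$ (with $q_\emptyset = 1$) and setting $q_D = q_{C_1}\cdots q_{C_r}$ whenever $D$ is disconnected with maximal connected decomposition $D = C_1 \cup \cdots \cup C_r$. Thus $V(J_\mathcal{C}) \cap \{q_\emptyset = 1\}$ is the image of the polynomial map $\Phi$ sending the free parameters $(q_C)_{C \in \Con(\mathcal{C})}$ to the probability tensor $P$ obtained by filling in all $q_D$ by the product rule and then applying Möbius inversion. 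Every point in the image lies in $\mathcal{M}_\mathcal{C}$ by construction, so it remains to locate a single free-parameter vector whose image is strictly positive and violates every statement not in $\overline{\mathcal{C}} = \mathcal{C}$.

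First I would record what it means to satisfy an extra statement. For binary variables a statement $\sigma = A_1 | \cdots | A_m$ with $S = |\sigma|$ holds for $P$ exactly when $q_T = \prod_{j} q_{T \cap A_j}$ for all $T \subseteq S$ (taking the index $x$ supported on $T$ in Definition \ref{def:margmult}). Hence $P$ \emph{fails} $\sigma$ as soon as this identity fails for one $T$. Now suppose $\sigma \notin \mathcal{C}$. If $S$ is not contained in any generator base, then $S$ is connected, and I take $T = S$: the right-hand side $\prod_j q_{A_j}$ is a product of coordinates indexed by the proper subsets $A_j \subsetneq S$, so after substituting the product rule it is a polynomial in the free parameters not involving the free variable $q_S$, whereas the left-hand side is exactly $q_S$. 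If instead $S \subseteq |\pi^{(i)}|$ for some generator, then by Corollary \ref{cor:corralg} and Proposition \ref{prop:decomp} the failure $\sigma \not\le |\sigma|^\mathcal{C}$ means some maximal connected component $C_t$ of $S$ meets at least two blocks of $\sigma$; taking $T = C_t$, the right-hand side $\prod_j q_{C_t \cap A_j}$ is a product of coordinates indexed by proper subsets of $C_t$, none of which is $C_t$ itself, while the left-hand side is the free variable $q_{C_t}$. In either case the polynomial $h_\sigma := q_T - \prod_j q_{T \cap A_j}$, rewritten in the free parameters, contains the monomial $q_C$ (with $C = S$ or $C = C_t$) with coefficient $1$ and is therefore not identically zero.

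With this in hand I would finish by a genericity argument anchored at the uniform distribution. The uniform distribution is strictly positive and satisfies all independence statements, hence lies in $\mathcal{M}_\mathcal{C}$; it corresponds to the free-parameter vector $q^0_C = 2^{-|C|}$. Since $\Phi$ is continuous, $\Phi^{-1}(\Delta_{2^n} \setminus \partial(\Delta_{2^n}))$ is an open set containing $q^0$. On the other hand, the (finitely many) statements $\sigma \in P\Pi_{n,2} \setminus \mathcal{C}$ each cut out the proper closed set $\{h_\sigma = 0\}$ in the affine space of free parameters; being zero sets of nonzero polynomials, these are nowhere dense, so their union cannot contain the open neighborhood of $q^0$. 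Choosing a free-parameter vector $q^*$ in the open positivity set but off every $\{h_\sigma = 0\}$ yields $P^* = \Phi(q^*)$ strictly positive, lying in $\mathcal{M}_\mathcal{C}$, and failing every $\sigma \notin \mathcal{C}$; thus $P^*$ satisfies exactly the statements in $\mathcal{C}$.

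The main obstacle is the second paragraph: verifying that each extra statement $\sigma$ really produces a \emph{nonzero} constraint $h_\sigma$ on the free parameters, i.e.\ that $\mathcal{M}_\mathcal{C} \not\subseteq \mathcal{M}_\sigma$. This is where the combinatorics of split closure (via Corollary \ref{cor:corralg} and Proposition \ref{prop:decomp}) must be used to guarantee that $\sigma$ either splits a maximal connected component or has connected support, so that a genuinely free coordinate appears on only one side of the factorization. Once that is established, the positivity and genericity steps are routine, because the positive region is full-dimensional (it contains a neighborhood of the uniform point) while the forbidden loci are proper subvarieties.
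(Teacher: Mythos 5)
Your proof is correct, and it reaches the conclusion by a more elementary route than the paper's. The paper treats only \emph{minimal} non-members $\pi \notin \mathcal{C}$: it asserts that such $\pi$ have connected support, adjoins the single binomial $q_{|\pi|} - q_{\pi_1}\cdots q_{\pi_k}$ to $J_\mathcal{C}$, and argues via the Gr\"obner basis from the toric theorem that the initial ideal gains the new linear generator $q_{|\pi|}$, so the dimension strictly drops; it then combines the uniform distribution with the smoothness theorem to conclude that $V(J_\mathcal{C})\cap\Delta_{2^n}$ has full dimension and hence cannot be covered by finitely many proper subvarieties (implicitly using that a distribution violating every minimal non-member violates every non-member). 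You bypass both Gr\"obner bases and smoothness by exploiting the graph structure of $V(J_\mathcal{C})$ over the free connected coordinates, verifying for \emph{every} non-member $\sigma$ (not just minimal ones) that the induced constraint $h_\sigma$ is a nonzero polynomial in the free parameters, and finishing with a nowhere-density argument near the uniform parameter point. The combinatorial core is the same in both proofs: a non-member statement must either have connected support or have a maximal connected component meeting at least two of its blocks, so a genuinely free coordinate appears on exactly one side of its factorization equation. But your case analysis through Corollary \ref{cor:corralg} and Proposition \ref{prop:decomp} actually proves the point that the paper leaves as a bare assertion (``where $\pi$ is connected''), which is arguably the least obvious step. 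What the paper's route buys is brevity, given the machinery already established; what yours buys is self-containedness and a complete justification of the key combinatorial claim.
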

\begin{proof}
We want to show there is a point in the interior of the probability simplex that satisfies exactly the statements in $\mathcal{C}$ and no others. It suffices to show this for cdf coordinates, as the interior of the probability simplex in probability coordinates maps to the interior of the probability simplex in cdf coordinates.

Since we are using binary random variables, the description of our generators becomes much simpler, $$J_{\mathcal{C}}=\langle f_D |\; D \mbox{ disconnected} \rangle.$$
Take $\pi \notin \mathcal{C}$ to be minimal, this corresponds to an equation $g = q_\pi - q_{\pi_1}\cdots q_{\pi_k}$ where $\pi$ is connected. Let 
\[
I = J_\mathcal{C} + \langle q_\pi - q_{\pi_1}\cdots q_{\pi_k} \rangle
\]
Observe that ${\rm in}_\prec(I) = {\rm in}_\prec(J_\mathcal{C}) + \langle q_\pi \rangle$. 
So that we still get a Gr\"obner basis for $I$ composed of linear terms and therefore, $\dim(I)<\dim(J_\mathcal{C})$. Hence, we have a strict inclusion, $V(I)\subset V(J_\mathcal{C})$, showing that $V(J_\mathcal{C})\setminus V(I) \neq \emptyset$. 
Since the uniform distribution $U  \in V(J_\mathcal{C})$, it intersects the interior of the probability simplex (in cdf coordinates). Finally, since we additionally know $V(J_\mathcal{C})$ is smooth, we conclude that $\dim(V(J_\mathcal{C})) = \dim(V(J_\mathcal{C})\cap \Delta_{2^n})) $.
\end{proof}

With Proposition \ref{prop:modexist} in hand,  we are ready to prove Theorem \ref{thm:bijection}.
\begin{proof}[Proof of Theorem \ref{thm:bijection}]\

\noindent $\left(\implies\right)$
	Given a model $\mathcal{M}_\mathcal{C}$, we obtain a unique split closed order ideal given by taking the split closure of $\mathcal{C}$. We know any probability distribution satisfying $\mathcal{C}$ satisfies its closure by Proposition \ref{prop:iffsplit}.

\noindent $\left(\impliedby\right)$	
	Now let $\mathcal{C} \in P\Pi_{n,2}$ be a split closed order ideal. It suffices to show that for any $\pi\notin \mathcal{C}$, there exists a distribution $P$ that satisfies the statements in $\mathcal{C}$ and does not satisfy $\pi$, this follows directly from Proposition \ref{prop:modexist}.
\end{proof}

We have now proven the bijective correspondence between split closed order ideals and marginal independence models, which provides us with a combinatorial description of this class of models.

So far, we have been describing our models and varieties in affine space. However, Proposition \ref{prop:decomp} and Corollary \ref{cor:decmp} allow us to give an explicit homogeneous parametrization of the ideals defining our models.

\begin{cor}\label{prop:par}
	Let $\rr [\Theta]:= \rr \left[t, \theta_l^{(C)}:\; C\in \Con(\mathcal{C}),\; \supp(l) = C,\; l\in \prod_{i=1}^n[r_i]\right]$. A homogeneous parametrization of the model in cdf coordinates is given by the homomorphism between polynomial rings $\phi_\mathcal{C}: \rr [Q] \rightarrow \rr[\Theta]$ defined by
	\begin{equation*}
		\phi_\mathcal{C}(q_{i_1\ldots i_n}) = t \prod_{j=1}^k \theta_{i_{\pi_j}}^{(\pi_j)},
	\end{equation*}
	where $\supp(i) = \pi_1\cup \ldots \cup \pi_k$ is the decomposition of $\supp(i)$ into maximal connected sets.
	\end{cor}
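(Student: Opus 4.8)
The plan is to identify $\ker(\phi_\mathcal{C})$ with the homogenization of the already-toric ideal $J_\mathcal{C}$ with respect to the coordinate $q_{r_1\cdots r_n}$, so that the closure of the image of $\phi_\mathcal{C}$ is exactly the projective variety cut out by the homogenized defining equations. The first point to record is that $\phi_\mathcal{C}$ is well defined and, crucially, that its presence of the single variable $t$ makes $\ker(\phi_\mathcal{C})$ homogeneous in the standard grading of $\rr[Q]$. Indeed, by Proposition \ref{prop:decomp} every support $\supp(i)$ has a unique decomposition into maximal connected sets, so each $q_i$ is sent to a well-defined monomial $t\prod_j\theta^{(\pi_j)}_{i_{\pi_j}}$ carrying exactly one factor of $t$. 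Consequently $\phi_\mathcal{C}(q^a)$ contains $t^{|a|}$ with $|a|=\sum_i a_i$, and any binomial $q^a-q^b\in\ker(\phi_\mathcal{C})$ must satisfy $|a|=|b|$; this is precisely what forces the kernel to be homogeneous.

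Next I would verify the vanishing of the homogenized generators. Since $q_{r_1\cdots r_n}$ has empty support, the empty product yields $\phi_\mathcal{C}(q_{r_1\cdots r_n})=t$, so $q_{r_1\cdots r_n}$ is the homogenizing variable. A generator $f_i=q_i-q_{j^{(1)}}\cdots q_{j^{(k)}}$ of $J_\mathcal{C}$ homogenizes to $\tilde f_i=q_{r_1\cdots r_n}^{\,k-1}q_i-q_{j^{(1)}}\cdots q_{j^{(k)}}$, and a direct computation using the matching condition $i_{\supp(j^{(l)})}=j^{(l)}_{\supp(j^{(l)})}$ (together with the fact that each $\supp(j^{(l)})=\pi_l$ is maximal connected, so $q_{j^{(l)}}\mapsto t\,\theta^{(\pi_l)}_{i_{\pi_l}}$) shows both terms of $\tilde f_i$ map to $t^{k}\prod_j\theta^{(\pi_j)}_{i_{\pi_j}}$. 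Thus $\phi_\mathcal{C}(\tilde f_i)=0$, and specializing $t=1$ gives $J_\mathcal{C}\subseteq\ker(\phi_\mathcal{C}|_{t=1})$.

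The main work is the reverse inclusion, which I would reduce to an affine dimension count. Both $\ker(\phi_\mathcal{C}|_{t=1})$ (as the kernel of a monomial map) and $J_\mathcal{C}$ (by the toricity theorem proved above) are prime, and we already have one containment, so it suffices to match dimensions. The Gr\"obner basis from the proof of toricity gives $\mathrm{in}_\prec(J_\mathcal{C})=\langle q_i:\supp(i)\text{ disconnected}\rangle$, so the standard monomials involve only the $q_i$ with connected support and $\dim V(J_\mathcal{C})=\#\{i\in\mathcal{R}:\supp(i)\text{ connected}\}$. On the parametrization side, each $q_i$ with connected support is sent by $\phi_\mathcal{C}|_{t=1}$ to the single variable $\theta^{(\supp(i))}_{i}$, and $i\mapsto\theta^{(\supp(i))}_{i}$ is a bijection onto the (algebraically independent) set of $\theta$-variables; hence the image has dimension equal to that same count. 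Since $\overline{\mathrm{im}(\phi_\mathcal{C}|_{t=1})}\subseteq V(J_\mathcal{C})$ are both irreducible of equal dimension, they coincide, giving $\ker(\phi_\mathcal{C}|_{t=1})=J_\mathcal{C}$. Finally, because $\ker(\phi_\mathcal{C})$ is homogeneous, prime, and dehomogenizes to $J_\mathcal{C}$ with $q_{r_1\cdots r_n}\notin\ker(\phi_\mathcal{C})$, it equals the homogenization $\widetilde{J_\mathcal{C}}$, which completes the identification.

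I expect the dimension count in the last paragraph to be the main obstacle. One must verify with care both that the initial ideal from the toricity proof is generated exactly by the disconnected variables, so that the free coordinates of $V(J_\mathcal{C})$ are precisely the connected $q_i$, and that the correspondence $q_i\mapsto\theta^{(\supp(i))}_{i}$ for connected $i$ is a bijection onto the generating set of $\theta$-variables with algebraically independent images; together these pin down that the parametrized variety attains the full expected dimension and hence fills out all of $V(J_\mathcal{C})$.
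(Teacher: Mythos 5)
The first parts of your proposal are fine: well-definedness of $\phi_\mathcal{C}$ via Proposition \ref{prop:decomp}, homogeneity of the kernel from the single factor of $t$, and the vanishing of the homogenized generators $\tilde f_i$ are all correct. The genuine gap is in the central dimension count, and in fact the equality $\ker(\phi_\mathcal{C}|_{t=1})=J_\mathcal{C}$ that it is meant to establish is false. No generator $f_i$ of $J_\mathcal{C}$ involves the coordinate $q_{r_1\cdots r_n}$ (every factor $q_{j^{(l)}}$ occurring in an $f_i$ has nonempty support $\pi_l$), so $q_{r_1\cdots r_n}$ is a \emph{free} coordinate on $V(J_\mathcal{C})$; but, as you yourself note, $\phi_\mathcal{C}(q_{r_1\cdots r_n})=t$, so $\phi_\mathcal{C}|_{t=1}$ sends it to the constant $1$, whence $q_{r_1\cdots r_n}-1\in\ker(\phi_\mathcal{C}|_{t=1})\setminus J_\mathcal{C}$. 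Your claimed bijection ``connected $i\mapsto\theta^{(\supp(i))}_i$'' breaks at exactly this one index: the empty-support coordinate goes to the empty product, not to a $\theta$-variable. Consequently $\dim V(J_\mathcal{C})=\#\{i:\supp(i)\text{ connected}\}$ (empty support included) while the closure of the image of $\phi_\mathcal{C}|_{t=1}$ has dimension one less; geometrically, $V(J_\mathcal{C})$ is a cylinder in the $q_{r_1\cdots r_n}$-direction, while the image lies in the hyperplane $q_{r_1\cdots r_n}=1$. So the containment $J_\mathcal{C}\subseteq\ker(\phi_\mathcal{C}|_{t=1})$ is strict, and your primality-plus-dimension argument, carried out carefully, refutes rather than proves the asserted equality.

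The repair is to compare $\ker(\phi_\mathcal{C}|_{t=1})$ with $J_\mathcal{C}+\langle q_{r_1\cdots r_n}-1\rangle$ instead of with $J_\mathcal{C}$: that ideal is still prime (the leading terms $q_{r_1\cdots r_n}$ and $q_i$, $\supp(i)$ disconnected, remain coprime, so the same Gr\"obner argument applies), both sides now have dimension $\#\{i:\supp(i)\text{ nonempty connected}\}$, and the final identification becomes $\ker\phi_\mathcal{C}=\overline{J_\mathcal{C}}$, the homogenization of $J_\mathcal{C}$ with respect to $q_{r_1\cdots r_n}$ --- which is exactly what the paper asserts in the proposition following Corollary \ref{prop:par}. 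Alternatively, and this is the route the paper intends by ``follows from construction,'' no dimension count is needed at all: Corollary \ref{cor:decmp} says that a cdf tensor lies in the model if and only if every disconnected-support coordinate equals the product of the coordinates of its maximal connected components, with the nonempty connected-support coordinates unconstrained and $q_{r_1\cdots r_n}=1$. That statement \emph{is} the surjectivity of the dehomogenized parametrization onto the model, so the reverse inclusion comes for free, and homogenizing with $t$ yields the corollary directly.
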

\begin{rmk}
In the binary case, we can lose the subscript on our parameters $\theta$ as a consequence of the reindexing of binary cdf coordinates with their support.
	
\end{rmk}

\begin{defn}
	The \emph{homogeneous marginal independence ideal} ($I_{\mathcal{C}}$) associated to the split closed order ideal $\mathcal{C}$ on discrete random variables $X_1,\ldots,X_n$ is given by 
	\begin{equation*}
			I_{\mathcal{C}} = \ker \phi_\mathcal{C},
	\end{equation*}
	where $\phi_\mathcal{C}$ is the parametrization given in Corollary $\ref{prop:par}$.
\end{defn}

\begin{prop}
	For a split order model ideal $\mathcal{C}$, we have
	$$I_\mathcal{C} = \overline{J_{\mathcal{C}}},$$
	where $\overline{J_{\mathcal{C}}}$ is the homogenization of $J_\mathcal{C}$. 
\end{prop}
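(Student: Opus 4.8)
The plan is to recognize that the homogenization is taken with respect to the distinguished variable $q_{r_1\cdots r_n}$, and then to identify both $I_\mathcal{C}$ and $\overline{J_\mathcal{C}}$ as the vanishing ideal of one and the same irreducible projective variety. First I would record that $\phi_\mathcal{C}(q_{r_1\cdots r_n}) = t$, since $\supp(r_1\cdots r_n) = \emptyset$ and the corresponding product over maximal connected sets is empty. Thus $q_{r_1\cdots r_n}$ is exactly the coordinate playing the role of the homogenizing variable: specializing $q_{r_1\cdots r_n} = 1$ corresponds to setting $t = 1$ in $\phi_\mathcal{C}$, which recovers the affine monomial parametrization $\psi\colon q_i \mapsto \prod_{j}\theta_{i_{\pi_j}}^{(\pi_j)}$. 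Both $I_\mathcal{C} = \ker\phi_\mathcal{C}$ and $\overline{J_\mathcal{C}}$ are homogeneous ideals of $\rr[Q]$, so it suffices to prove an equality of homogeneous ideals.

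The key structural input is the dehomogenization/homogenization bijection: the operations $I \mapsto I|_{q_{r_1\cdots r_n}=1}$ and $J \mapsto \overline{J}$ are mutually inverse between the homogeneous ideals of $\rr[Q]$ that are saturated with respect to $q_{r_1\cdots r_n}$ and the arbitrary ideals of the dehomogenized ring $\rr[Q]/\langle q_{r_1\cdots r_n}-1\rangle$. I would check that both ideals in question are $q_{r_1\cdots r_n}$-saturated: $\overline{J_\mathcal{C}}$ is saturated because it is a homogenization, and $I_\mathcal{C}$ is saturated because it is prime (a kernel of a ring map into a domain) and does not contain $q_{r_1\cdots r_n}$, so $q_{r_1\cdots r_n}$ is a nonzerodivisor modulo $I_\mathcal{C}$. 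It then remains only to show that the two dehomogenizations coincide. By definition $\overline{J_\mathcal{C}}|_{q_{r_1\cdots r_n}=1} = J_\mathcal{C}$, while $I_\mathcal{C}|_{q_{r_1\cdots r_n}=1} = \ker\psi$, the affine toric ideal of the monomial map $\psi$. Finally, $\ker\psi = J_\mathcal{C}$: the containment $J_\mathcal{C}\subseteq\ker\psi$ is a direct check on the generators $f_i$, and equality follows because Corollary \ref{cor:decmp} shows that $V(J_\mathcal{C})$ is exactly the image of $\psi$ (a point of $V(J_\mathcal{C})$ is reconstructed by reading off $\theta^{(C)}$ from the connected-support coordinates and observing that the disconnected-support coordinates are forced to equal the corresponding products), while both ideals are prime, hence each equals the vanishing ideal of that irreducible variety, as established in the toric theorem above. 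Applying the bijection yields $I_\mathcal{C} = \overline{J_\mathcal{C}}$.

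The step requiring the most care — and the main obstacle to a naive argument — is precisely the saturation. One is tempted to simply homogenize the generators $f_i$ to $q_{r_1\cdots r_n}^{k-1}q_i - q_{j^{(1)}}\cdots q_{j^{(k)}}$ and claim these generate $\overline{J_\mathcal{C}}$, but this fails: although the $f_i$ form a Gr\"obner basis for the term order of the toric theorem, that order is not degree-compatible (the leading term $q_i$ has degree $1$ while the binomial has degree $k\geq 2$), and indeed an $S$-polynomial such as $q_3 f_{12} - q_2 f_{13}$ produces a homogeneous binomial like $q_2 q_{13} - q_3 q_{12}$ that lies in $\overline{J_\mathcal{C}}$ but only in the $q_{r_1\cdots r_n}$-saturation of the ideal generated by the homogenized $f_i$. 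Framing the proof through the saturated-ideal bijection (equivalently, through the projective closure of the irreducible variety $V(J_\mathcal{C})$) sidesteps this entirely. A minor additional point is that the primeness and Nullstellensatz-type statements should be run over $\cc$ and the resulting ideal equality descended to $\rr[Q]$, which is harmless since homogenization and the formation of $\ker\phi_\mathcal{C}$ commute with the flat base change $\rr\hookrightarrow\cc$.
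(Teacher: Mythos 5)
Your proof is correct, but it is not really ``the same approach as the paper'' for the simple reason that the paper offers no argument at all: its entire proof is the one sentence that the identity follows from the construction in Corollary \ref{prop:par} and the definition of $I_\mathcal{C}$. What you supply is the actual mechanism. You run the equality through the standard bijection between ideals of the dehomogenized ring and $q_{r_1\cdots r_n}$-saturated homogeneous ideals, verifying saturation on both sides ($\overline{J_\mathcal{C}}$ because homogenizations are always saturated; $I_\mathcal{C}$ because it is prime and does not contain $q_{r_1\cdots r_n}$, as $\phi_\mathcal{C}(q_{r_1\cdots r_n})=t$), and then identify both dehomogenizations with $J_\mathcal{C}$. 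The substantive half of that identification, $\ker\psi = J_\mathcal{C}$ for the affine monomial map $\psi$ obtained by setting $t=1$, you derive correctly from the factorization in Corollary \ref{cor:decmp} (every point of $V(J_\mathcal{C})$ is literally in the image of $\psi$, since the connected-support coordinates can be read off as parameters and the disconnected-support coordinates are forced to be the corresponding products), primality of $J_\mathcal{C}$ from the toric theorem, the Nullstellensatz over $\cc$, and flat descent to $\rr$. Your warning that one cannot simply homogenize the generators $f_i$ is well taken, and it is exactly the phenomenon the paper itself records in Example \ref{ex:2st}: minimal generators of $I_\mathcal{C}$ such as $q_{134}q_{234}-q_{34}q_{1234}$ lie in $\overline{J_\mathcal{C}}$ but not in the ideal generated by the homogenized $f_i$, only in its $q_{r_1\cdots r_n}$-saturation; and because the term order used in the toric theorem is not degree-compatible (the leading terms $q_i$ have degree $1$ while the trailing monomials have degree $k\geq 2$), the usual shortcut of homogenizing a Gr\"obner basis with respect to a graded order is unavailable. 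In short, your route yields a genuine proof and isolates the saturation subtlety; the paper's one-line appeal to the construction buys brevity at the cost of hiding precisely that issue.
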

\begin{proof}\
This follows from construction of the parametrization in Corollary \ref{prop:par} and the definition of $I_\mathcal{C}$.
\end{proof}

In general, toric varieties are defined by a monomial map with an associated integer matrix $A_\mathcal{C}$. This parametrization allows us to compute this integer matrix and the associated polytope $\mathfrak{P}_\mathcal{C}$ given by the convex hull of the columns of $A_\mathcal{C}$.
\begin{ex}\label{ex:2st}
	Consider the  model on 4 binary variables given by $X_{1} \ind X_{2} \ind X_{3,4}$. Then, we have 16 cdf coordinates, and the equations that define $J_\mathcal{C}$ are:
	\begin{equation*}
		\begin{aligned}
		q_{12}-q_{1}q_{2} &, &
		q_{13}-q_{1}q_{3} &, &
		q_{14}-q_{1}q_{4} &, &
		q_{23}-q_{2}q_{3} &, &
		q_{24}-q_{2}q_{4} &, \\
		q_{234}-q_{2}q_{34} &, &
		 q_{123} - q_{1}q_{2}q_{3} &,&
		q_{124} - q_{1}q_{2}q_{4} &, &
		q_{1234} - q_{1}q_{2}q_{34}
	\end{aligned}
	\end{equation*}

From this description, we get the generators for $J_{\mathcal{C}}$ and consequently all coordinates with disconnected support. Hence, the parametrization described in Corollary \ref{prop:par} yields the following integer matrix:
	\begin{equation*}
		A_\mathcal{C} = 
    \kbordermatrix{ & q & q_1 & q_2& q_3& q_4 & q_{12} & q_{13} & q_{14} & q_{23}& q_{24} &q_{34} & q_{123} & q_{124} & q_{134}& q_{234} & q_{1234}      \\
      				 t  & 1 & 1 & 1 & 1 & 1 & 1 & 1 & 1 & 1 & 1 & 1 & 1 & 1 & 1 & 1 & 1\\
      \theta^{(1)}  & 0 & 1 & 0 & 0 & 0 & 1 & 1 & 1 & 0 & 0 & 0 & 1 & 1 & 1 & 0 & 1 \\
      \theta^{(2)}  & 0 & 0 & 1 & 0 & 0 & 1 & 0 & 0 & 1 & 1 & 0 & 1 & 1 & 0 & 1 & 1 \\
      \theta^{(3)}  & 0 & 0 & 0 & 1 & 0 & 0 & 1 & 0 & 1 & 0 & 0 & 1 & 0 & 0 & 0 & 0 \\
      \theta^{(4)}  & 0 & 0 & 0 & 0 & 1 & 0 & 0 & 1 & 0 & 1 & 0 & 0 & 1 & 0 & 0 & 0 \\
      \theta^{(34)} & 0 & 0 & 0 & 0 & 0 & 0 & 0 & 0 & 0 & 0 & 1 & 0 & 0 & 1 & 1 & 1 }
	\end{equation*}
	Furthermore, we know that the ideal $I_\mathcal{C}$ is going to be the kernel of the monomial map given by $A_\mathcal{C}$, which, in this case we compute using Macaulay2 \cite{M2} and find that there are 46 degree 2 generators of the ideal. They include homogenized equations corresponding to the generators of $J_\mathcal{C}$
	\begin{equation*}
		\begin{matrix}
			q_{1}q_{2}-qq_{12} \;& q_{1}q_{3}-qq_{13}\;& q_{1}q_{4}-qq_{14} \;&q_{2}q_{3}-qq_{23} \;&  q_{2}q_{4}-qq_{24}\;& q_{1}q_{34}-qq_{134}\\
			q_{2}q_{34}-qq_{234}\;&q_{1}q_{23}-qq_{123}\;&q_{2}q_{13}-qq_{123}  \;&q_{3}q_{12}-qq_{123}\;&q_{1}q_{24}-qq_{124}\; &q_{2}q_{14}-qq_{124}\\
     		q_{4}q_{12}-qq_{124}&q_{1}q_{234}-qq_{1234}\;& q_{2}q_{134}-qq_{1234}\;& q_{12}q_{34}-qq_{1234}.
		\end{matrix}
	\end{equation*}
	However, we find more equations in the minimal generators of the homogeneous ideal, such as
	\begin{equation*}
	\begin{matrix}
			&q_{134}q_{234}-q_{34}q_{1234}\;& q_{124}q_{234}-q_{24}q_{1234}\;& q_{123}q_{234}-q_{23}q_{1234}\;& q_{14}q_{234}-q_{4}q_{1234}\\
		\end{matrix}
	\end{equation*}
	which are not necessary in the de-homogenized cdf coordinates since they are implied by the original equations (when $q=1$).	
\end{ex}

\section{Graphical and simplicial marginal independence models}\label{sec:graphsimp}
Two particularly interesting classes of marginal independence models are discussed in \cite{boege} and in \cite{drton}. They are graphical models and what we will refer to as simplicial models. We aim to contrast these different classes of models and show how they fit in the framework presented above.
\subsection{Graphical models}
Graphical models have been studied extensively because of their many applications and interesting structure that comes with a graphical representation of the  independence relations. Different types of graphs are used to express different relationships between variables: directed acyclic graphs are been used to study causality in Bayesian networks, undirected graphs on the other hand, are used to study conditional independence models. Following the work of Drton and Richardson in \cite{drton}, we use bidirected graphs as a framework to discuss marginal independence. Hence, for this section, $G = (V,E)$ will be a bidirected graph with vertex set $V = [n]$  and a set of edges $E$.
\begin{defn}
	Let $G = (V,E)$ be a bidirected graph, for a vertex $v\in V$ we define the \textit{spouse} of $v$ as the set of all vertices that share an edge with $v$ and $v$ itself, i.e., $\spou(v):=\{w\in V:\; (w,v) \in E\} \cup \{v\}$. For a subset $A\subseteq V$, we define $\spou(A) = \cup_{v\in A} \spou(v)$.
\end{defn}
There are several different ways in which we can read different kinds of independent statements from a graph. The rules for reading a graph are called \textit{Markov properties}, we are particularly interested in two sets of Markov properties: the global Markov property and the connected set Markov property.

\begin{defn}
	Let $G = \left([n],E\right)$ be a bidirected graph. A probability distribution is said to satisfy the \emph{global Markov property} if for all $A,B,C \subseteq [n]$ pairwise disjoint such that $[n]\setminus A\cup B \cup C$ separates $A$ and $B$, we have that
	$$X_{A}\ind X_{B} |X_{C}.$$
\end{defn}

\begin{defn}
	A probability distribution $P$ is said to satisfy the \emph{connected set Markov property} associated to a bidirected graph $G$ if $$X_C \ind X_{V\setminus \spou(C)}$$
	where $C$ is a connected subgraph of $G$.
\end{defn}

	The global Markov property contains all of the marginal statements given by the connected set Markov property since for any connected set $C$, we have that $C$ is separated from $V\setminus \spou(C)$ by $\spou(C)\setminus C = [n]\setminus(C \cup [n]\setminus\spou(C))$ (in this case, the third set would be empty). Even though the global Markov property has conditional statements that are not given by the connected set property, we know that a probability distribution $P$ satisfies the global Markov property if and only if it satisfies the connected set property. 
	
		Additionally, since $v\in V$ is connected, the connected set Markov rules imply that if $u,v\in V$ do not have an edge between them, then $X_v \ind X_u$. This is known as the \emph{pairwise Markov property}, hence, both the global and connected set Markov properties imply all the statements in the pairwise Markov property.
		
\begin{ex}
Consider the chain with 4 vertices.

The connected set Markov property yields $X_{1}\ind X_{3,4}$ and $X_{4}\ind X_{1,2}$ and the global Markov property gives us $X_1\ind X_4 |X_{2,3}$ as well.
\end{ex}

\begin{defn}[Graphical model]
	The marginal independence graphical model associated to $G$, $\mathcal{M}_G$, is given by the set of distributions $P = (p_{i_1,\ldots,i_n})$ and discrete random variables $X_1,\ldots,X_n$ that satisfy the marginal independence statements given by the connected set Markov property of $G$.
\end{defn}
\begin{ex}
	Let $G$ be the graph on 3 vertices with the edge $(1,2)$ and binary random variables.

Then, the marginal independence statements in this model are given by $X_{1,2} \ind X_3$ and the equations defining $J_\mathcal{C}$ are
\begin{equation*}
	 	\begin{aligned}
	 		q_{123}-q_{12}q_3,\;\;&
	 		q_{13}-q_1q_3,\;\; &
	 		q_{23}-q_2q_3.
	 	\end{aligned}
	 \end{equation*}
	 These are the same defining equations for the model as the ones in Example \ref{ex:onest}. However, they represent slightly different models since here we have 3 random variables instead of 4.
\end{ex}
In general, given a graph $G$, the collection $\mathcal{C}$ of the marginal independence statements is defined by all the connected set Markov statements associated to the graph. This means that we can apply the results from Section \ref{sec:discretemods} to this special class of models. Furthermore, the notion of connectedness relative to a collection of statements $\mathcal{C}$ agrees with the notion of connectedness of subgraphs of the corresponding graph $G$.

\begin{prop}\label{prop:connec}
	Let $G$ be a bidirected graph and $\mathcal{C}$ the split closed order ideal defined by the connected set Markov statements. Then a set $D \subseteq [n]$ is disconnected with respect to $\mathcal{C}$ if and only if the corresponding subgraph with vertices in $D$ is disconnected.
\end{prop}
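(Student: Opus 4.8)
The plan is to prove the two implications separately. For the direction asserting that disconnectedness with respect to $\mathcal{C}$ forces disconnectedness of the induced subgraph, I would first isolate the following invariant: every $\pi = \pi_1|\cdots|\pi_k \in \mathcal{C}$ has no edge of $G$ running between two distinct blocks $\pi_i$ and $\pi_j$. I would prove this by induction on the construction of the split closure. The base case is the set of generators $C|(V \setminus \spou(C))$ coming from the connected set Markov property: if $v \in V \setminus \spou(C)$ were adjacent to some $w \in C$, then $v \in \spou(w) \subseteq \spou(C)$, a contradiction, so these generators have no cross-block edges. I would then check the invariant is preserved by the two operations that build $\mathcal{C}$. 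Passing downward in the poset amounts to either deleting an element of a block (which only removes vertices) or merging two blocks (which can only turn cross-block edges into intra-block edges); applying a splitting $\tau = \tau_1|\tau_2$ to a block $\pi_i = \tau_1 \cup \tau_2$ keeps the invariant because $\tau$ itself has no edge between $\tau_1$ and $\tau_2$, while $\tau_1, \tau_2 \subseteq \pi_i$ inherit the absence of edges to every other block $\pi_l$.

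Granting the invariant, the forward direction is immediate: if $D$ is disconnected with respect to $\mathcal{C}$, pick $\pi \in \mathcal{C}$ with $|\pi| = D$; then $D$ is partitioned into $k \geq 2$ nonempty blocks with no edges between them, so every edge of the induced subgraph on $D$ lies inside a single block and the subgraph is disconnected.

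For the converse, suppose the induced subgraph on $D$ is disconnected, with connected components $D_1, \ldots, D_m$, $m \geq 2$. Each $D_i$ is a connected subgraph of $G$, so $D_i|(V \setminus \spou(D_i))$ is a generating statement of $\mathcal{C}$. Since distinct components carry no edges between them, $D_j \cap \spou(D_i) = \emptyset$ for $j \neq i$, hence $\cup_{j \neq i} D_j \subseteq V \setminus \spou(D_i)$; because $\mathcal{C}$ is an order ideal and $D_i|(\cup_{j \neq i} D_j)$ lies below the generator $D_i|(V \setminus \spou(D_i))$, we get $D_i|(\cup_{j \neq i} D_j) \in \mathcal{C}$. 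I would then assemble the full partition $D_1|\cdots|D_m$ by iterated splitting: starting from $D_1|(D_2 \cup \cdots \cup D_m)$, the element $D_2|(D_3 \cup \cdots \cup D_m) \in \mathcal{C}$ splits its second block, yielding $D_1|D_2|(D_3 \cup \cdots \cup D_m)$, and repeating produces $D_1|\cdots|D_m \in \mathcal{C}$, whose ground set is $D$. Thus $D$ is disconnected with respect to $\mathcal{C}$.

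The main obstacle is the invariant lemma: one must verify that the defining property of the generators survives every step of forming the split closure, and in particular that the merging move in the downward closure and the splitting move both interact correctly with cross-block adjacency. Once the invariant is in hand, both directions follow quickly, the converse needing only the bookkeeping of the iterated splitting.
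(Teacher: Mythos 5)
Your proof is correct, and it is in fact more careful than the paper's own argument. The paper's forward direction is a two-line sketch: it asserts that, without loss of generality, a witness $\pi \in \mathcal{C}$ with $|\pi| = D$ can be taken to have two blocks with $\pi_1$ connected and $\pi_2 = V \setminus \spou(\pi_1)$, i.e.\ to be a connected set Markov generator. That reduction is never justified, and it is not literally true that every two-block element of $\mathcal{C}$ sits below a generator: for the edgeless graph on $\{1,2,3,4\}$, the element $12|34$ lies in $\mathcal{C}$ (it is below $1|2|3|4$, which arises from iterated splittings) but is not below any generator $i\,|\,(V \setminus \{i\})$. Your invariant lemma --- every element of the split closure has no edge of $G$ between distinct blocks, verified on the generators and preserved by downward closure (vertex deletion, block merging) and by splitting --- is exactly the missing ingredient that makes the forward direction airtight; equivalently, the set of partitions satisfying the invariant is itself a split closed order ideal containing the generators, hence contains $\mathcal{C}$. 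Your converse is the same idea as the paper's (take a connected component $D_1$ of $D$ and shrink the generator $D_1 | (V\setminus\spou(D_1))$ inside the order ideal), except the paper stops at the two-block element $D_1|(D\setminus D_1)$, which already has ground set $D$ and so witnesses disconnectedness; your iterated-splitting assembly of the full partition $D_1|\cdots|D_m$ is correct but not needed. In short, what your route buys is rigor precisely where the paper is sketchy, at the cost of some extra bookkeeping.
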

	
	\begin{proof}
		If $D$ is disconnected with respect to $\mathcal{C}$, then there exists $\pi \in \mathcal{C}$ such that $D=|\pi|$. Without loss of generality, we may assume that $\pi_1$ is connected with respect to $\mathcal{C}$ and that it only has two blocks, so, $\pi_2 = V\setminus\spou(\pi_1)$. Thus, the subgraph induced by $D$ is disconnected.
		
		Conversely, if the subgraph of $D$ is disconnected, then we can certainly find a connected component $\pi$ of $D$, and take $\pi | (V\setminus\spou(\pi))$ so that $D$ is disconnected with respect to our collection $\mathcal{C}$.
	\end{proof}

Proposition \ref{prop:connec} ensures that in this particular class of models, we can restate the results of Section \ref{sec:discretemods} in terms of the graph theoretic sense of connectedness, this agrees with the results presented in \cite{drton} in the binary case and allows us to extend them to the general case.

\begin{ex}\label{ex:3cyc}
	Let $\mathcal{M}_{\mathcal{C}}$ be the model on 3 binary variables given by the marginal independence statements $X_1\ind X_2$, $X_2 \ind X_3$, $X_1\ind X_3$. The only graphical model on 3 binary variables that contains these marginal independence statements is given by the graph on 3 vertices that has no edges. However, because of the connected set Markov rule, this graphical model also has the statement $X_1\ind X_2 \ind X_3$, which cannot be inferred from $\mathcal{C}$. We will show in the following section that this model is a simplicial complex model.
\end{ex}
\subsection{Simplicial complex models}
This class of models was proposed in \cite{boege} and it concerns a very different class of models. We obtain these models from a simplicial complex instead of a graph, and the way we read statements from simplices is that all of the random variables indexed by a face are completely independent. Furthermore, this can be achieved by imposing rank 1 constraints on the probability tensor indexed by any particular face, which consequently makes the factorization in cdf coordinates and the homogeneous parametrization much simpler. 
\begin{defn}[Simplicial complex model]
	Let $\Sigma \subseteq 2^{[n]}$ be a simplicial complex, then the model $\mathcal{M}_{\Sigma}$ is defined by the distributions $P = (p_{i_1\ldots i_n})$ and finite random variables such that the random variables $\{X_i: i\in \sigma\}$ are completely independent for any face $\sigma \in \Sigma$.
\end{defn}
\begin{ex}
Let $\Sigma$ be the simplicial complex on 3 binary variables defined by the 3 cycle, i.e., the marginal independence relations would be exactly the same as in Example \ref{ex:3cyc}. This shows that there are simplicial models that are not graphical models.
\end{ex}

\begin{rmk}
	We can discuss this class of models in terms of general marginal independence models by defining a collection of marginal independence statements generated by $\mathcal{C} = \left\{\sigma_1|\cdots| \sigma_k: \sigma \in \Sigma \right\}$. Thus, we can apply all of the results from Section \ref{sec:discretemods} to simplicial complex models, i.e. we get a toric model in cdf coordinates and a parametrization in terms of the connected sets. By construction, the disconnected sets of $\mathcal{C}$ are the faces of $\Sigma$ (except the vertices), and the connected sets are the non-faces and the vertices.
\end{rmk}
Next, we wish to compare graphical models and simplicial complex models. We need to be careful when comparing the two classes of models since the way we read the marginal independence statements from their corresponding structures is very different: in a simplicial complex, any face gives an marginal independence statement (in particular, edges represent marginal independence relations), whereas in graphs, any non-edge gives a marginal independence statement (and we get more from Markov rules). For this reason, when we are comparing them, it is useful to talk about the complementary graph $G'$ of $G$. 

In general, we can associate a simplicial complex model to any graphical model such that the graphical model is a submodel of the simplicial complex model.
\begin{prop}\label{prop:gsimp}
	Let $G = (V,E)$ be a graph and $G'$ be its complementary graph, $G' = (V, E')$ where $E'$ is the usual complement of $E$. Then, the associated simplicial complex, $\Sigma$, given by
	\begin{equation*}
		\Sigma(G) = \left\{F\subseteq V \; \big|\; e\nsubseteq F \; \forall e \in E\right\}
	\end{equation*}
	gives us that $\mathcal{M}_G \subseteq \mathcal{M}_{\Sigma}$. 
\end{prop}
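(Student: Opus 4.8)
The plan is to reduce the statement to a purely combinatorial claim about the split closure of the graphical collection, and then prove that claim by induction on the size of a face. First I would observe that a subset $F \subseteq V$ is a face of $\Sigma(G)$ precisely when $F$ contains no edge of $G$, i.e. $F$ is an independent set of $G$; thus $\mathcal{M}_\Sigma$ is exactly the set of distributions under which the variables indexed by each independent set of $G$ are mutually independent. Writing $\mathcal{C}$ for the split closed order ideal generated by the connected set Markov statements of $G$ (as in Proposition \ref{prop:connec}), Proposition \ref{prop:iffsplit} tells us that every $P \in \mathcal{M}_G$ satisfies exactly the statements in $\mathcal{C} = \overline{\mathcal{C}}$. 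Hence it suffices to prove that for every independent set $\{a_1, \ldots, a_m\}$ of $G$ the total independence statement $a_1 | \cdots | a_m$ lies in $\mathcal{C}$; granting this, $P$ satisfies it, and letting $\{a_1,\ldots,a_m\}$ range over all faces gives $P \in \mathcal{M}_\Sigma$.

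I would establish this combinatorial claim by induction on $m$. For $m = 2$, two non-adjacent vertices $a_1, a_2$ give the pairwise Markov statement $a_1 | a_2 \in \mathcal{C}$. For the inductive step, apply the connected set Markov property to the connected singleton $\{a_m\}$ to obtain $X_{a_m} \ind X_{V \setminus \spou(a_m)} \in \mathcal{C}$. Since $\{a_1, \ldots, a_m\}$ is independent, none of $a_1, \ldots, a_{m-1}$ is adjacent to $a_m$, so $\{a_1, \ldots, a_{m-1}\} \subseteq V \setminus \spou(a_m)$; Lemma \ref{lem:marginalimpl}(2) then yields $a_m | a_1 a_2 \cdots a_{m-1} \in \mathcal{C}$. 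By the inductive hypothesis applied to the independent set $\{a_1, \ldots, a_{m-1}\}$ we also have $a_1 | \cdots | a_{m-1} \in \mathcal{C}$. Now split the block $a_1 a_2 \cdots a_{m-1}$ of the first statement successively by the two-block partitions $a_i | a_{i+1}\cdots a_{m-1}$, each of which lies below $a_1|\cdots|a_{m-1}$ in $P\Pi_n$ and therefore belongs to the order ideal $\mathcal{C}$; split-closedness of $\mathcal{C}$ then forces $a_1 | a_2 | \cdots | a_m \in \mathcal{C}$, completing the induction.

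The one step that needs care, and which I expect to be the main obstacle, is this final assembly of the peeled-off and recursively obtained independences into a single total independence statement: it is exactly here that split-closedness of $\mathcal{C}$ is indispensable (this mirrors the derivation of Mixing from the Join and Splitting axioms discussed earlier), and one must verify that the set being refined is genuinely a block of the statement being split so that the splitting operation of Definition \ref{def:split1} applies. An alternative that sidesteps the poset bookkeeping is to argue directly with the cdf: from $X_{a_m} \ind X_{\{a_1,\ldots,a_{m-1}\}}$ one gets $F(x) = F(y^{(m)})\,F(y')$ for the relevant evaluations (those with coordinates set to $\infty$ outside $\{a_1,\ldots,a_m\}$), and substituting the factorization $F(y') = \prod_{i=1}^{m-1} F(y^{(i)})$ supplied by the inductive hypothesis gives $F(x) = \prod_{i=1}^{m} F(y^{(i)})$, which is precisely $X_{a_1} \ind \cdots \ind X_{a_m}$. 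Either route establishes $a_1 | \cdots | a_m \in \mathcal{C}$ for every independent set of $G$, and hence $\mathcal{M}_G \subseteq \mathcal{M}_\Sigma$.
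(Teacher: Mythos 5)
Your proof is correct and follows essentially the same route as the paper: identify the faces of $\Sigma(G)$ with the independent sets of $G$, and show that the corresponding total independence statements are implied by the connected set Markov statements of $G$. In fact, your inductive argument (peeling off $a_m$ via the order ideal property and then splitting successively by the two-block partitions $a_i \,|\, a_{i+1}\cdots a_{m-1}$, or equivalently the direct cdf factorization) supplies precisely the justification that the paper's own proof leaves implicit, since the paper merely asserts that every statement coming from $\Sigma(G)$ ``can be found'' in the model associated to $G$.
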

	\begin{proof}
			We need to show the containment of the ideals $I_{\Sigma(G)} \subseteq I_G$, which in turn requires us to show that any marginal independence statement from $\Sigma(G)$ can be obtained from $G$ as well. The edges (1 dimensional faces) of $\Sigma(G)$ are exactly the non-edges in $G$ and the faces of $\Sigma(G)$ are the completely disconnected subgraphs of $G$. Hence, every marginal independence statement from $\Sigma(G)$ can be found in the model associated to $G$. Thus proving that $\cm_G$ is a submodel of $\cm_{\Sigma(G)}$.
	\end{proof}
\begin{rmk}
Observe that the construction in Proposition \ref{prop:gsimp} for a given $G$ implies that the $1$-skeleton of $\Sigma(G)$ is the complimentary graph $G'$. The existence of a simplicial complex model that  contains a given graphical model raises the question of when the simplicial complex models and graphical models are isomorphic. Consider the following example:	
\end{rmk}

	\begin{ex}\label{ex:4cyc}
		Let $\mathcal{M}_G$ be the model associated to the binary 4-cycle.
	\begin{figure}
	\centering
	\subfigure[4-cycle graph $G$]{
		\begin{tikzpicture}[scale=1]
	\tikzset{myarrow/.style={postaction={decorate},
decoration={markings,
mark=at position 1 with {\arrow{latex}},
}}}
  \node (1) at (0,1.6) {$1$};
  \node (2) at (1.6,0) {$2$};
  \node (3) at (0,-1.6) {$3$};
  \node (4) at (-1.6,0) {$4$};
  \draw[myarrow] (1) -- (2) ;
  \draw[myarrow] (2) -- (1) ;
  \draw[myarrow] (2) -- (3) ;
  \draw[myarrow] (3) -- (2) ;
  \draw[myarrow] (3) -- (4) ;
  \draw[myarrow] (4) -- (3) ;
  \draw[myarrow] (1) -- (4) ;
  \draw[myarrow] (4) -- (1) ;
\end{tikzpicture}}
\qquad
\subfigure[Simplicial complex $\Sigma_G$]{
		\begin{tikzpicture}[scale=1]
	\tikzset{myarrow/.style={postaction={decorate},
decoration={markings,
mark=at position 1 with {\arrow{latex}},
}}}
  \node (1) at (0,1.6) {$1$};
  \node (2) at (1.6,0) {$2$};
  \node (4) at (0,-1.6) {$4$};
  \node (3) at (-1.6,0) {$3$};
  \draw (1) -- (3) ;
  \draw (2) -- (4) ;
\end{tikzpicture}}
\caption{Graph and simplicial complex in Example \ref{ex:4cyc}} \label{fig:graphsimp}
		\end{figure}
This gives us the marginal independence relations $X_1 \ind X_3$ and $X_2\ind X_4$. As we can see in Figure \ref{fig:graphsimp}, the simplicial complex $\Sigma_G$ associated to this graph gives us the same marginal independence statements as $G$, i.e. $\mathcal{M}_G=\mathcal{M}_{\Sigma(G)}$.
	\end{ex}

	\begin{ex}\label{ex:4chain}
		Let $\mathcal{M}_G$ be the model associated to the binary 4-chain.
	\begin{figure}
	\centering
	\subfigure[4-chain graph $G$]{
		\begin{tikzpicture}[scale=1]
	\tikzset{myarrow/.style={postaction={decorate},
decoration={markings,
mark=at position 1 with {\arrow{latex}},
}}}
  \node (1) at (-3,0) {$1$};
  \node (2) at (-1,0) {$2$};
  \node (3) at (1,0) {$3$};
  \node (4) at (3,0) {$4$};
  \draw[myarrow] (1) -- (2) ;
  \draw[myarrow] (2) -- (1) ;
  \draw[myarrow] (2) -- (3) ;
  \draw[myarrow] (3) -- (2) ;
  \draw[myarrow] (3) -- (4) ;
  \draw[myarrow] (4) -- (3) ;
\end{tikzpicture}}
\qquad
\subfigure[Simplicial complex $\Sigma_G$]{
		\begin{tikzpicture}[scale=1]
	\tikzset{myarrow/.style={postaction={decorate},
decoration={markings,
mark=at position 1 with {\arrow{latex}},
}}}
  \node (1) at (-3,0) {$3$};
  \node (2) at (-1,0) {$1$};
  \node (3) at (1,0) {$4$};
  \node (4) at (3,0) {$2$};
  \draw (1) -- (2) ;
  \draw (2) -- (3) ;
  \draw (3) -- (4) ;
\end{tikzpicture}}
\caption{Graph and simplicial complex in Example \ref{ex:4chain}.} \label{fig:graphsimpchain}
		\end{figure}
This graph gives us the following marginal independence statements: $X_1 \ind X_{3.4}$, $X_4 \ind X_{1,2}$. Its associated simplicial complex, $\Sigma_G$, is again a 4 chain that implies the following: $X_1 \ind X_3$, $X_1 \ind X_4$ and $X_2 \ind X_4$. We cannot derive the marginal independence statements given by $G$ from this	. Thus, in this case the models are not isomorphic (even though the 1-skeleton of $\Sigma(G)$ and $G$ are isomorphic as graphs). 
	\end{ex}
For a graphical model $\mathcal{M}_G$ to be isomorphic to its simplicial complex counterpart, we would need that the marginal independence statements on $G$ are only statements of complete marginal independence of individual variables, since those are the only type of marginal independence statements that we can obtain from the simplicial complex model.
\begin{prop}\label{prop:isom}
	For a graph $G$, $\mathcal{M}_G = \mathcal{M}_{\Sigma(G)}$ if the complimentary graph $G'$ of $G$ is a disjoint union of complete graphs.
\end{prop}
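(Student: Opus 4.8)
The plan is to translate the hypothesis into a statement about $G$ itself and then reduce the equality of models to the equality of the underlying collections of marginal independence statements. Since $G'$ is a disjoint union of complete graphs, say on vertex sets $V_1, \ldots, V_m$ partitioning $[n]$, two vertices are adjacent in $G'$ exactly when they lie in a common part $V_i$; taking complements, $G$ is the complete multipartite graph whose parts are $V_1, \ldots, V_m$, with no edges inside a part and all edges between distinct parts. By Proposition \ref{prop:gsimp} we already have $\cm_G \subseteq \cm_{\Sigma(G)}$, so it suffices to establish the reverse inclusion $\cm_{\Sigma(G)} \subseteq \cm_G$.

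Next I would identify both families of defining statements explicitly. The faces of $\Sigma(G)$ are precisely the independent sets of $G$, which for a complete multipartite graph are exactly the subsets contained in a single part; the maximal faces are the parts $V_i$ themselves. Hence $\cm_{\Sigma(G)}$ is the model imposing complete independence of the variables $\{X_v : v \in V_i\}$ within each part $V_i$. On the graph side, I would enumerate the connected subsets of $G$: a subset is connected iff it is a singleton or meets at least two parts, since a subset of size at least $2$ contained in a single part induces an edgeless graph and is therefore disconnected. This is exactly the content of Proposition \ref{prop:connec}.

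The key computation is the spouse set appearing in each connected set Markov statement $X_C \ind X_{[n]\setminus \spou(C)}$. For a singleton $C = \{v\}$ with $v \in V_i$, every neighbour of $v$ lies outside $V_i$, so $\spou(v) = \{v\} \cup ([n]\setminus V_i)$ and the statement reads $X_v \ind X_{V_i \setminus \{v\}}$. For a connected $C$ meeting two distinct parts $V_i$ and $V_j$, picking $v \in C \cap V_i$ and $w \in C \cap V_j$ yields $\spou(v) \cup \spou(w) \supseteq ([n]\setminus V_i) \cup ([n]\setminus V_j) = [n]$ because $V_i \cap V_j = \emptyset$, so $\spou(C) = [n]$ and the statement is vacuous. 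Thus the only nontrivial connected set Markov statements of $G$ are the statements $X_v \ind X_{V_i \setminus \{v\}}$, and each of these follows from the complete independence of the variables in $V_i$ by Lemma \ref{lem:marginalimpl}(1), grouping the blocks into $\{v\}$ and $V_i \setminus \{v\}$. Consequently every distribution in $\cm_{\Sigma(G)}$ satisfies all connected set Markov statements of $G$, giving $\cm_{\Sigma(G)} \subseteq \cm_G$ and hence equality.

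The step I expect to be the main obstacle is the spouse calculation $\spou(C) = [n]$ for every connected set meeting at least two parts: this is what collapses all the potentially troublesome large connected set statements to triviality and leaves only the within-part statements, which coincide with those of the simplicial model. Once this is established the remaining inclusions are routine, and one could alternatively phrase the entire argument through Theorem \ref{thm:bijection} by checking directly that the split closed order ideals $\overline{\mathcal{C}_G}$ and $\overline{\mathcal{C}_{\Sigma(G)}}$ coincide.
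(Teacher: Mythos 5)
Your proof is correct and follows essentially the same route as the paper: recognize that $G$ is complete multipartite, observe that the only nontrivial connected set Markov statements are the within-part statements $X_v \ind X_{V_i \setminus \{v\}}$ (since $\spou(C) = [n]$ for any connected set meeting two distinct parts), and match these against the complete-independence-within-parts statements coming from $\Sigma(G)$. If anything, your write-up is more explicit than the paper's terse argument—the spouse computation and the use of Proposition \ref{prop:gsimp} for one of the two inclusions make precise steps that the paper only asserts.
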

\begin{proof}
		Let $G'=G_1'\cup \ldots \cup G_k'$ where each $G_i'$, $i\in [k]$, is a complete graph and they are disjoint. Then, observe that each $G_i'$ corresponds to a totally disconnected subgraph of $G$ whose vertices are connected all vertices not in $G_i'$. Hence, each $G_i'$ yields a complete marginal independence statement on the random variables indexed by the vertices of $G_i'$. Furthermore, by construction, these are all the statements that we are going to get from the graph $G$. We know that those are precisely the statements that we get from $\Sigma(G)$, thus showing that the models are isomorphic.
	\end{proof}
\begin{rmk}
We can restate Proposition \ref{prop:isom} without using the complimentary graph since that condition on $G'$ is equivalent to $G$ being a multipartite graph. Additionally, we can see from this proposition that the number of nontrivial isomorphic models is going to be the number of partitions of $\#V$.
\end{rmk}
\begin{ex}
	Let $X_1,\ldots,X_4$ be finite random variables, and consider the model given by the marginal independence statements $X_{1}\ind X_{2,3,4}$ and $X_2 \ind X_3$. This model is not simplicial and not graphical, but it still is toric.
\end{ex}

\subsection{Counting models and computational results.}\

We counted all models on 3 and 4 random variables and counted up to symmetry of relabeling the random variables and obtained the following:
\begin{table}[h]
	\caption{Number of models on $3$ and $4$ binary random variables} \label{tab:count}
	\begin{tabular}{|c|c|c|c|c|}
	\hline
	& Total& Up to symmetry &Total & Up to symmetry \\
	&$n=3$&$n=3$&$n=4$&$n=4$\\
	\hline
	Graphical& 8&4 &64 &11  \\
	\hline
	Simplicial& 9&5 &114 &20 \\
	\hline
	Graphical and simplicial& 5&3 &18 &5 \\
	\hline
	General&12&6 &496 &53\\
	\hline
\end{tabular}

\end{table}

For the simplicial and graphical models, we used data from OEIS \cite{unlabeledGr,unlabeledSC,labeledSC}. For the general case, we do not yet have an efficient way of counting split closed order ideals, so we used exhaustive computation checking split closures of order ideals.

Additionally, we computed the homogeneous ideals for all models up to labeling symmetry for  $n=4$ and found the highest degree of the generators, degree and dimension of the projective variety that comes from $I_\mathcal{C}$ and whether or not the models are simplicial or graphical.
	
	\begin{longtable}{|l|l|l|l|c|c|}
\caption{Computations for all models on 4 random variables.} \label{tab:long} \\

\hline 
\multicolumn{1}{|c|}{\textbf{Split order}} & \multicolumn{1}{c|}{\textbf{Max degree}} & \multicolumn{1}{c|}{\textbf{Degree}}& \multicolumn{1}{c|}{\textbf{Dimension}}& \multicolumn{1}{c|}{\textbf{Graphical}}& \multicolumn{1}{c|}{\textbf{Simplicial}} \\
 \multicolumn{1}{|c|}{\textbf{ideal}} & \multicolumn{1}{c|}{\textbf{gens}} & \multicolumn{1}{c|}{}& \multicolumn{1}{c|}{}& \multicolumn{1}{c|}{}& \multicolumn{1}{c|}{} \\
 \hline 
\endfirsthead

\multicolumn{6}{c}%
{{\bfseries \tablename\ \thetable{} -- continued from previous page}} \\
\hline 
\multicolumn{1}{|c|}{\textbf{Split order}} & \multicolumn{1}{c|}{\textbf{Max degree}} & \multicolumn{1}{c|}{\textbf{Degree}}& \multicolumn{1}{c|}{\textbf{Dimension}}& \multicolumn{1}{c|}{\textbf{Graphical}}& \multicolumn{1}{c|}{\textbf{Simplicial}}  \\
 \multicolumn{1}{|c|}{\textbf{ideal}} & \multicolumn{1}{c|}{\textbf{gens}} & \multicolumn{1}{c|}{}& \multicolumn{1}{c|}{}& \multicolumn{1}{c|}{}& \multicolumn{1}{c|}{} \\
\hline 
\endhead

\hline \multicolumn{6}{|r|}{{Continued on next page}} \\ \hline
\endfoot

\hline
\endlastfoot
$\emptyset$&&1&15&x&x\\
$1 | 2 $&2&2&14&x&x\\
$1 | 2,  1 | 3 $&2&3&13& & x\\
$1 | 2,  3 | 4 $&2&4&13& x & x\\
$1 | 2 3 $&2&4&12& x & \\
$1 | 2,  1 | 3,  1 | 4 $&2&4&12& & x\\
$1 | 2,  1 | 3,  2 | 3 $&3&5&12&x & x\\
$1 | 2,  1 | 3,  2 | 4 $&2&5&12& & x\\
$1 | 2,  1 | 3,  1 | 4,  2 | 3 $&3&7&11& & x\\
$1 | 2,  1 | 3,  2 | 4,  3 | 4 $&2&6&11& & x\\
$1 | 2,  1 | 3 4 $&2&5&11& & \\
$1 | 2,  1 3 | 4 $&2&7&11& & \\
$1 | 2 | 3 $&2&6&11& & x\\

$1 | 2,  1 | 3,  1 | 4,  2 | 3,  2 | 4 $&3&9&10& & x\\

$1 | 2,  1 | 3,  2 | 3 4 $&3&9&10& & \\
$1 | 2,  1 | 3,  2 3 | 4 $&2&9&10& & \\
$1 | 2,  1 | 3 | 4 $&2&8&10& & x\\
$1 | 2 3,  1 | 2 4 $&2&6&10& & \\
$1 | 2 3,  1 4 | 2 $&2&10&10&x & \\

$1 | 2,  1 | 3,  1 | 4,  2 | 3,  2 | 4,  3 | 4 $&3&12&9& & x\\
$1 | 2,  1 | 3,  1 | 4,  2 | 3 4 $&3&12&9& & \\
$1 | 2,  1 | 3,  2 | 3 | 4 $&3&11&9& & x\\
$1 | 2,  1 3 | 4,  2 3 | 4 $&3&11&9& & \\
$1 | 2,  1 3 | 4,  2 4 | 3 $&2&14&9& & \\
$1 | 2 3,  1 | 2 4,  1 | 3 4 $&2&7&9& & \\

$1 | 2 3,  2 3 | 4 $&2&10&9& & \\
$1 | 2 | 3,  1 | 2 4 $&2&10&9& & \\

$1 | 2,  1 | 3,  1 | 4,  2 | 3 | 4 $&3&16&8& & x\\
$1 | 2,  1 | 3,  1 2 3 | 4 $&3&15&8& & \\
$1 | 2,  1 | 3,  1 2 3 | 4,  2 | 3 $&3&17&8& & \\
$1 | 2,  1 2 3 | 4 $&3&13&8& x& \\
$1 | 2,  1 | 3 | 4,  2 3 | 4 $&3&14&8& & \\
$1 | 2,  1 | 3 4,  2 | 3 4 $&3&14&8& & \\

$1 | 2 3,  1 4 | 2,  1 4 | 3 $&2&16&8& & \\
$1 | 2 3 4 $&2&8&8&x & \\

$1 | 2 | 3,  1 | 2 4,  1 | 3 4 $&2&12&8& & \\

$1 | 2 | 3,  1 | 2 | 4 $&2&12&8& & x\\

$1 | 2,  1 | 3,  1 2 3 | 4,  2 | 3 | 4 $&3&21&7& & \\
$1 | 2,  1 | 3 | 4,  2 | 3 | 4 $&3&18&7& & x\\
$1 | 2,  1 | 3 | 4,  1 2 3 | 4 $&3&17&7& & \\
$1 | 2 3,  1 4 | 2,  1 4 | 3,  2 3 | 4 $&2&19&7& & \\

$1 | 2 | 3,  1 | 2 3 4 $&2&14&7& & \\

$1 | 2 | 3,  1 | 2 4,  2 4 | 3 $&3&17&7& & \\
$1 | 2 | 3,  1 | 2 | 4,  1 | 3 4 $&2&15&7& & \\

$1 | 2,  1 | 3 | 4,  1 2 3 | 4,  2 | 3 | 4 $&3&23&6& & \\
$1 2 | 3 4 $&2&20&6& x & \\
$1 | 2 | 3,  1 | 2 | 4,  1 | 3 4,  2 | 3 4 $&3&19&6& & \\
$1 | 2 | 3,  1 | 2 | 4,  1 | 2 3 4 $&2&18&6& & \\
$1 | 2 | 3,  1 | 2 | 4,  1 | 3 | 4 $&3&20&6& & x\\

$1 | 2 | 3 4 $&2&20&5& x & \\
$1 | 2 | 3,  1 | 2 | 4,  1 | 3 | 4,  2 | 3 | 4 $&3&23&5& & x\\
$1 | 2 | 3,  1 | 2 | 4,  1 | 2 3 4,  1 | 3 | 4 $&3&25&5& & \\
$1 | 2 | 3 | 4 $&2&24&4&x & x\\
\end{longtable}

\bibliography{bibMargInd.bib}{}

\begin{thebibliography}{10}

\bibitem{boege}
Tobias Boege, Sonja Petrov\'ic, and Bernd Sturmfels.
\newblock Marginal independence models.
\newblock {\em Proceedings of the 2022 International Symposium on Symbolic and
  Algebraic Computation}, 2021.

\bibitem{drton}
Mathias Drton and Thomas~S. Richardson.
\newblock Binary models for marginal independence.
\newblock {\em Journal of the Royal Statistical Society: Series B (Statistical
  Methodology)}, 70(2):287--309, 2008.

\bibitem{geiger}
Dan Geiger, Azaria Paz, and Judea Pearl.
\newblock Axioms and algorithms for inferences involving probabilistic
  independence.
\newblock {\em Information and Computation}, 91(1):128--141, 1991.

\bibitem{M2}
Daniel~R. Grayson and Michael~E. Stillman.
\newblock Macaulay2, a software system for research in algebraic geometry.
\newblock Available at \url{http://www2.macaulay2.com}.

\bibitem{unlabeledGr}
OEIS~Foundation Inc.
\newblock {Entry A000088 in The On-Line Encyclopedia of Integer Sequences}.
\newblock \url{https://oeis.org/A000088}, 2023.

\bibitem{unlabeledSC}
OEIS~Foundation Inc.
\newblock {Entry A261005 in The On-Line Encyclopedia of Integer Sequences}.
\newblock \url{https://oeis.org/A261005}, 2023.

\bibitem{labeledSC}
OEIS~Foundation Inc.
\newblock {Entry A307249 in The On-Line Encyclopedia of Integer Sequences}.
\newblock \url{https://oeis.org/A307249}, 2023.

\bibitem{lauritzen}
S.L. Lauritzen.
\newblock {\em Graphical Models}.
\newblock Oxford Statistical Science Series. Clarendon Press, 1996.

\bibitem{stanley}
Richard~P. Stanley.
\newblock {\em Enumerative Combinatorics}, volume~1 of {\em Cambridge Studies
  in Advanced Mathematics}.
\newblock Cambridge University Press, 2 edition, 2011.

\bibitem{sullivant}
Seth Sullivant.
\newblock {\em Algebraic statistics}, volume 194.
\newblock American Mathematical Soc., 2018.

\end{thebibliography}
\bibliographystyle{plain}

\end{document}